\newtcolorbox{ntcolorbox}[1][]{%
    breakable
}
\begin{document}
\newtheorem{thm}{Theorem}[section]
\newtheorem{lemma}[thm]{Lemma}
\newtheorem{cor}[thm]{Corollary}
\newtheorem{prop}[thm]{Proposition}
\newtheorem{defin}[thm]{Definition}
\newtheorem{example}[thm]{Example}
\newtheorem{calc}[thm]{Calculation}
\newtheorem{choice}[thm]{Choice}
\newtheorem{rmk}[thm]{Remark}
\newtheorem{conj}[thm]{Conjecture}
\newtheorem{ques}[thm]{Question}
\newtheorem{claim}[thm]{Claim}
\let\oldcalc\calc
\renewcommand{\calc}{\oldcalc\normalfont}
\let\oldexample\example
\renewcommand{\example}{\oldexample\normalfont}
\let\oldrmk\rmk
\renewcommand{\rmk}{\oldrmk\normalfont}
\numberwithin{equation}{subsection}

\newcommand{\bR}{\mathbb{R}}
\newcommand{\bT}{\mathbb{T}}

\newcommand\xqed[1]{%
  \leavevmode\unskip\penalty9999 \hbox{}\nobreak\hfill
  \quad\hbox{#1}}
\newcommand\demo{\xqed{$\circ$}}
\fancyfoot[C]{\thepage{}}
\title{Signature Reconstruction from Randomized Signatures}
\author{\textbf{Mie Glückstad} \\ Mathematical Institute \\ University of Oxford \\ \emph{mie.gluckstad@exeter.ox.ac.uk} \and
\textbf{Nicola Muça Cirone}\\ Department of Mathematics \\ Imperial College London \\ \emph{n.muca-cirone22@imperial.ac.uk}
\and
\textbf{Josef Teichmann}\\ Department of Mathematics \\ ETH Zürich \\ \emph{jteichma@math.ethz.ch}}
\date{}
\maketitle
\begin{abstract}
\noindent Controlled ordinary differential equations driven by continuous bounded variation curves can be considered a continuous time analogue of recurrent neural networks for the construction of expressive features of the input curves. We ask up to which extent well known signature features of such curves can be reconstructed from controlled ordinary differential equations with (untrained) random vector fields. The answer turns out to be algebraically involved, but essentially the number of signature features, which can be reconstructed from the non-linear flow of the controlled ordinary differential equation, is exponential in its hidden dimension, when the vector fields are chosen to be neural with depth two. Moreover, we characterize a general linear independence condition on arbitrary vector fields, under which the signature features up to some fixed order can always be reconstructed. Algebraically speaking this complements in a quantitative manner several well known results from the theory of Lie algebras of vector fields and puts them in a context of machine learning.
\end{abstract}
\newpage
\tableofcontents
\newpage
\section{Introduction}
Controlled ordinary differential equation (CDEs)
$$
Y_t = y + \sum_{i=1}^d \int_0^t V_i(Y_s) d X^i_s
$$
driven by a continuous bounded variation input path $X: [0,T] \to \mathbb{R}^d $ are a continuous time analogue of recurrent neural networks. Here $V_i: \mathbb{R}^N \to \mathbb{R}^N$, $i=1,\ldots,d$ are smooth vector fields with globally bounded derivatives on $\mathbb{R}^N$. $N$ is referred to as the \emph{hidden dimension}. The vector fields correspond to the (recurrent) cell of the network. Each component of $Y$ is called a feature of $X$. In machine learning applications the regressions on features are performed to approximate non-linear functionals of $X$ up to time $T$. \\

It is an important question whether solutions of a fixed controlled differential equation at time $T$ provide, when one varies over all intial values $Y_0 = y$, an exhaustive set of (non-linear) features of $X$, where exhaustive refers to density properties in spaces of path space functionals. A more specific question is whether one can approximate by linear combinations of solutions $Y_T$ for different intial values of a given controlled differential equation, the well known signature features of $X$ (at time $T$), which are dense in multiple senses (see, e.g., the recent work \cite{CucSchTei:23}).\\


In \cite{AkyildirimTeichmann} it has been conjectured that the signature features, see Subsection \ref{subs:sigs}, of a bounded variation path $X: [0,T] \rightarrow \mathbb{R}^d$ can almost surely be reconstructed from its randomized signature, i.e., from the collection of solutions $(Y_T^y)_{y \in \mathbb{R}^N}$ at time $T$ to the controlled differential equation with randomly chosen vector fields of neural network type
$$ 
Y_t := y + \sum_{i=1}^d\int_0^t \sigma(A_i Y_s + b_i) dX_s^i \quad \text{ for } t \in [0,T],
$$
considered across all different initial values $y \in \mathbb{R}^N$. Here $A_i \in \mathbb{R}^{N \times N}$ and $b_i \in \mathbb{R}^N$ are samples of random matrices, and $\sigma$ is an activation function applied entry-wise. 
\\
Our original aim was to give a detailed proof of this conjecture, based on a sketch from \cite{AkyildirimTeichmann}, which suggested using a Taylor expansion to explicitly relate the signature components to the solutions of the randomized signature (see Section \ref{subsec:TaylorExp}). The sketch, however, relied on a crucial assumption of linear independence between the iterated vector fields generated by $V_i := \sigma(A_i \cdot + b_i)$ for $i \in \{1, \dots, d\}$, and this question of linear independence turned out to be algebraically richer and more delicate, than we had originally anticipated. In particular it is only true up to a certain depth of iterations but in general wrong (on finite dimensional input spaces). We also refer here to important algebraic or geometric work by \cite{Bah:21} and in particular \cite{Mol:87}, where natural identities of Lie polynomials are described and derived. Our work provides an alternative, more quantitative perspective on these seminal results. \\


In Section \ref{sec:linindep} we study the structure of the iterated vector fields for general choices of vector fields $V_1, \dots, V_d: \mathbb{R}^N \rightarrow \mathbb{R}^N$, which need not be of the 'randomized signature form', and observe that they can be represented graphically as vertex- and edge-labelled planar recursive trees, inspired by the similar tree-like representations from \cite{mclachlan2017butcherseriesstoryrooted} and \cite{GUBINELLI}. Indeed, the question of linear independence of the iterated vector fields, boils down to a question of linear independence between the associated \emph{tree-like vector fields}, as defined in Section \ref{subsec:TreeVF}.\\

We illustrate by an example in Remark \ref{rmk:Dimensions}, that having linear independence between the iterated vector fields up to order $m$ may be entirely impossible (no matter the choice of vector fields) if the dimension $N$ of the input space is not chosen to be sufficiently large compared to $m$. We also see in Remark \ref{rmk:NoDepth} that the tree-like vector fields are, perhaps surprisingly, \emph{not} linearly independent, when $V_1, \dots, V_d$ are chosen to be of the classical randomized signature form.\\

In order to break the algebraic relations which arise between the tree-like vector fields in the randomized signature case, we add \emph{depth} to the randomized signature (see also Section \ref{subsec:RanSig}). Namely, we show that when $N \geq m-1$, choosing $V_1, \dots, V_d: \mathbb{R}^N \rightarrow \mathbb{R}^N$ to be randomized signatures of \emph{depth two} with exponential activation function, we are able to obtain linear independence of the tree-like vector fields up to order $m$. In Section \ref{sec:SigRec} we outline the signature reconstruction results, inspired by the conjecture from \cite{AkyildirimTeichmann}, and see that it is possible to reconstruct the signature components up to order $m$ if linear independence between the tree-like vector fields up to order $m$ is satisfied. This, of course, in particular implies that we are able to reconstruct the signature components up to order $m$, for the randomized signature with depth two and exponential activation when $N \geq m-1$. We emphasize here that since the number of signature components of order $m$ is $d^m$, the bound $N \geq m-1$ is indeed logarithmically small in terms of the number of reconstructed siganture components (when $d \geq 2$). In other words: as the hidden dimension $N$ increases linearly, the number of signature components which can be reconstructed by this method increases exponentially.\\

In Section \ref{subsec:sigreconstructLie} we briefly discuss modifications of the signature reconstruction result to the case where the vector fields $V_1, \dots, V_d: G \rightarrow TG$ are defined on some finite-dimensional Lie group. Proving the linear independence results in this case should follow by analogous arguments to what we have done in Section \ref{sec:linindep}, and provides the added benefit of having the solutions of the differential equations contained on a (possibly compact) Lie group.
\section{Setting}
\subsection{Signatures and rough paths}
\label{subs:sigs}

The study of signatures originates from the theory of rough paths, see \cite{LyonsRCDE}, or the more introductory resources \cite{Allan}, \cite{LyonsRoughDE} \cite{FrizHairer} for an overview of this. 
Its reconstruction is of particular interest since by the classical Stone-Weierstrass theorem, linear functionals are dense in the space of continuous real-valued functions defined on compact sets of un-parameterized paths. 
This density property makes signature coefficients well-suited as feature representations for machine learning tasks involving sequential data \cite{fermanian2023new, cass2024lecture}, and as a consequence related techniques have been implemented across diverse domains. Their applications span deep learning \cite{kidger2019deep,  cirone2024theoretical, issa2024non, barancikova2024sigdiffusions}, kernel methods \cite{salvi2021signature, lemercier2021distribution}, and quantitative finance \cite{AkyildirimTeichmann, arribas2020sigsdes, horvath2023optimal, pannier2024path, cirone2025rough}. 
\\
\\
Let $X: [0,T] \rightarrow \mathbb{R}^d$ be a Lipschitz continuous path and consider its real-valued coordinate functions $X^i: [0,T] \rightarrow \mathbb{R}$. The study of signatures deals with the iterated integals of these coordinate functions. Let $w_1, \dots, w_n \in \{1, \dots, d\}$ be indices and consider the corresponding \emph{word} $w= (w_1, \dots, w_n)$ given through the associated $n$-tuple. Then the iterated integral of $X$ associated to the word $w$ over the subinterval $[s,t] \subseteq [0,T]$ is given by
$$ \int_{\Delta_{[s,t]}^n} dX_r^w := \int_s^t \int_s^{r_n} \cdots \int_s^{r_3} \int_s^{r_2} dX_{r_1}^{w_1} dX_{r_2}^{w_2} \cdots dX_{r_n}^{w_n}, $$
where we denote the set of possible $n$-step interval partitions by
$$ \Delta_{[s,t]}^n :=  \{ s \leq r_1 \leq r_2 \leq \dots \leq r_n \leq t \} \quad \textrm{for each } n \in \mathbb{N}. $$
We denote by $\mathscr{W}_n := \{ w = (w_1, \dots, w_n) ~|~ w_1, \dots, w_n \in \{1, \dots, d\}\}$ the collection of words of a fixed length $n$, and set for each $w = (w_1, \dots, w_n) \in \mathscr{W}_n$
$$ e_w := e_{w_1} \otimes e_{w_2} \otimes \dots \otimes e_{w_n}, $$
where $e_{w_i}$ is the standard basis vector in $\mathbb{R}^d$ in direction $w_i$ for each $i \in \{1, \dots, n\}$. In this case, we can consider the collection of all $n$'th order iterated integrals as an element in the $n$'th order tensor space $(\mathbb{R}^d)^{\otimes n}$, given by
$$ \int_{\Delta_{[s,t]}^n} dX_{r_1} \otimes \cdots \otimes dX_{r_n} = \sum_{w \in \mathscr{W}_n} \int_{\Delta_{[s,t]}^n} dX_r^w~ e_w, $$
where we have considered the canonical tensor product on $\mathbb{R}^d$. The collection of all finite order iterated integrals of $X$ is referred to as the \emph{signature} of $X$ (with the convention that the zero'th order iterated integral is just 1). 
\begin{defin}[Signature]
The signature $S(X)$ of the path $X$ is a two-parameter function $S(X): \Delta_{[0,T]}^2 \rightarrow T((\mathbb{R}^d))$ given by
$$ S(X)_{s,t} := 1 + \sum_{n=1}^{\infty} \int_{\Delta_{[s,t]}^n} dX_{r_1} \otimes \dots \otimes dX_{r_n}, $$
for every $(s,t) \in \Delta_{[0,T]}^2$, where $T((\mathbb{R}^d))$ denotes the extended tensor algebra on $\mathbb{R}^d$. We refer to $\int_{\Delta_{[s,t]}^n} dX_{r_1} \otimes \cdots \otimes dX_{r_n}$ as the \emph{$n$'th order signature component of $X$}.
\end{defin}
Here, the extended tensor algebra $T((\mathbb{R}^d))$ should not be confused with the usual tensor algebra $T(\mathbb{R}^d)$ which is the countable direct sum $ T(\mathbb{R}^d) = \bigoplus_{n=0}^{\infty} (\mathbb{R}^d)^{\otimes n} = \mathbb{R} \oplus \mathbb{R}^d \oplus (\mathbb{R}^d)^{\otimes 2} \oplus (\mathbb{R}^d)^{\otimes 3} \oplus \dots$. Instead, the \emph{extended} tensor algebra can be considered as the closure of $T(\mathbb{R}^d)$, and we can in particular express it by
$$ T((\mathbb{R}^d)) := \left\{ (x_0, x_1, x_2, \dots) = \sum_{n=0}^{\infty} x_n ~\Big{|}~ \forall n \in \mathbb{N}_0: ~ x_n \in (\mathbb{R}^d)^{\otimes n} \right\}.$$
The most common setting is that of level two rough paths, in which a meaningful notion of integration is introduced for paths of Hölder-regularity $\alpha \in \left( \frac{1}{3}, \frac{1}{2} \right]$ to which both the Riemann-Stieltjes and Young integration theories do not apply. This is done by equipping these paths with an extra piece of information, the \emph{enhancement}, mimicking the behaviour of a second order iterated integral.
More precisely, the idea goes as follows: consider first a path $X$ of Hölder-regularity $\alpha \in \left( \frac{1}{2}, 1 \right]$, an appropriate twice continuously differentiable function $f$ and a sequence of partitions
$$ \pi^n = \{ s = r_0^n < r_1^n < \dots < r_{N_n}^n = t \} $$
with mesh $|\pi^n| \rightarrow 0$. Applying a second order Taylor expansion to $f$ and using the usual approximation results for Young integrals, it is seen that
\begin{equation}
\label{eq:roughpathidea}
 \int_s^t f(X_r) dX_r = \lim_{n \rightarrow \infty} \sum_{i=0}^{N_n - 1} f(X_{r_i^n})(X_{r_{i+1}^n} - X_{r_i^n}) + Df(X_{r_i^n})\int_{r_i^n}^{r_{i+1}^n} \int_{r_i^n}^u dX_u \otimes dX_r.
\end{equation}
This gives us an expression for the Young integral which does not just depend on the usual approximation terms in the first part of the sum, but also on terms which are expressed through the second order iterated integral of $X$.\\
When the path $X$ is of lower Hölder regularity ($\alpha \in \left( \frac{1}{3}, \frac{1}{2} \right]$) and the usual approximation of the integral via the sum $\sum_{i=0}^{N_n - 1} f(X_{r_i^n}) (X_{r_{i+1}^n} - X_{r_i^n})$ does not work, the idea is then to equip $X$ with an enhancement $\mathbb{X}$ (setting $\mathbf{X} = (X, \mathbb{X})$), and letting $\mathbb{X}$ play the role of the second order iterated integral in \eqref{eq:roughpathidea}. This is done, first of all, by ensuring that $\mathbb{X}$ behaves similarly to a second order iterated integral (through \emph{Chen's relation}), and secondly by imposing sufficient regularity conditions on $\mathbb{X}$ to ensure that the limit $\lim_{n \rightarrow \infty} \sum_{i=0}^{N_n - 1} f(X_{r_i^n})(X_{r_{i+1}^n} - X_{r_i^n}) + Df(X_{r_i^n}) \mathbb{X}_{r_i^n, r_{i+1}^n}$ exists and is well-defined, in which case this limit is referred to as a \emph{rough integral} and is denoted by $\int_s^t f(X_r) d\mathbf{X}_r$.\\ 

For paths of even lower Hölder regularity, $\alpha \in \left( 0, \frac{1}{3} \right]$, a similar approach can be implemented, this time using a higher order Taylor expansion and thus equipping the path $X$ with higher order enhancements, say, $\mathbb{X}^2, \dots, \mathbb{X}^n$ (where $\mathbb{X}^2$ is the second order enhancement from before), with each $\mathbb{X}^i$ mimicking the behaviour of the $i$'th order iterated integral $\int_{\Delta_{[\cdot, \cdot]}^i} dX_{r_1} \otimes \cdots \otimes dX_{r_i} $. In this case, the level $n$ rough path will precisely be the collection $\mathbf{X} = (X, \mathbb{X}^2, \dots, \mathbb{X}^n)$, mimicking the signature components up to order $n$.\\

 From the initial study of rough paths, the relationship between a path and its signature has been further explored, and it has been shown that many of the geometric properties of a path $X$ can in most cases be recovered from its signature $S(X)$. In some sense, the signature can be seen to \emph{encode} the information of the full underlying path $X$, when evaluated in its terminal value, up to pieces of $X$ "backtracking" onto themselves \cite{HamblyLyons}. It is also shown in \cite{LyonsInversion} that if $X: [0,T] \rightarrow \mathbb{R}^d$ is a $\mathcal{C}^1$-path, then the path $X$ can approximately be reconstructed from the signature $S(X)_{0,T}$ by an inversion method - and it is possible to explicitly quantify the error of this approximation. Inversion of signatures is also studied in \cite{chang2019insertion} and \cite{lyons2017hyperbolic}. Results such as these suggest that it will often be useful to study the signature $S(X)_{0,T}$ if we want to understand the underlying path $X$, and indeed signature-based methods have gained traction for applications in various fields, as outlined earlier. For an introductory overview of some applications of signatures in machine learning, see also \cite{ChevyrevK}.
\subsection{Taylor expansions}
\label{subsec:TaylorExp}
\subsubsection{The $\mathbb{R}^N$-case}
Let $X: [0,T] \rightarrow \mathbb{R}^d$ be a Lipschitz continuous path, and let $V_1, \dots, V_d: \mathbb{R}^N \rightarrow \mathbb{R}^N$ be smooth vector fields. We consider for initial values $y \in \mathbb{R}^N$, controlled differential equations (CDEs) of the form
\begin{equation}
\label{eq:RNDE}
 Y_t = y + \sum_{i=1}^d \int_0^t V_i(Y_s) dX_s^i,
\end{equation}
and denote by $Y^y: [0,T] \rightarrow \mathbb{R}^N$ the unique solution associated to the initial value $y$ (supposing that the vector fields are chosen in a way such that a unique solution exists).\\

Let $g: \mathbb{R}^N \rightarrow \mathbb{R}$ be a smooth function, such that $g \in C^{\infty}(\mathbb{R}^N)$ with the notation from Appendix \ref{appendix:vectorfields}. By Proposition \ref{prop:mapXf}, we can then for each $i \in \{1, \dots, d\}$ apply $g$ to the vector field $V_i: \mathbb{R}^N \rightarrow \mathbb{R}^N$, in order to obtain a map $V_ig \in C^{\infty}(\mathbb{R}^N)$, which evaluates to:
$$ V_ig(y) = \underbrace{\nabla g(y)^T}_{1 \times n} \underbrace{V_i(y)}_{n \times 1} \quad \textrm{for every } y \in \mathbb{R}^N. $$
As $V_i g \in C^{\infty}(\mathbb{R}^N)$ we can again apply a vector field $V_j$ to this function, and this can be done repeatedly as many times as wanted. For any word $w=(w_1, \dots, w_k) \in \mathscr{W}_k$ and $g \in C^{\infty}(\mathbb{R}^N)$, we can thus construct a well-defined map $V_wg := V_{w_1} \cdots V_{w_k} g  \in C^{\infty}(\mathbb{R}^N)$ by this procedure. Indeed, $V_w: C^{\infty}(\mathbb{R}^N) \rightarrow C^{\infty}(\mathbb{R}^N)$ is seen to be the operator obtained by composing the derivations of the associated vector fields in the iteration, i.e. 
$$V_w = \mathscr{D}_{V_{w_1}} \circ \dots \circ \mathscr{D}_{V_{w_n}},$$
using the notation from Appendix \ref{appendix:vectorfields}.\\

Using a Picard iteration, e.g., as in \cite{BaudoinZhang}, we now obtain the following Taylor expansion.
\begin{thm}
\label{thm:BaudoinZhang}
Let $Y: [0,T] \rightarrow \mathbb{R}^N$ denote the unique solution to the differential equation \eqref{eq:RNDE} with initial value $y \in \mathbb{R}^N$, and let $g \in C^{\infty}(\mathbb{R}^N)$. Then:
\begin{align*} 
g(Y_t) = g(y) + \sum_{k=1}^{\infty} \sum_{w \in \mathscr{W}_k} V_w g(y)\int_{\Delta_{[0,t]}^k} dX_r^w \quad \textrm{for every } t \in [0,T].
\end{align*}
\end{thm}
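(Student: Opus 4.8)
The plan is to establish the expansion by Picard iteration on the CDE \eqref{eq:RNDE}, tracking the effect on the test function $g$ at each stage. First I would set up the Picard scheme: define $Y^{(0)}_t := y$ and $Y^{(n+1)}_t := y + \sum_{i=1}^d \int_0^t V_i(Y^{(n)}_s)\,dX_s^i$, and recall that under the standing smoothness and boundedness hypotheses on the $V_i$ (globally bounded derivatives, $X$ Lipschitz hence of bounded variation), the iterates $Y^{(n)}$ converge uniformly on $[0,T]$ to the unique solution $Y$. The key identity to iterate is the first-order Taylor/fundamental-theorem-of-calculus step: for any $h \in C^\infty(\mathbb{R}^N)$,
\[
 h(Y_t) = h(y) + \sum_{i=1}^d \int_0^t V_i h(Y_s)\,dX_s^i,
\]
which follows from the chain rule applied along the solution, using that $V_i h(y) = \nabla h(y)^T V_i(y)$ is exactly the derivation $\mathscr{D}_{V_i}$ acting on $h$ as recalled before the statement.

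Next I would expand recursively. Applying the identity once to $h = g$ gives $g(Y_t) = g(y) + \sum_{w \in \mathscr{W}_1} V_w g(Y_\cdot)$ integrated appropriately; applying it again to each integrand $V_i g(Y_s)$ produces the order-two terms $\sum_{w \in \mathscr{W}_2} V_w g(y) \int_{\Delta^2_{[0,t]}} dX_r^w$ plus a remainder integral whose integrand is $V_w g(Y_\cdot)$ for $w \in \mathscr{W}_2$. Iterating $m$ times yields
\[
 g(Y_t) = g(y) + \sum_{k=1}^{m} \sum_{w \in \mathscr{W}_k} V_w g(y) \int_{\Delta^k_{[0,t]}} dX_r^w + R_m(t),
\]
where $R_m(t) = \sum_{w \in \mathscr{W}_{m}} \int_{\Delta^m_{[0,t]}} \bigl(V_w g(Y_{r_1}) - V_w g(y)\bigr)\,dX_r^w$ — or, slightly more cleanly, $R_m(t) = \sum_{w \in \mathscr{W}_{m+1}} \int_{\Delta^{m+1}_{[0,t]}} V_w g(Y_{r_1})\,dX_r^w$ after one further expansion. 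The combinatorial bookkeeping here is the claim that each round of substitution turns a sum over $\mathscr{W}_k$ into a sum over $\mathscr{W}_{k+1}$ with the simplex $\Delta^k$ refined to $\Delta^{k+1}$, which is immediate from the definition of $V_w = \mathscr{D}_{V_{w_1}} \circ \cdots \circ \mathscr{D}_{V_{w_k}}$ and the nesting of iterated integrals.

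Finally I would show $R_m(t) \to 0$ as $m \to \infty$, uniformly in $t \in [0,T]$. Here the main obstacle is purely quantitative: one needs a bound of the form $\sup_{y} \|V_w g\|_\infty$ (or on the relevant compact set containing the solution trace) growing no faster than $C^k k!$ or so in $k = |w|$, together with the factorial decay $\bigl|\int_{\Delta^k_{[0,t]}} dX_r^w\bigr| \le \|X\|_{1\text{-var};[0,t]}^k / k!$ of the iterated integrals against a bounded-variation driver. Combined with the $d^k$ count of words of length $k$, the remainder is dominated by $\sum_{|w|=m} \|V_w g\|_\infty \cdot \|X\|_{1\text{-var}}^m/m! \to 0$ provided the vector-field derivative bounds are controlled — which is exactly where the hypothesis of globally bounded derivatives (or the a priori bound on the solution) is used, and which is the step one should cite from \cite{BaudoinZhang} rather than re-derive in full. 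Absolute convergence of the resulting series, and hence the stated identity, follows from the same estimate.
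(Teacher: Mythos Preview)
Your approach is essentially the same as the paper's: the paper does not give a detailed proof but simply states that the expansion follows from a Picard iteration as in \cite{BaudoinZhang}, which is exactly the scheme you outline (chain-rule identity, iterate, control the remainder). One point worth flagging: the growth bound you assert for $\sup \|V_w g\|$---``no faster than $C^k k!$''---is precisely the delicate step, since the product rule in each iteration spawns combinatorially many terms and in general one only gets a finite radius of convergence; the paper acknowledges this by deferring to \cite{BaudoinZhang} for the convergence discussion and by noting in Remark~\ref{rmk:TaylorConv} that convergence is in fact not needed for the downstream results.
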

\begin{rmk}
\label{rmk:TaylorConv}
Convergence of the Taylor expansion in Theorem \ref{thm:BaudoinZhangLie} is not needed for our results in Section \ref{subsec:sigreconstruct}, as any remainder terms from the Picard iteration disappear when taking derivatives and evaluating in $r =0$ under our reparametrization method proposed in Section \ref{subsec:sigreconstruct}. As such, we will refrain from going into a lengthy discussion of any such convergence. For a discussion of the convergence radius of the Taylor expansion, see for instance \cite{BaudoinZhang}.
\demo
\end{rmk}
\subsubsection{The Lie group case}
A similar construction can be made when considering smooth vector fields $V_1, \dots, V_d: G \rightarrow TG$ on a finite-dimensional Lie group $G$. In this case, we consider for initial values $z \in G$, CDEs of the form
\begin{equation}
\label{eq:LieDE} 
Z_t = z + \sum_{i=1}^d \int_0^t V_i(Z_s) dX_s^i \quad \textrm{for every } t \in [0,T].
\end{equation}
If the vector fields are chosen appropriately, such that a unique solution exists, it is well-known that this solution takes value on the Lie group $G$. We denote by $Z^z: [0,T] \rightarrow G$ the unique solution associated to the initial value $z$. When $G$ is a matrix Lie group, and the vector fields $V_1, \dots, V_d$ are chosen to be linear maps, the solutions $(Z^z)_{z \in G}$ are referred to as \emph{path-developments} on the Lie group $G$, see for instance \cite{Lou}.\\

As previously, we can apply the vector fields $V_1, \dots, V_d$ repeatedly to any function $g \in C^{\infty}(G)$ using the construction from Proposition \ref{prop:mapXf}. This yields, following a Picard iteration similar to the one in \cite{BaudoinZhang}, a Taylor expansion as in the $\mathbb{R}^N$-case.
\begin{thm}
\label{thm:BaudoinZhangLie}
Let $Z: [0,T] \rightarrow G$ denote the unique solution to the differential equation \eqref{eq:LieDE} with initial value $z \in G$. Then it holds for any $g \in C^{\infty}(G)$ that
$$ Z_t = z + \sum_{k=1}^{\infty} \sum_{w \in \mathscr{W}_k} V_w g (z) \int_{\Delta_{[0,t]}^k} dX_r^w \quad \textrm{for every } t \in [0,T]. $$
\end{thm}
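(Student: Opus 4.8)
The plan is to carry out, on the manifold $G$, the same Picard-iteration argument that underlies Theorem~\ref{thm:BaudoinZhang}; as announced just before the statement, it transports essentially verbatim once one works in local coordinates. It is cleanest to prove the intrinsic reformulation
$$ g(Z_t)=g(z)+\sum_{k=1}^{\infty}\sum_{w\in\mathscr{W}_k}V_wg(z)\int_{\Delta_{[0,t]}^k}dX_r^w, $$
which is the literal analogue of Theorem~\ref{thm:BaudoinZhang}; the displayed form of the theorem is then recovered by reading this identity componentwise, with $g$ ranging over the coordinate functions of $Z$ under a fixed embedding $G\hookrightarrow\mathbb{R}^M$. (Alternatively one could reduce directly to Theorem~\ref{thm:BaudoinZhang} by embedding $G$ as a closed submanifold of some $\mathbb{R}^M$, extending $V_1,\dots,V_d$ and $g$ to smooth maps on a tubular neighbourhood with globally bounded derivatives via a cutoff around the compact trajectory $Z([0,T])$, and observing that $G$ is invariant for the extended CDE so that its solution from $z\in G$ is exactly $Z$; I would nonetheless give the self-contained iteration, since Section~\ref{subsec:sigreconstructLie} refers back to it.)

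The engine is the first-order identity: for any $h\in C^\infty(G)$, the chain rule applied to $s\mapsto h(Z_s)$, together with \eqref{eq:LieDE} and the definition of the derivation $\mathscr{D}_{V_i}$ from Appendix~\ref{appendix:vectorfields} (so that $V_ih=\mathscr{D}_{V_i}h$), gives
$$ h(Z_t)=h(z)+\sum_{i=1}^d\int_0^t (V_ih)(Z_s)\,dX_s^i,\qquad t\in[0,T]. $$
By Proposition~\ref{prop:mapXf} each $V_ih$ again lies in $C^\infty(G)$, so this may be re-applied to the integrands. Substituting it first with $h=g$, then with $h=V_{w_1}g$ inside the integral, and iterating $k$ times yields
$$ g(Z_t)=g(z)+\sum_{j=1}^{k}\sum_{w\in\mathscr{W}_j}V_wg(z)\int_{\Delta_{[0,t]}^j}dX_r^w+R_k(t),\qquad R_k(t):=\sum_{w\in\mathscr{W}_{k+1}}\int_{\Delta_{[0,t]}^{k+1}}(V_wg)(Z_{r_1})\,dX_r^w, $$
with words and simplices indexed as in Subsection~\ref{subs:sigs} (the innermost variable $r_1$ carries the letter $w_1$, which is the outermost derivation in $V_w=\mathscr{D}_{V_{w_1}}\circ\cdots\circ\mathscr{D}_{V_{w_{k+1}}}$). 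This is precisely the Picard scheme of \cite{BaudoinZhang} written on $G$; a routine induction confirms the exact form of the coefficients and of $R_k$.

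The only genuinely analytic point — and the step I expect to be the main obstacle — is sending $k\to\infty$. Since $Z([0,T])$ is compact, fix a relatively compact neighbourhood $K\supseteq Z([0,T])$ in $G$; the standard simplex estimate $\big|\int_{\Delta_{[0,t]}^n}dX_r^w\big|\le\big(\operatorname{Var}_{[0,T]}X\big)^n/n!$ then gives $|R_k(t)|\le\big(\max_{|w|=k+1}\sup_K|V_wg|\big)\cdot C^{k+1}/(k+1)!$ with $C$ depending only on $d$ and the total variation of $X$. Convergence thus reduces to bounding the growth in $k$ of the iterated derivatives $\sup_K|V_wg|$ over length-$k$ words; this is exactly the delicate estimate treated for the $\mathbb{R}^N$-case in \cite{BaudoinZhang}, where, depending on the regularity of the data, one obtains in general only convergence on an interval of positive length (a finite convergence radius in the sense of that reference), with the full series holding globally under stronger hypotheses (e.g.\ real-analytic $V_i$ and $g$, or small total variation). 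As recorded in Remark~\ref{rmk:TaylorConv}, this does not affect the sequel: the reparametrization arguments of Section~\ref{subsec:sigreconstruct} use only the truncated identity above together with the explicit remainder $R_k$, for which the Picard step alone is enough.
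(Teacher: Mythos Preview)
Your proposal is correct and follows exactly the approach the paper indicates: the paper does not spell out a proof of Theorem~\ref{thm:BaudoinZhangLie} at all, only stating that it ``yields, following a Picard iteration similar to the one in \cite{BaudoinZhang}, a Taylor expansion as in the $\mathbb{R}^N$-case,'' and you have written out precisely that iteration. Your observation that the displayed formula should read $g(Z_t)=g(z)+\cdots$ rather than $Z_t=z+\cdots$ is also correct (this is a typo in the paper), and your treatment of the convergence issue is in line with the paper's own Remark following the theorem.
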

\begin{rmk}
    As in the $\mathbb{R}^N$-case, we have here slightly abused notation without taking into account the convergence of the Taylor expansion. As in the $\mathbb{R}^N$-case, our approach in Section \ref{subsec:sigreconstructLie} does not require the Taylor expansion to be convergent.
    \demo
\end{rmk}
\subsection{Randomized signatures with depth}
\label{subsec:RanSig}
Let $X: [0,T] \rightarrow \mathbb{R}^d$ be a Lipschitz continuous path, and let $\sigma: \mathbb{R} \rightarrow \mathbb{R}$ be a real analytic function with infinite radius of convergence, different from the null-map (see e.g., \cite{Krantz} for a detailed discussion of real analytic functions). We apply $\sigma$ component-wise to vectors, such that
$$ \sigma(x_1, \dots, x_N) = \left( \sigma(x_1), \dots, \sigma(x_N)\right) \quad \textrm{for every } x = (x_1, \dots, x_N) \in \mathbb{R}^N.$$
Suppose that $A_1, \dots, A_d$ are random matrices in $\mathbb{R}^{N \times N}$ and $b_1, \dots, b_d$ are random vectors in $\mathbb{R}^N$, generated in a way such that all entries are i.i.d. with distribution absolutely continuous with respect to the Lebesgue measure. Define for each $i \in \{1, \dots, d\}$, the random neural vector field $V_i: \mathbb{R}^N \rightarrow \mathbb{R}^N$ by:
    $$ V_i(x) = \sigma(A_i x + b_i) \quad \text{for every } x \in \mathbb{R}^N. $$
Consider for each $y \in \mathbb{R}^N$ the random differential equation from \eqref{eq:RNDE}:
$$ Y_t^y = y + \sum_{i=1}^d \int_0^t \sigma(A_i Y_s + b_i) ~dX_s^i \quad \text{for } t \in [0,T].$$
Then we call the collection of solutions $(Y^y)_{y \in \mathbb{R}^N}$ a \emph{randomized signature of $X$}. Randomized signatures were originally introduced in \cite{CuchieroTeichmann1} and \cite{CuchieroTeichmann2}, and have later been studied and applied in works such as \cite{AkyildirimTeichmann}, \cite{Gonon24}, \cite{CompagnoniTeichmann}, \cite{MucaCirone23}, and \cite{Gruber23}. \\

For our purposes here, the shift-vectors $b_1, \dots, b_d$ will not be of much use, and so we ignore them. Inspired by neural activation functions, we can of course add depth to the vector fields $V_1, \dots, V_d$. Let $\sigma$ and $A_1, \dots, A_d$ be given as before, and let $D_1, \dots, D_d$ be random \emph{diagonal} matrices in $\mathbb{R}^{N \times N}$ with i.i.d. entries having distribution absolutely continuous with respect to the Lebesgue measure. Then we can for each $i \in \{1, \dots, d\}$, associate a random \emph{depth two} neural vector field $V_i: \mathbb{R}^N \rightarrow \mathbb{R}^N$ given by:
\begin{equation}
\label{eq:VFDepth}
    V_i(x) = \sigma(A_i \sigma(D_ix)) \quad \text{for every } x \in \mathbb{R}^N.
\end{equation}
Considering as before the random differential equations
$$ Y_t^y =  y + \sum_{i=1}^d \int_0^t \sigma(A_i\sigma(D_iY_s)) ~dX_s^i \quad \text{for } t \in [0,T],$$
we refer to the collection of solutions $(Y^y)_{y \in \mathbb{R}^N}$ as a randomized signature of $X$ with depth two, or simply as a \emph{randomized signature with depth}.

\newpage
\section{Linear independence of iterated vector fields on $\mathbb{R}^N$}
\label{sec:linindep}

From now on consider $d$ fixed smooth vector fields $\{V_i : \bR^N \to \bR^N ~|~ i=1,\dots,d ~\}$, and let $\mathscr{W}$ be the set of words in the alphabet $\{1,\dots,d\}$. Our main results will rely on utilizing the Taylor expansions from Section \ref{subsec:TaylorExp} to relate the collection of solutions $(Y^y)_{y \in \mathbb{R}^N}$ of a CDE \eqref{eq:RNDE} to the iterated integrals (i.e., signature components) of the underlying path $X$. In order to do this, we will need to study the iterated vector fields $V_w: C^{\infty}(\mathbb{R}^N) \rightarrow C^{\infty}(\mathbb{R}^N)$ which arise in the Taylor expansions. In particular, we will want to show that these operators are \emph{linearly independent} for words of the same length. We note of course, that the iterated vector fields can be defined recursively on word length, by setting for $i \in \{1, \dots, d\}$, $x \in \mathbb{R}^N$, $w \in \mathscr{W}$, and $\phi \in C^{\infty}(\bR^N)$:
        \begin{gather*}
            V_{i}\phi(x) = (\nabla \phi(x))^T V_i(x), \quad 
            V_{wi}\phi(x) := V_{i}(V_w\phi)(x)= (\nabla (V_w\phi)(x))^T V_i(x),
        \end{gather*}
        with the convention that in the one-letter case (i.e., $w=i$ for some $i$), $V_i$ may refer to either a vector field $\mathbb{R}^N \rightarrow \mathbb{R}^N$ or an operator $C^{\infty}(\mathbb{R}^N) \rightarrow C^{\infty}(\mathbb{R}^N)$, depending on the context.\\

As it is clear from an attempt at writing the explicit formulation of $V_{w}$ where $|w| \geq 3$, the form of the $V_{w}$ quickly spirals out of control. 
Fortunately such terms present a rich combinatorial structure which can be leveraged to effectively handle the constituent parts.
The natural language for this is that of \emph{trees}, as is customary in the differential equations literature under the guise of \emph{Butcher} expansions \cite{mclachlan2017butcherseriesstoryrooted}, or in the related Branched Rough Paths of \cite{GUBINELLI}.
\subsection{Tree-like vector fields}
\label{subsec:TreeVF}
Start by recalling the notion of a \emph{labeled recursive tree of order $m$}: this is a connected graph $\tau$ with vertex set $\{1, \dots, m\}$, and precisely $m-1$ edges, which is built recursively by letting vertex 1 denote the root, and adding in step $l+1$ the vertex labeled $l+1$ to the existing tree by connecting it with an edge to one of the vertices $\{1, \dots, l\}$. In addition to the natural labeling of a recursive tree, we will add extra vertex labels corresponding to letters from the alphabet $\{1, \dots, d\}$. Given a recursive tree of order $m$, we will typically just highlight these letter-labels by denoting them by $(w_1, \dots, w_m) \in \mathscr{W}_m$ such that the letter $w_l$ is the label of vertex $l$ for each $l \in \{1, \dots, m\}$. We denote by $\bT$ be the set of letter-labeled recursive trees, and note that these are {planar} graphs meaning that the order of the branches at any vertex is ignored (\emph{cf.} Figure \ref{fig:planar_tree_ex}).
\begin{figure}[h!]
        \centering
        \input{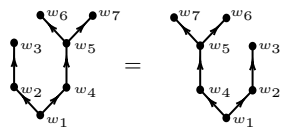}
        \caption{Example of a planar rooted tree in $\bT$ with vertices $\{1, \dots, 7\}$ labeled by the word $w=(w_1, \dots, w_7) \in \mathscr{W}_7$.}
        \label{fig:planar_tree_ex}
\end{figure}
\ \\
Given trees $\tau_1,\dots,\tau_k \in \bT$, let $[\tau_1 \cdots \tau_k]_{\bullet_i}$ be the tree obtained by attaching the roots of $\tau_1,\dots,\tau_k$ to a new vertex $\bullet_i$; then every $\tau \in \bT$ can be constructed by starting with its leaves and recursively using the "graphical" operation $[\cdot]$ (only attaching vertices of higher index to lower index vertices, from the top down). For example the tree of Figure \ref{fig:planar_tree_ex} can be constructed as 
$[[\bullet_{w_3}]_{\bullet_{w_2}} ~ [[\bullet_{w_6} ~ \bullet_{w_7}]_{\bullet_{w_5}}]_{\bullet_{w_4}}]_{\bullet_{w_1}}$. For any given word $w \in \mathscr{W}$, we denote by $\mathbb{T}_w$ the collection of letter-labeled recursive trees of order $|w|$ with letter-labels given by the letters of the word $w$. Formally, we define it as follows.
\begin{defin}[Letter-labeled recursive trees]
    Define the following sets of recursive trees with vertices labeled by $\{0, 1, \dots, d\}$: 
    \begin{itemize}
        \item For $i \in \{1, \dots, d\}$ let $\bT_{i} := \{ \bullet_i \}$.
        \item For $i \in \{1, \dots, d\}$ and $w \in \mathscr{W}$ define $\bT_{wi}$ from $\bT_{w}$ by attaching, one by one, to the vertices of its trees a new edge having leaf $\bullet_i$ \emph{cf.} Figure \ref{fig:tree_spaces}.
    \end{itemize}
\end{defin}

\begin{figure}[h!]
        \centering
        \input{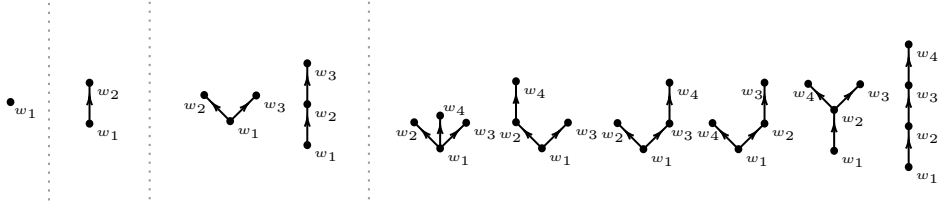}
        \caption{Left to right, the spaces $\bT_{w_1}$, $\bT_{w_1w_2}$, $\bT_{w_1w_2w_3}$, and $\bT_{w_1w_2w_3w_4}$. }
        \label{fig:tree_spaces}
\end{figure}
Denote by $\pi_1, \dots, \pi_N: \mathbb{R}^N \rightarrow \mathbb{R}$ the canonical coordinate projections, $\pi_i: x \mapsto x_i$.

\begin{defin}[Tree-like vector fields]
\label{def:TreeLikeVF}
    To each tree $\tau \in \bT$ we associate the vector field $V_{\tau}: \bR^N \to \bR^N$ defined recursively, for $i \in \{1, \dots, d\}$, and  $\tau_1, \dots, \tau_k \in \mathbb{T}$, as 
    \begin{align}
        V_{\bullet_i}(x) &= V_i(x),
        \\ \label{eq:TreeLikeVF}
        V_{[\tau_1 \cdots \tau_k]_{\bullet_i}}(x) &= \sum_{j_1,\dots, j_k=1}^N \frac{\partial^k}{\partial x_{j_k} \cdots \partial x_{j_1}}V_i(x) \cdot \pi_{j_1}(V_{\tau_1}(x)) \cdots \pi_{j_k}(V_{\tau_k}(x))        .
    \end{align}

    For each fixed $m \in \mathbb{N}$ we call $\{ V_{\tau} ~|~ \tau \in \mathbb{T}_w, w \in \mathscr{W}_m \}$ the \emph{collection of tree-like vector fields of order $m$}. Note how $V_{[\tau_1 \cdots \tau_k]_{\bullet_i}}(x) = ( d^k V_i )_x [V_{\tau_1}(x), \cdots, V_{\tau_k}(x)]$.
\end{defin}
Letting $\tau$ be given as in Figure \ref{fig:planar_tree_ex}, we see that the associated vector field is:
\begin{align*}
    V_{\tau}(x) = \sum_{j_1, \dots, j_6=1}^N& \frac{\partial^2}{\partial x_{j_3} \partial x_{j_1}} V_{w_1}(x) \cdot \pi_{j_1}\left(\frac{\partial}{\partial x_{j_2}} V_{w_2}(x)\right) \cdot \pi_{j_2}(V_{w_3}(x)) \cdot \pi_{j_3}\left(\frac{\partial}{\partial x_{j_4}}V_{w_3}(x)\right)\\
    &\quad \cdot \pi_{j_4}\left(\frac{\partial^2}{\partial x_{j_5} \partial x_{j_6}} V_{w_5}(x) \right) \cdot \pi_{j_5}(V_{w_6}(x)) \cdot \pi_{j_6}(V_{w_7}(x))
\end{align*}
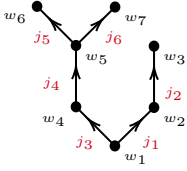
\begin{figure}[h!]
  \centering
  \tikzset{every picture/.style={line width=0.75pt}} 

\begin{tikzpicture}[x=0.75pt,y=0.75pt,yscale=-1.5,xscale=1.5]

\draw    (294.06,121.26) -- (307.01,134.38) ;
\draw [shift={(298.15,125.4)}, rotate = 45.37] [color={rgb, 255:red, 0; green, 0; blue, 0 }  ][line width=0.75]    (4.37,-1.32) .. controls (2.78,-0.56) and (1.32,-0.12) .. (0,0) .. controls (1.32,0.12) and (2.78,0.56) .. (4.37,1.32)   ;
\draw    (320.03,121.48) -- (307.01,134.38) ;
\draw [shift={(315.93,125.54)}, rotate = 135.25] [color={rgb, 255:red, 0; green, 0; blue, 0 }  ][line width=0.75]    (4.37,-1.32) .. controls (2.78,-0.56) and (1.32,-0.12) .. (0,0) .. controls (1.32,0.12) and (2.78,0.56) .. (4.37,1.32)   ;
\draw  [color={rgb, 255:red, 0; green, 0; blue, 0 }  ,draw opacity=1 ][fill={rgb, 255:red, 0; green, 0; blue, 0 }  ,fill opacity=1 ] (305.58,134.38) .. controls (305.58,133.54) and (306.22,132.85) .. (307.01,132.85) .. controls (307.8,132.85) and (308.44,133.54) .. (308.44,134.38) .. controls (308.44,135.23) and (307.8,135.91) .. (307.01,135.91) .. controls (306.22,135.91) and (305.58,135.23) .. (305.58,134.38) -- cycle ;
\draw  [color={rgb, 255:red, 0; green, 0; blue, 0 }  ,draw opacity=1 ][fill={rgb, 255:red, 0; green, 0; blue, 0 }  ,fill opacity=1 ] (318.59,121.48) .. controls (318.59,120.63) and (319.24,119.95) .. (320.03,119.95) .. controls (320.82,119.95) and (321.46,120.63) .. (321.46,121.48) .. controls (321.46,122.32) and (320.82,123.01) .. (320.03,123.01) .. controls (319.24,123.01) and (318.59,122.32) .. (318.59,121.48) -- cycle ;
\draw  [color={rgb, 255:red, 0; green, 0; blue, 0 }  ,draw opacity=1 ][fill={rgb, 255:red, 0; green, 0; blue, 0 }  ,fill opacity=1 ] (292.63,121.26) .. controls (292.63,120.41) and (293.27,119.73) .. (294.06,119.73) .. controls (294.85,119.73) and (295.49,120.41) .. (295.49,121.26) .. controls (295.49,122.1) and (294.85,122.79) .. (294.06,122.79) .. controls (293.27,122.79) and (292.63,122.1) .. (292.63,121.26) -- cycle ;
\draw    (320.03,100.92) -- (320.03,121.48) ;
\draw [shift={(320.03,107.8)}, rotate = 90] [color={rgb, 255:red, 0; green, 0; blue, 0 }  ][line width=0.75]    (4.37,-1.32) .. controls (2.78,-0.56) and (1.32,-0.12) .. (0,0) .. controls (1.32,0.12) and (2.78,0.56) .. (4.37,1.32)   ;
\draw  [color={rgb, 255:red, 0; green, 0; blue, 0 }  ,draw opacity=1 ][fill={rgb, 255:red, 0; green, 0; blue, 0 }  ,fill opacity=1 ] (318.59,100.92) .. controls (318.59,100.07) and (319.24,99.39) .. (320.03,99.39) .. controls (320.82,99.39) and (321.46,100.07) .. (321.46,100.92) .. controls (321.46,101.77) and (320.82,102.45) .. (320.03,102.45) .. controls (319.24,102.45) and (318.59,101.77) .. (318.59,100.92) -- cycle ;
\draw [color={rgb, 255:red, 0; green, 0; blue, 0 }  ,draw opacity=1 ]   (294.06,100.7) -- (294.06,121.26) ;
\draw [shift={(294.06,107.58)}, rotate = 90] [color={rgb, 255:red, 0; green, 0; blue, 0 }  ,draw opacity=1 ][line width=0.75]    (4.37,-1.32) .. controls (2.78,-0.56) and (1.32,-0.12) .. (0,0) .. controls (1.32,0.12) and (2.78,0.56) .. (4.37,1.32)   ;
\draw    (281.11,87.58) -- (294.06,100.7) ;
\draw [shift={(285.19,91.72)}, rotate = 45.37] [color={rgb, 255:red, 0; green, 0; blue, 0 }  ][line width=0.75]    (4.37,-1.32) .. controls (2.78,-0.56) and (1.32,-0.12) .. (0,0) .. controls (1.32,0.12) and (2.78,0.56) .. (4.37,1.32)   ;
\draw    (307.07,87.8) -- (294.06,100.7) ;
\draw [shift={(302.98,91.86)}, rotate = 135.25] [color={rgb, 255:red, 0; green, 0; blue, 0 }  ][line width=0.75]    (4.37,-1.32) .. controls (2.78,-0.56) and (1.32,-0.12) .. (0,0) .. controls (1.32,0.12) and (2.78,0.56) .. (4.37,1.32)   ;
\draw  [color={rgb, 255:red, 0; green, 0; blue, 0 }  ,draw opacity=1 ][fill={rgb, 255:red, 0; green, 0; blue, 0 }  ,fill opacity=1 ] (292.63,100.7) .. controls (292.63,99.86) and (293.27,99.17) .. (294.06,99.17) .. controls (294.85,99.17) and (295.49,99.86) .. (295.49,100.7) .. controls (295.49,101.55) and (294.85,102.23) .. (294.06,102.23) .. controls (293.27,102.23) and (292.63,101.55) .. (292.63,100.7) -- cycle ;
\draw  [color={rgb, 255:red, 0; green, 0; blue, 0 }  ,draw opacity=1 ][fill={rgb, 255:red, 0; green, 0; blue, 0 }  ,fill opacity=1 ] (305.64,87.8) .. controls (305.64,86.95) and (306.28,86.27) .. (307.07,86.27) .. controls (307.86,86.27) and (308.5,86.95) .. (308.5,87.8) .. controls (308.5,88.64) and (307.86,89.33) .. (307.07,89.33) .. controls (306.28,89.33) and (305.64,88.64) .. (305.64,87.8) -- cycle ;
\draw  [color={rgb, 255:red, 0; green, 0; blue, 0 }  ,draw opacity=1 ][fill={rgb, 255:red, 0; green, 0; blue, 0 }  ,fill opacity=1 ] (279.68,87.58) .. controls (279.68,86.73) and (280.32,86.05) .. (281.11,86.05) .. controls (281.9,86.05) and (282.54,86.73) .. (282.54,87.58) .. controls (282.54,88.43) and (281.9,89.11) .. (281.11,89.11) .. controls (280.32,89.11) and (279.68,88.43) .. (279.68,87.58) -- cycle ;

\draw (309.01,136.25) node [anchor=north west][inner sep=0.75pt]  [font=\fontsize{0.35em}{0.42em}\selectfont]  {$w_{1}$};
\draw (322.03,123.35) node [anchor=north west][inner sep=0.75pt]  [font=\fontsize{0.35em}{0.42em}\selectfont]  {$w_{2}$};
\draw (322.03,102.79) node [anchor=north west][inner sep=0.75pt]  [font=\fontsize{0.35em}{0.42em}\selectfont]  {$w_{3}$};
\draw (281.74,123.36) node [anchor=north west][inner sep=0.75pt]  [font=\fontsize{0.35em}{0.42em}\selectfont]  {$w_{4}$};
\draw (309.07,89.67) node [anchor=north west][inner sep=0.75pt]  [font=\fontsize{0.35em}{0.42em}\selectfont]  {$w_{7}$};
\draw (296.06,102.57) node [anchor=north west][inner sep=0.75pt]  [font=\fontsize{0.35em}{0.42em}\selectfont]  {$w_{5}$};
\draw (268.88,89.07) node [anchor=north west][inner sep=0.75pt]  [font=\fontsize{0.35em}{0.42em}\selectfont]  {$w_{6}$};
\draw (315.32,130) node [anchor=north west][inner sep=0.75pt]  [font=\fontsize{0.24em}{0.28em}\selectfont,color={rgb, 255:red, 208; green, 2; blue, 27 }  ,opacity=1 ]  {$j_{1}$};
\draw (322.7,113.79) node [anchor=north west][inner sep=0.75pt]  [font=\fontsize{0.24em}{0.28em}\selectfont,color={rgb, 255:red, 208; green, 2; blue, 27 }  ,opacity=1 ]  {$j_{2}$};
\draw (293.01,130) node [anchor=north west][inner sep=0.75pt]  [font=\fontsize{0.24em}{0.28em}\selectfont,color={rgb, 255:red, 208; green, 2; blue, 27 }  ,opacity=1 ]  {$j_{3}$};
\draw (282.09,110.48) node [anchor=north west][inner sep=0.75pt]  [font=\fontsize{0.24em}{0.28em}\selectfont,color={rgb, 255:red, 208; green, 2; blue, 27 }  ,opacity=1 ]  {$j_{4}$};
\draw (302.86,95) node [anchor=north west][inner sep=0.75pt]  [font=\fontsize{0.24em}{0.28em}\selectfont,color={rgb, 255:red, 208; green, 2; blue, 27 }  ,opacity=1 ]  {$j_{6}$};
\draw (279.01,95) node [anchor=north west][inner sep=0.75pt]  [font=\fontsize{0.24em}{0.28em}\selectfont,color={rgb, 255:red, 208; green, 2; blue, 27 }  ,opacity=1 ]  {$j_{5}$};

\end{tikzpicture}
  \caption{The planar rooted tree from Figure \ref{fig:planar_tree_ex} with assigned edge-directions $j_1, \dots, j_6 \in \{1, \dots, N\}$.}
\end{figure}
\\
Let $w \in \mathscr{W}_m$, let $\tau \in \mathbb{T}_m$ and let $\mathbf{j} = (j_1, \dots, j_{m-1}) \in \{1, \dots, N\}^{m-1}$ be a set of edge directions arising in the expression of $V_{\tau}$ obtained from \ref{eq:TreeLikeVF}. We will order the elements in $\mathbf{j}$ such that $j_k$ denotes the $k$'th edge added to the tree in its recursive construction. The edge directions $\mathbf{j}$ then have the following interpretation in the expression for $V_{\tau}$: Consider for some $l \in \{1, \dots, m\}$, the $l$'th vertex of the tree $\tau$, with associated label $w_l \in \{1, \dots, d\}$. Then:
\begin{itemize}
    \item The ingoing edge into vertex $l$ (label $w_l$) will always have direction $j_{l-1}$, and determines the direction of the coordinate projection $\pi_{j_{l-1}}$ associated to $V_{w_{l}}$ in the expression of $V_{\tau}$;
    \item The directions of the outgoing edges of a vertex $w_l$ determine the directions in which derivatives are taken with respect to $V_{w_l}$ in the expression of $V_{\tau}$.
\end{itemize}
\subsection{Linear independence of iterated vector fields}
We will use the association between trees and vector fields to gain an explicit expansion of the operators $V_w$. Here, the tree-like vector fields $V_{\tau}$ will come in handy, but first we will need to appropriately 'lift' them into actual operators on $C^{\infty}(\mathbb{R}^N)$. Let $w =(w_1, \dots, w_m) \in \mathscr{W}_m$ be a fixed word, and denote by $\mathbb{T}_w^0$ the collection of letter-labeled recursive trees of order $m+1$ with the root vertex having label $0$ and the remaining vertices having labels $(w_1, \dots, w_m)$. Let $\mathbb{T}^0 = \{ \tau ~|~ \tau \in \mathbb{T}_w^0 \text{ for some } w \in \mathscr{W}\}$ be the collection of all such letter-labeled recursive trees.
\begin{figure}[h!]
        \centering
        \input{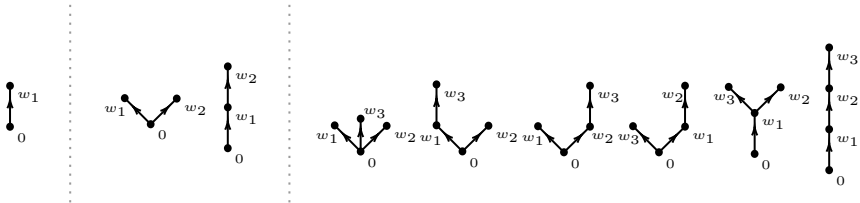}
        \caption{Left to right, the spaces $\bT_{w_1}^0$, $\bT_{(w_1,w_2)}^0$, $\bT_{(w_1, w_2, w_3)}^0$.
        }
        \label{fig:tree_spaces}
\end{figure}
\begin{defin}[Tree-like operators]
Let $\tau^0 \in \mathbb{T}^0$. Then $\tau^0 = [\tau_1  \cdots  \tau_k]_{\bullet_0}$ for some $\tau_1, \dots, \tau_k \in \mathbb{T}$. We define an associated operator $V_{\tau^0}: C^{\infty}(\mathbb{R}^N) \rightarrow C^{\infty}(\mathbb{R}^N)$ by setting:
\begin{align*}
    V_{\bullet_0} \phi(x) &= \phi(x) \quad \text{and} \\ V_{[\tau_1 \cdots \tau_k]_{\bullet_0}}\phi(x) &= \sum_{j_0, \dots, j_{k-1} = 1}^N \frac{\partial^k}{\partial x_{j_{k-1}} \cdots \partial x_{j_0}} \phi(x) \cdot \pi_{j_0}(V_{\tau_1}(x)) \cdots \pi_{j_{k-1}}(V_{\tau_k}(x))
\end{align*}
for all $\phi \in C^{\infty}(\mathbb{R}^N)$ and $x \in \mathbb{R}^N$. Here, $V_{\tau_1}, \dots, V_{\tau_k}$ denote the tree-like vector fields obtained from Definition \ref{def:TreeLikeVF}.
\end{defin}
With the addition of a root vertex $0$, we see that if $\tau^0 \in \mathbb{T}^0$ is a letter-labeled recursive tree of order $m+1$ with labels $(0, w_1, \dots, w_m)$, then it can be expressed as a sum over edge directions $\mathbf{j} = (j_0, \dots, j_{m-1}) \in \{1, \dots, N\}^m$ where $j_1, \dots, j_{m-1}$ play the same role as before, and $j_0$ denotes the first edge going out of the root $0$. The following lemma allows us to express the operator $V_w$ as an expansion of tree-like operators associated to the word $w$.

\begin{lemma}
\label{lemma:Nicola1}
We can expand, for $w \in \mathscr{W}$, the operators $V_w$ in terms of tree-like operators as
\begin{equation}
    V_w = \sum_{\tau^0 \in \bT_w^0} V_{\tau^0}.
\end{equation}
\end{lemma}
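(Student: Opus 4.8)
The plan is to prove the identity by induction on the word length $|w|$, the key technical device being an auxiliary "Leibniz rule" describing how the graphical operation $[\,\cdot\,]$ interacts with directional differentiation of tree-like vector fields.

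First I would record the following auxiliary claim, itself proved by induction on $|\tau|$: for every $\tau \in \bT$, every $i \in \{1,\dots,d\}$ and every $x \in \bR^N$,
\begin{equation*}
    (d V_\tau)_x\big[V_i(x)\big] \;=\; \sum_{\sigma \in \mathrm{Att}_i(\tau)} V_\sigma(x),
\end{equation*}
where $\mathrm{Att}_i(\tau)$ denotes the family of trees obtained from $\tau$ by attaching, through a single new edge, a leaf $\bullet_i$ to one of the vertices of $\tau$ (one tree per vertex). When $\tau = \bullet_j$ this is just the defining formula for $V_{[\bullet_i]_{\bullet_j}}$. For the inductive step one writes $\tau = [\tau_1 \cdots \tau_k]_{\bullet_j}$, uses the explicit formula \eqref{eq:TreeLikeVF} for $V_\tau$, and applies the derivation $\sum_p \pi_p(V_i(x))\,\partial_{x_p}$ via the product rule: the term in which $\partial_{x_p}$ lands on the factor $\partial^k V_j$ produces, after renaming $p$ as a fresh edge direction, exactly the tree with $\bullet_i$ attached at the root $\bullet_j$, while the term in which $\partial_{x_p}$ lands on a factor $\pi_{j_l}(V_{\tau_l}(x))$ is, by the inductive hypothesis applied to the strictly smaller tree $\tau_l$, precisely $\sum_{\sigma_l \in \mathrm{Att}_i(\tau_l)}\pi_{j_l}(V_{\sigma_l}(x))$, i.e. the contribution of all trees with $\bullet_i$ attached somewhere inside the $l$-th branch. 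Summing over the root and all branches ranges exactly over the vertices of $\tau$, which gives $\mathrm{Att}_i(\tau)$.

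With this in hand I would turn to the lemma itself. The base case $|w| = 1$ is immediate: $\bT^0_{(i)} = \{[\bullet_i]_{\bullet_0}\}$ and $V_{[\bullet_i]_{\bullet_0}}\phi(x) = \sum_{j_0}\partial_{x_{j_0}}\phi(x)\,\pi_{j_0}(V_i(x)) = (\nabla\phi(x))^T V_i(x) = V_i\phi(x)$. For the inductive step, assume $V_w = \sum_{\tau^0 \in \bT^0_w} V_{\tau^0}$ and consider the word $wi$. By the recursive definition of the iterated operators, $V_{wi}\phi(x) = \sum_p \pi_p(V_i(x))\,\partial_{x_p} (V_w\phi)(x)$, so by the inductive hypothesis it suffices to show that for each fixed $\tau^0 = [\tau_1 \cdots \tau_k]_{\bullet_0} \in \bT^0_w$,
$\sum_p \pi_p(V_i(x))\,\partial_{x_p}(V_{\tau^0}\phi)(x) = \sum_{\rho^0 \in \mathrm{Att}_i(\tau^0)} V_{\rho^0}\phi(x)$,
where now $\mathrm{Att}_i$ attaches $\bullet_i$ to an arbitrary vertex of $\tau^0$, including the root $\bullet_0$. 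This is the same product-rule computation: differentiating the factor $\partial^k\phi$ yields the tree with a new leaf $\bullet_i$ attached at $\bullet_0$ (the new edge direction being the contracted index), while differentiating a factor $\pi_{j_l}(V_{\tau_l}(x))$ yields, by the auxiliary claim, the trees with $\bullet_i$ placed inside the $l$-th branch. Finally, summing over $\tau^0 \in \bT^0_w$: the families $\mathrm{Att}_i(\tau^0)$ are pairwise disjoint and their union is precisely $\bT^0_{wi}$, which is the defining recursion of $\bT^0_{wi}$ from $\bT^0_w$; bijectivity onto $\{(\tau^0,v) : \tau^0 \in \bT^0_w,\ v \text{ a vertex of } \tau^0\}$ follows because the vertex labelled $i$ is the last one added in the recursive construction, hence a leaf, and deleting it recovers $\tau^0$ uniquely. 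This yields $V_{wi} = \sum_{\rho^0 \in \bT^0_{wi}} V_{\rho^0}$ and closes the induction.

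The genuinely routine parts are the two product-rule expansions and the bookkeeping of which contracted index becomes which edge direction. The one point that needs care is the combinatorial identity $\bigsqcup_{\tau^0 \in \bT^0_w} \mathrm{Att}_i(\tau^0) = \bT^0_{wi}$ with no over-counting — i.e. that every recursive tree in $\bT^0_{wi}$ is produced exactly once — which I expect to be the main (albeit mild) obstacle; once that bijection is pinned down, everything else is a direct application of the Leibniz rule above and the definitions of the tree-like vector fields and operators.
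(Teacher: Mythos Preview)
Your proposal is correct and follows essentially the same approach as the paper, which simply states that the result ``follows from a simple application of the chain rule and an induction on word length''; you have supplied the details that the paper omits. The auxiliary Leibniz identity $(dV_\tau)_x[V_i(x)] = \sum_{\sigma \in \mathrm{Att}_i(\tau)} V_\sigma(x)$ is precisely the organized form of the chain-rule step, and the combinatorial bijection $\bigsqcup_{\tau^0}\mathrm{Att}_i(\tau^0) = \bT^0_{wi}$ is immediate from the paper's recursive definition of $\bT^0_{wi}$ (just note that it is the highest-\emph{indexed} vertex, not merely a vertex with letter-label $i$, that one removes to recover $\tau^0$).
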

\begin{proof}
    This follows from a simple application of the chain rule and an induction on word length.
\end{proof}

\begin{example}
As an example, we see that for $w_1, w_2 \in \{1, \dots, d\}$ and $\phi \in C^{\infty}(\mathbb{R}^N)$:
    $$
    V_{w_1}\phi(x) = (\nabla \phi(x))^T V_{w_1}(x) = \sum_{j_0=1}^N \frac{\partial}{\partial x_{j_0}}\phi(x) \cdot \pi_{j_0}(V_{w_1}(x)) = V_{[\bullet_{w_1}]_{\bullet_0}}\phi(x)$$
    and
    \begin{align*}
    V_{w_1w_2}\phi(x) &= (\nabla V_{w_1}\phi(x))^T V_{w_2}(x) \\
    &= \sum_{j_0,j_1=1}^N \frac{\partial^2}{\partial x_{j_1} \partial x_{j_0}} \phi(x) \cdot \pi_{j_0}(V_{w_1}(x)) \cdot \pi_{j_1}(V_{w_2}(x))\\ 
    &\qquad \qquad + \frac{\partial}{\partial x_{j_0}}\phi(x) \cdot \pi_{j_0}\left( \frac{\partial}{\partial x_{j_1}} V_{w_1}(x) \right) \cdot \pi_{j_1}(V_{w_2}(x)) \\
    &= V_{[\bullet_{w_1}~ \bullet_{w_2}]_{\bullet_0}}\phi(x) +  V_{[[\bullet_{w_2}]_{\bullet_{w_1}}]_{\bullet_0}}\phi(x)  
    \end{align*}
    \demo 
\end{example}
\begin{prop}
\label{prop:LinIndep}
    Fix integers $d, N, m \geq 1$. Let $V_1, \dots, V_d: \mathbb{R}^N \rightarrow \mathbb{R}^N$ be smooth vector fields, and suppose that the tree-like vector fields
    \begin{equation} 
    \label{eq:assumption}
    \{ V_{\tau}: \mathbb{R}^N \rightarrow \mathbb{R}^N ~|~ \tau \in \mathbb{T}_w, ~ w \in \mathscr{W}_m \}
    \end{equation}
    are linearly independent. Then it follows that the operators
    $$ \{V_w: C^{\infty}(\mathbb{R}^N) \rightarrow C^{\infty}(\mathbb{R}^N) ~|~ w \in \mathscr{W}_m \}$$
    are linearly independent.
\end{prop}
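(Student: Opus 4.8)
The plan is to test the operator identity on the coordinate projections $\pi_1,\dots,\pi_N$ and thereby reduce linear independence of the operators $V_w$ to the assumed linear independence of the tree-like vector fields, using the expansion of Lemma \ref{lemma:Nicola1}. Concretely, I would suppose that $\sum_{w\in\mathscr{W}_m} c_w V_w = 0$ as an operator on $C^\infty(\mathbb{R}^N)$ for scalars $c_w\in\mathbb{R}$, and aim to show that every $c_w$ vanishes. By Lemma \ref{lemma:Nicola1} this identity reads $\sum_{w\in\mathscr{W}_m} c_w \sum_{\tau^0\in\mathbb{T}_w^0} V_{\tau^0} = 0$. Now I evaluate both sides at a coordinate projection $\phi=\pi_i$. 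Since every partial derivative of $\pi_i$ of order $\geq 2$ vanishes, the defining formula for the tree-like operators shows that $V_{\tau^0}\pi_i = 0$ whenever the root $\bullet_0$ of $\tau^0$ has two or more children, whereas for a root with a single child, i.e. $\tau^0 = [\tau_1]_{\bullet_0}$, one gets $V_{[\tau_1]_{\bullet_0}}\pi_i = \pi_i(V_{\tau_1})$. The trees $\tau^0\in\mathbb{T}_w^0$ whose root has exactly one child are precisely $\{[\tau_1]_{\bullet_0} : \tau_1\in\mathbb{T}_w\}$: deleting the root of such a $\tau^0$ leaves a recursive tree of order $m$ with labels $w$, and conversely attaching a root $\bullet_0$ to any $\tau_1\in\mathbb{T}_w$ produces one. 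Hence evaluating the operator identity on $\pi_i$ yields $\sum_{w\in\mathscr{W}_m} c_w \sum_{\tau\in\mathbb{T}_w}\pi_i\big(V_{\tau}(x)\big) = 0$ for every $i\in\{1,\dots,N\}$ and every $x\in\mathbb{R}^N$; equating components over $i$, this says exactly that $\sum_{w\in\mathscr{W}_m} c_w \sum_{\tau\in\mathbb{T}_w} V_{\tau} = 0$ as a vector field on $\mathbb{R}^N$.

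To finish, I would rewrite this as $\sum_{w\in\mathscr{W}_m}\sum_{\tau\in\mathbb{T}_w} c_w V_{\tau} = 0$ and invoke the hypothesis. The index sets $\mathbb{T}_w$ are pairwise disjoint as $w$ ranges over $\mathscr{W}_m$, since a tree in $\mathbb{T}_w$ determines the word $w$ through its vertex labels, and each $\mathbb{T}_w$ is nonempty, since recursive trees of order $m$ exist and can carry any labeling. Therefore the displayed equation is a linear relation among the family \eqref{eq:assumption} in which the coefficient attached to each $V_{\tau}$ with $\tau\in\mathbb{T}_w$ equals $c_w$; linear independence of \eqref{eq:assumption} then forces $c_w=0$ for all $w\in\mathscr{W}_m$, which is exactly the conclusion.

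I do not anticipate a genuine obstacle here: the analytic content is entirely carried by Lemma \ref{lemma:Nicola1} and by the hypothesis \eqref{eq:assumption}, and the only slightly delicate point is the bookkeeping in the first step — namely that applying a tree-like operator to a linear function annihilates every summand except those whose root has a single child, and that the surviving summands are in a coefficient-preserving bijection with the tree-like vector fields indexed by $\mathbb{T}_w$. Both facts follow immediately from the recursive definitions of $V_{\tau^0}$ and $V_{\tau}$, so the argument is short.
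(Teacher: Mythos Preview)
Your proof is correct and follows essentially the same route as the paper: both expand $V_w$ via Lemma \ref{lemma:Nicola1}, isolate the contribution from trees whose root has exactly one child, identify these bijectively with the tree-like vector fields $\{V_\tau : \tau\in\mathbb{T}_w\}$, and then invoke the hypothesis. The only difference is cosmetic: the paper separates the ``first-order part'' by appealing abstractly to the linear independence of differential operators $\partial^k$ of different orders, whereas you achieve the same separation concretely by evaluating on the coordinate projections $\pi_i$, which annihilates all higher-order terms directly --- arguably a cleaner implementation of the same idea.
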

\begin{proof}
For any word $w \in \mathscr{W}_m$, we have by the "tree expansion" of $V_w$ from Lemma \ref{lemma:Nicola1} that:
$$ V_w = \sum_{\tau^0 \in \mathbb{T}_w^0} V_{\tau^0}$$
Start by observing that if $\tau^0 \in \mathbb{T}_w^0$, then the degree of the tree at the root (denoted by $\deg_{\tau^0}(0)$) determines the order of the differential operator applied to $\phi \in C^{\infty}(\mathbb{R}^N)$ in the expression of $V_{\tau^0}$. As the differential operators $\partial^k: C^{\infty}(\mathbb{R}^N) \rightarrow C^{\infty}(\mathbb{R}^N)$ for different $k \in \mathbb{N}$ are linearly independent, it will be sufficient for us to establish linear independence of the tree-like operators associated to the first order differential operator $\partial^1 : C^{\infty}(\mathbb{R}^N) \rightarrow C^{\infty}(\mathbb{R}^N)$. Set $\Pi := \{\tau^0 \in \mathbb{T}^0 ~|~ \tau^0 \in \mathbb{T}_w^0, ~ w \in \mathscr{W}_m, ~ \deg_{\tau^0}(0) = 1 \}$. Then the tree-like operators associated to $\partial^1$ are precisely given by the collection $\{V_{\tau^0} ~|~ \tau^0 \in \Pi \}$.
\begin{figure}[h!]
  \centering
  \includegraphics[width=0.7\textwidth]{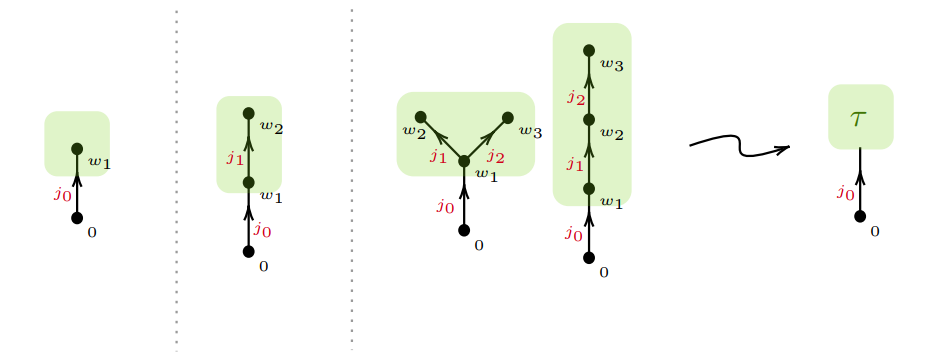}
  \caption{Form of the trees $\tau^0 \in \Pi$. }
  \label{fig:TreeSketch3}
\end{figure}
\ \\
By the recursive tree construction, the first order differential operator must always arise from first edge attached to the root, and thus have direction $j_0$. As no more edges can be added to the root without changing the order of the differential operator, a recursive tree is now built from the first vertex attached to the root. In other words, if $\tau^0 \in \mathbb{T}_w^0 \cap \Pi$, then $\tau^0 = [\tau]_{\bullet_0}$ for some $\tau \in \mathbb{T}_w$ which uniquely determines $\tau^0$, as seen on Figure \ref{fig:TreeSketch3}. We thus see that for all $\phi \in C^{\infty}(\mathbb{R}^N)$ and $x \in \mathbb{R}^N$:
$$ V_{\tau^0}\phi(x) = (\nabla \phi(x))^T V_{\tau}(x) = \sum_{j_0=1}^N \frac{\partial}{\partial x_{j_0}} \phi(x) \cdot \pi_{j_0}(V_{\tau}(x)) \quad \text{for all } \tau^0 \in \Pi,$$
and so linear independence of the vector fields $\{V_{\tau} ~|~ \tau \in \mathbb{T}_w, ~ w \in \mathscr{W}_m\}$ directly implies linear independence of the operators $\{V_{\tau^0} ~|~ \tau^0 \in \Pi\}$. As
$$ V_w = \underbrace{\sum_{\tau^0 \in \mathbb{T}_w^0 \cap \Pi} V_{\tau^0}}_{(1)} + \underbrace{\sum_{\tau^0 \in \mathbb{T}_w^0 \cap \Pi^c} V_{\tau^0}}_{(2)}$$
with all operators of type $(1)$ being linearly independent from all operators of type $(2)$, and all operators of type $(1)$ being linearly independent across different words $w \in \mathscr{W}_m$, we conclude that the operators $\{V_w ~|~ w \in \mathscr{W}_m\}$ must be linearly independent, as desired.
\end{proof}
In Section \ref{subsec:NestedExpo} we will introduce a class of vector fields, namely the ones obtained through \emph{nested exponentials}, which satisfy assumption \eqref{eq:assumption} when chosen such that $N \geq m-1$. The overall approach here should be generally applicable to other choices of vector fields as well. For every word $w \in \mathscr{W}_m$ and every associated letter-labeled recursive tree $\tau \in \mathbb{T}_w$, denote for each $\mathbf{j} = (j_1, \dots, j_{m-1}) \in \{1, \dots, N\}^{m-1}$ the associated term in the expression of $V_{\tau}$ from Definition \ref{def:TreeLikeVF} by $V_{\tau}^{\mathbf{j}}$, such that
\begin{equation} 
\label{eq:JExpansion}
V_{\tau} = \sum_{\mathbf{j} = (j_1, \dots, j_{m-1})} V_{\tau}^{\mathbf{j}}.
\end{equation}
The approach is then as follows:
\begin{enumerate}
    \item Take $N \geq m-1$, and consider the (non-empty) set of edge directions $\mathbf{j} = (j_1, \dots, j_{m-1}) \in \{1, \dots, N\}^{m-1}$ with $j_1, \dots, j_{m-1}$ distinct;
    \item Let $\tau \in \mathbb{T}_w^0$, and let $\mathbf{j} = (j_1, \dots, j_{m-1})$ have distinct directions. Then even if the word $w$ has the same letter appearing multiple times, the associated vertices can be distinguished in their vector-field form, since the in- and out-going edge directions will be distinct (corresponding to having different directions of the projections and derivatives of the associated vector fields);
    \item If the vector fields $V_1, \dots, V_d$ are chosen suitably, then the previous point should be enough to show that the vector fields
     $$ \left\{ V_{\tau}^{\mathbf{j}}: \mathbb{R}^N \rightarrow \mathbb{R}^N ~\Big{|}~ \begin{array}{l}
    \mathbf{j} = (j_1, \dots, j_{m-1}) \in \{1, \dots, N\}^{m-1} \text{ with } j_1, \dots, j_{m-1} \text{ distinct} \\
    \tau \in \mathbb{T}_w \text{ for some } w \in \mathscr{W}_m
    \end{array} \right\}$$
    are linearly independent among themselves, and indeed also linearly independent from 
    $$\left\{ V_{\tau}^{\mathbf{j}}: \mathbb{R}^N \rightarrow \mathbb{R}^N ~\Big{|}~ \begin{array}{l}
    \mathbf{j} = (j_1, \dots, j_{m-1}) \in \{1, \dots, N\}^{m-1} \text{ with } j_1, \dots, j_{m-1} \text{ not distinct} \\
    \tau \in \mathbb{T}_w \text{ for some } w \in \mathscr{W}_m
    \end{array} \right\}.$$
\end{enumerate}
The third point will then be sufficient to conclude that the vector fields $\{V_{\tau} ~|~ \tau \in \mathbb{T}_w, ~ w \in \mathscr{W}_m\}$ are linearly independent, using the expansion \eqref{eq:JExpansion} and a similar argument to the one used in the proof of Proposition \ref{prop:LinIndep}.
\begin{rmk}
\label{rmk:Dimensions}
    In general a dimensional requirement relating $N$ and $m$ is needed in order for the vector fields $V_1, \dots, V_d: \mathbb{R}^N \rightarrow \mathbb{R}^N$ to satisfy the assumption \eqref{eq:assumption}. Consider for instance the case where $m = 3$, $N = 1$ and $d = 3$. Then for $i,j,k \in \{1,2,3\}$ we have 
    \begin{align*}
        V_{kji} =& ( V_i'' V_j V_k +  V_i' V_j' V_k) \partial_x  
        + ( 2V_i' V_j V_k + V_i V_j' V_k) \partial^2_x
    + (V_i V_j V_k) \partial^3_x
    \end{align*}
    and the non-trivial relationship
    \[
    V_{123}+V_{231}+V_{312} = V_{132}+V_{213}+V_{321} \, ,
    \]
    so $\{V_w: C^{\infty}(\mathbb{R}) \rightarrow C^{\infty}(\mathbb{R}) ~|~ w \in \mathscr{W}_3\}$ is \emph{not} linearly independent (no matter our choice of vector fields $V_1, \dots, V_d: \mathbb{R} \rightarrow \mathbb{R}$).\\
    
    The relationship needed between $N$ and $m$ to avoid breaking the possibility of linear independence between the operators $\{V_w: C^{\infty}(\mathbb{R}^N) \rightarrow C^{\infty}(\mathbb{R}^N) ~|~ w \in \mathscr{W}_m\}$ is yet to be exactly determined - moreover, this relationship should also to some extent be dependent on the dimension $d$. For instance, if $d = 1$ then linear independence can easily be obtained for all $N, m \in \mathbb{N}$ (given suitable choices of vector fields). As noted earlier, we shall see in Section \ref{subsec:NestedExpo} that the assumption $N \geq m-1$ (for any choice of $d \in \mathbb{N}$) will also be sufficient to construct a working example where the operators $\{V_w ~|~ w \in \mathscr{W}_m\}$ become linearly independent.
    \demo
\end{rmk}
\subsection{Example: Nested exponentials}
\label{subsec:NestedExpo}
First, recall the definition of algebraic independence over the field of rational numbers $\mathbb{Q}$. For a more general algebraic definition, see e.g., \cite{Morandi}[Def. 19.1].
\begin{defin}[Algebraic independence over $\mathbb{Q}$]
Let $\alpha_1, \dots, \alpha_n \in \mathbb{R}$. We say that the set $\{\alpha_1, \dots, \alpha_n\}$ is \emph{algebraically independent over $\mathbb{Q}$} if
$$\forall p \in \mathbb{Q}[x_1, \dots, x_n] \setminus \{0\}: \quad p(\alpha_1, \dots, \alpha_n) \neq 0,$$
i.e., if $\alpha_1, \dots, \alpha_n$ cannot be made to cancel out through any polynomial equation with coefficients in $\mathbb{Q}$, other than of course the trivial zero polynomial $p \equiv 0$.\\

More generally, a set $A \subseteq \mathbb{R}$ is said to be \emph{algebraically independent over $\mathbb{Q}$}, if any finite subset of $A$ is algebraically independent over $\mathbb{Q}$. If a set is not algebraically independent over $\mathbb{Q}$, we say that it is \emph{algebraically dependent (over $\mathbb{Q}$)}.
\end{defin}
We will start by examining the random neural depth two vector fields given by \eqref{eq:VFDepth}, and then show that when $N \geq m-1$ and $\sigma(x) = \exp(x)$, these vector fields indeed satisfy the assumption of linear independence between tree-like vector fields of order $m$ from Proposition \ref{prop:LinIndep}. We note that if the random matrices $A_1, \dots, A_d$ and $D_1, \dots, D_d$ have entries which are i.i.d. with distribution absolutely continuous with respect to the Lebesgue measure, then all entries must almost surely be algebraically independent. It will thus be sufficient for us to study the setting where $A_i, D_i \in \mathbb{R}^{N \times N}$ are deterministic, and given by:
$$ A_i = \left( \begin{array}{cccc}
\alpha_{11}(i) & \alpha_{12}(i) & \dots & \alpha_{1N}(i) \\
\alpha_{21}(i) & \alpha_{22}(i) & \dots & \alpha_{2N}(i) \\
\vdots & \vdots & \ddots & \vdots \\
\alpha_{N1}(i) & \alpha_{N2}(i) & \dots & \alpha_{NN}(i)
\end{array}\right) \quad \text{and} \quad D_i = \left( \begin{array}{cccc}
d_1(i) & 0 & \dots & 0 \\
0 & d_2(i) & \dots & 0 \\
\vdots & \vdots & \ddots & \vdots \\
0 & 0 & \dots & d_N(i)
\end{array}\right),$$
where the coefficients $\{ \alpha_{jk}(i) ~|~ i \in \{1, \dots, d\}, j,k \in \{1, \dots, N\}\} \cup \{d_j(i) ~|~ i \in \{1, \dots, d\}, j \in \{1, \dots, N\} \}$ are algebraically independent over $\mathbb{Q}$.
\subsubsection{Neural depth-two vector fields}
\label{subsub:depth-two}
Start by letting $\sigma: \mathbb{R} \rightarrow \mathbb{R}$ be a real analytic function with infinite radius of convergence, and define the vector fields $V_1, \dots, V_d: \mathbb{R}^N \rightarrow \mathbb{R}^N$ by:
$$ V_i(x) = \sigma(A_i \sigma(D_i x)) \quad \text{for } x \in \mathbb{R}^N.$$
As we know that the tree-like vector fields $\{V_{\tau} ~|~ \tau \in \mathbb{T}\}$ will be products of different order partial derivatives (and coordinate projections) of the vector fields $V_1, \dots, V_d$, we first make a brief observation about the form of these partial derivatives. Suppose that $N \geq m-1$, such that the edge directions $j_1, \dots, j_{m-1} \in \{1, \dots, N\}$ can be chosen to be distinct. Then we have for all $k \in \{1, \dots, N\}$, $i \in \{1, \dots, d\}$, and $j_1, \dots, j_{m-1} \in \{1, \dots, N\}$ distinct, that:
\begin{align*}
    \pi_k(V_i)(x) &= \sigma(\pi_k(A_i \sigma(D_ix))), \\
    \partial_{j_1} \pi_k(V_i)(x) &= \sigma'(\pi_k(A_i \sigma(D_i x))) \alpha_{kj_1} \sigma'(d_{j_1}(i) x_{j_1}), \\
    \frac{\partial^2}{\partial j_2 \partial j_1} \pi_k(V_i)(x) &= \sigma''(\pi_k(A_i\sigma(D_ix))) \alpha_{kj_1} \alpha_{kj_2} \sigma'(d_{j_1}(i)x_{j_1}) \sigma'(d_{j_2}(i)x_{j_2}), \\
    \vdots \quad &\qquad \vdots \\
    \frac{\partial^{(m-1)}}{\partial j_{m-1} \cdots \partial j_1} \pi_k(V_i)(x) &= \sigma^{(m-1)}(\pi_k(A_i\sigma(D_ix))) \prod_{l=1}^{m-1} \alpha_{kj_l} \sigma'(d_{j_l}(i) x_{j_l})
\end{align*}
Take some word $w=(w_1, \dots, w_m)$, some tree $\tau \in \mathbb{T}_w$, and a tuple of \emph{distinct} edge directions $\mathbf{j} = (j_1, \dots, j_{m-1}) \in \{1, \dots, N\}^{m-1}$. Then as the tree $\tau$ has $m-1$ edges, there will be $m-1$ partial derivatives taken in the expression of $V_{\tau, \mathbf{j}}$, and each of these will be taken in a different direction associated to one of the entries of $\mathbf{j}$. Denote by $\mathbf{i} = (i_1, \dots, i_{m-1})$ the indices of the vertices (i.e., the indices of the letters in $w \in \mathscr{W}_m$) with respect to which the derivative in each direction $(j_1, \dots, j_{m-1})$ is taken (see Figure \ref{fig:TreeSketch4}). Then, as we have by the recursive construction of trees that the edge with direction $j_1$ must necessarily be attached to the root vertex $w_1$, the edge with direction $j_2$ must be attached to one of the vertices $\{w_1, w_2\}$, the edge with direction $j_3$ must be attached to one of the vertices $\{w_1, w_2, w_3\}$, and so on, we must have that the vertex indices $\mathbf{i} = (i_1, \dots, i_{m-1})$ must be contained in the collection:
\begin{equation}
\label{eq:derivvector}
    \{(i_1, \dots, i_{m-1}) ~|~ i_1 \in \{1\}, i_2 \in \{1,2\}, i_3 \in \{1,2,3\}, \dots, i_{m-1} \in \{1, \dots, m-1\} \}.
\end{equation}
\begin{figure}[h!]
  \centering
  \tikzset{every picture/.style={line width=0.75pt}} 

\begin{tikzpicture}[x=0.75pt,y=0.75pt,yscale=-1.5,xscale=1.5]

\draw  [color={rgb, 255:red, 0; green, 0; blue, 0 }  ,draw opacity=1 ][fill={rgb, 255:red, 0; green, 0; blue, 0 }  ,fill opacity=1 ] (292.63,121.26) .. controls (292.63,120.41) and (293.27,119.73) .. (294.06,119.73) .. controls (294.85,119.73) and (295.49,120.41) .. (295.49,121.26) .. controls (295.49,122.1) and (294.85,122.79) .. (294.06,122.79) .. controls (293.27,122.79) and (292.63,122.1) .. (292.63,121.26) -- cycle ;
\draw [color={rgb, 255:red, 0; green, 0; blue, 0 }  ,draw opacity=1 ]   (294.06,100.7) -- (294.06,121.26) ;
\draw [shift={(294.06,107.58)}, rotate = 90] [color={rgb, 255:red, 0; green, 0; blue, 0 }  ,draw opacity=1 ][line width=0.75]    (4.37,-1.32) .. controls (2.78,-0.56) and (1.32,-0.12) .. (0,0) .. controls (1.32,0.12) and (2.78,0.56) .. (4.37,1.32)   ;
\draw    (281.11,87.58) -- (294.06,100.7) ;
\draw [shift={(285.19,91.72)}, rotate = 45.37] [color={rgb, 255:red, 0; green, 0; blue, 0 }  ][line width=0.75]    (4.37,-1.32) .. controls (2.78,-0.56) and (1.32,-0.12) .. (0,0) .. controls (1.32,0.12) and (2.78,0.56) .. (4.37,1.32)   ;
\draw    (307.07,87.8) -- (294.06,100.7) ;
\draw [shift={(302.98,91.86)}, rotate = 135.25] [color={rgb, 255:red, 0; green, 0; blue, 0 }  ][line width=0.75]    (4.37,-1.32) .. controls (2.78,-0.56) and (1.32,-0.12) .. (0,0) .. controls (1.32,0.12) and (2.78,0.56) .. (4.37,1.32)   ;
\draw  [color={rgb, 255:red, 0; green, 0; blue, 0 }  ,draw opacity=1 ][fill={rgb, 255:red, 0; green, 0; blue, 0 }  ,fill opacity=1 ] (292.63,100.7) .. controls (292.63,99.86) and (293.27,99.17) .. (294.06,99.17) .. controls (294.85,99.17) and (295.49,99.86) .. (295.49,100.7) .. controls (295.49,101.55) and (294.85,102.23) .. (294.06,102.23) .. controls (293.27,102.23) and (292.63,101.55) .. (292.63,100.7) -- cycle ;
\draw  [color={rgb, 255:red, 0; green, 0; blue, 0 }  ,draw opacity=1 ][fill={rgb, 255:red, 0; green, 0; blue, 0 }  ,fill opacity=1 ] (305.64,87.8) .. controls (305.64,86.95) and (306.28,86.27) .. (307.07,86.27) .. controls (307.86,86.27) and (308.5,86.95) .. (308.5,87.8) .. controls (308.5,88.64) and (307.86,89.33) .. (307.07,89.33) .. controls (306.28,89.33) and (305.64,88.64) .. (305.64,87.8) -- cycle ;
\draw  [color={rgb, 255:red, 0; green, 0; blue, 0 }  ,draw opacity=1 ][fill={rgb, 255:red, 0; green, 0; blue, 0 }  ,fill opacity=1 ] (279.68,87.58) .. controls (279.68,86.73) and (280.32,86.05) .. (281.11,86.05) .. controls (281.9,86.05) and (282.54,86.73) .. (282.54,87.58) .. controls (282.54,88.43) and (281.9,89.11) .. (281.11,89.11) .. controls (280.32,89.11) and (279.68,88.43) .. (279.68,87.58) -- cycle ;
\draw [color={rgb, 255:red, 0; green, 0; blue, 0 }  ,draw opacity=1 ]   (307.07,67.24) -- (307.07,87.8) ;
\draw [shift={(307.07,74.12)}, rotate = 90] [color={rgb, 255:red, 0; green, 0; blue, 0 }  ,draw opacity=1 ][line width=0.75]    (4.37,-1.32) .. controls (2.78,-0.56) and (1.32,-0.12) .. (0,0) .. controls (1.32,0.12) and (2.78,0.56) .. (4.37,1.32)   ;
\draw  [color={rgb, 255:red, 0; green, 0; blue, 0 }  ,draw opacity=1 ][fill={rgb, 255:red, 0; green, 0; blue, 0 }  ,fill opacity=1 ] (305.64,67.24) .. controls (305.64,66.4) and (306.28,65.71) .. (307.07,65.71) .. controls (307.86,65.71) and (308.5,66.4) .. (308.5,67.24) .. controls (308.5,68.09) and (307.86,68.77) .. (307.07,68.77) .. controls (306.28,68.77) and (305.64,68.09) .. (305.64,67.24) -- cycle ;

\draw (281.74,123.36) node [anchor=north west][inner sep=0.75pt]  [font=\fontsize{0.35em}{0.42em}\selectfont]  {$w_{1}$};
\draw (309.07,89.67) node [anchor=north west][inner sep=0.75pt]  [font=\fontsize{0.35em}{0.42em}\selectfont]  {$w_{4}$};
\draw (296.06,102.57) node [anchor=north west][inner sep=0.75pt]  [font=\fontsize{0.35em}{0.42em}\selectfont]  {$w_{2}$};
\draw (268.88,89.07) node [anchor=north west][inner sep=0.75pt]  [font=\fontsize{0.35em}{0.42em}\selectfont]  {$w_{3}$};
\draw (286.09,113.48) node [anchor=north west][inner sep=0.75pt]  [font=\fontsize{0.24em}{0.28em}\selectfont,color={rgb, 255:red, 208; green, 2; blue, 27 }  ,opacity=1 ]  {$j_{1}$};
\draw (302.66,95.44) node [anchor=north west][inner sep=0.75pt]  [font=\fontsize{0.24em}{0.28em}\selectfont,color={rgb, 255:red, 208; green, 2; blue, 27 }  ,opacity=1 ]  {$j_{3}$};
\draw (279.01,96.83) node [anchor=north west][inner sep=0.75pt]  [font=\fontsize{0.24em}{0.28em}\selectfont,color={rgb, 255:red, 208; green, 2; blue, 27 }  ,opacity=1 ]  {$j_{2}$};
\draw (309.07,69.11) node [anchor=north west][inner sep=0.75pt]  [font=\fontsize{0.35em}{0.42em}\selectfont]  {$w_{5}$};
\draw (309.26,78.44) node [anchor=north west][inner sep=0.75pt]  [font=\fontsize{0.24em}{0.28em}\selectfont,color={rgb, 255:red, 208; green, 2; blue, 27 }  ,opacity=1 ]  {$j_{4}$};
\draw (340.27,90.95) node [anchor=north west][inner sep=0.75pt]  [font=\tiny]  {$( i_{1} ,i_{2} ,i_{3} ,i_{4}) \ =\ ( 1,2,2,4)$};

\end{tikzpicture}
  \caption{Illustration of how the indices $\mathbf{i} = (i_1, \dots, i_{m-1})$ are obtained in a specific recursive tree with $m = 5$.}
  \label{fig:TreeSketch4}
\end{figure}
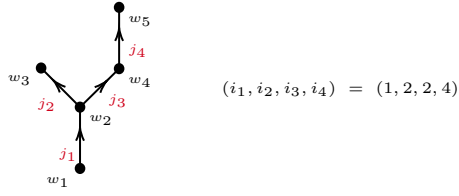
\ \\
As each element in the collection \eqref{eq:derivvector} precisely determines the structure of a recursive tree (as it explicitly pinpoints the vertices to which each edge in the recursive construction is attached), the cardinality of the set $\eqref{eq:derivvector}$ is exactly the number of recursive trees with $m$ vertices. In particular a tree $\tau \in \mathbb{T}_w$, $w \in \mathscr{W}_m$ can be uniquely represented by the pair $(\mathbf{i}, w)$ (with the possible add-on of edge directions $\mathbf{j}$).\\

Suppose that we are given a pair $(\mathbf{i}, w) = ((i_1, \dots, i_{m-1}), (w_1, \dots, w_m))$ uniquely determining a letter-labeled recursive tree $\tau \in \mathbb{T}_w$ of order $m$, and let $\mathbf{j} = (j_1, \dots, j_{m-1})$ be distinct edge directions (supposing $N \geq m-1$). 
\begin{itemize}
    \item We denote for each vertex index $i \in \{1, \dots, m\}$ by $n_i$ the number of times the index appears in the $(m-1)$-tuple $\mathbf{i}=(i_1, \dots, i_{m-1})$. Then $n_i$ is precisely the number of out-going edges of the vertex with index $i$ in the tree $\tau$, and so $\sum_{i=1}^{m-1} n_i = m-1$. In the example on Figure \ref{fig:TreeSketch4} we would have $(n_1, n_2, n_3, n_4, n_5) = (1, 2, 0, 1, 0)$.

    \item We split the edge directions of $\mathbf{j}$ into tuples associated to each vertex, such that $\mathbf{j}^{(1)}, \dots, \mathbf{j}^{(m)}$ denote the edge directions going out of vertices $w_1, \dots, w_m$, respectively (with $\mathbf{j}^{(m)} = ()$ by construction). In the example on Figure \ref{fig:TreeSketch4} this would correspond to having $\mathbf{j}^{(1)} = (j_1)$, $\mathbf{j}^{(2)} = (j_2, j_3)$, $\mathbf{j}^{(3)} = ()$, $\mathbf{j}^{(4)} = (j_4)$, $\mathbf{j}^{(5)} = ()$, and by construction we naturally have that $|\mathbf{j}^{(i)}| = n_i$.
\end{itemize}
For every index $i \in \{1, \dots, m\}$ we can then write $\frac{\partial^{n_i}}{\partial x_{\mathbf{j}^{(i)}}}$ for the $n_i$'th order partial derivative operator taken in the directions determined by $\mathbf{j}^{(i)}$. If $n_i = 0$ (and so $\mathbf{j}^{(i)} = ()$) we let $\frac{\partial^{n_i}}{\partial x_{\mathbf{j}^{(i)}}}$ be the identity operator. By the recursive construction of $V_{\tau}$ from Definition \ref{def:TreeLikeVF} and the form of the partial derivatives from earlier, we now see that for any $j_0 \in \{1, \dots, N\}$:
\begin{align}
\label{eq:TreeVFExpression}
\begin{split}
    \pi_{j_0}(V_{\tau}^{\mathbf{j}}) &= \prod_{i=1}^m \frac{\partial^{n_i}}{\partial x_{\mathbf{j}^{(i)}}} \pi_{j_{i-1}}(V_{w_i}) \\
    &= \prod_{i=1}^m \sigma^{(n_i)}(\pi_{j_{i-1}}(A_{w_i} \sigma(D_{w_i}x))) \prod_{j \in \mathbf{j}^{(i)}} \alpha_{j_{i-1}j} \sigma'(d_j(w_i)x_j)\\
    &=  \sigma^{{\color{blue}(n_1)}}(\pi_{j_0}(A_{w_1}\sigma(D_{w_1}x))) \sigma^{{\color{blue}(n_2)}}(\pi_{j_1}(A_{w_2}\sigma(D_{w_2}x))) \cdots \sigma^{{\color{blue}(n_m)}}(\pi_{j_{m-1}}(A_{w_m}\sigma(D_{w_m}x)))\\
    &\qquad \times \alpha_{j_{i_1 -1}j_1}({\color{blue}w_{i_1}}) \alpha_{j_{i_2 - 1}j_2}({\color{blue}w_{i_2}}) \cdots \alpha_{j_{i_{m-1}-1}j_{m-1}}({\color{blue}w_{i_{m-1}}})\\
    &\qquad \times \sigma'(d_{j_1}({\color{blue}w_{i_1}})x_{j_1}) \sigma'(d_{j_2}({\color{blue}w_{i_2}})x_{j_2}) \cdots \sigma'(d_{j_{m-1}}({\color{blue}w_{i_{m-1}}})x_{j_{m-1}})
    \end{split}
\end{align}
\subsubsection{Nested exponentials}
Suppose now that $\sigma(x) = e^x$, such that the vector fields $V_1, \dots, V_d: \mathbb{R}^N \rightarrow \mathbb{R}^N$ are given by
$$ V_i(x) = \exp(A_i \exp(D_i x)) \quad \text{for } x \in \mathbb{R}^N. $$
Then the first line in \eqref{eq:TreeVFExpression} will be the same for all $\tau \in \mathbb{T}_w$ (given fixed distinct edge directions $\mathbf{j}$), since $\sigma^{(n)}(x) = \exp(x) = \sigma(x)$ for all $n \in \mathbb{N}_0$. In particular, we get that:
\begin{align*}
    \pi_{j_0}(V_{\tau}^{\mathbf{j}}) &=  \exp\left( \sum_{i=1}^ m \pi_{j_{i-1}}(A_{w_i}\exp(D_{w_i}x))\right) \\
    &\qquad \times \alpha_{j_{i_1 -1}j_1}({\color{blue}w_{i_1}}) \alpha_{j_{i_2 - 1}j_2}({\color{blue}w_{i_2}}) \cdots \alpha_{j_{i_{m-1}-1}j_{m-1}}({\color{blue}w_{i_{m-1}}})\\
    &\qquad \times \exp\left( \sum_{n=1}^{m-1} d_{j_n}({\color{blue}w_{i_n}})x_{j_n} \right).
\end{align*}
The important parts of the expression here are the exponential factors (especially the one on the last line), as we do not care particularly about the coefficients in front when dealing with linear independence. As the constants $\{d_j(i) ~|~ j \in \{1, \dots, N\}, i \in \{1, \dots, d\}\} \cup \{\alpha_{kj}(i)~|~ k,j \in \{1, \dots, N\}, i \in \{1, \dots, d\}\}$ are algebraically independent over $\mathbb{Q}$, we see that each pair of $(m-1)$-tuples $(\mathbf{i}, \mathbf{j}) = ((i_1, \dots, i_{m-1}), (j_1, \dots, j_{m-1}))$ with $j_1, \dots, j_{m-1}$ distinct, will yield a \emph{unique} pair, consisting of the sum $\sum_{i=1}^m \pi_{j_{i-1}}(A_{w_i} \exp(D_{w_i}x)) = \sum_{j=1}^N \sum_{i=1}^m \alpha_{j_{i-1}j}(w_i) \exp(d_j(w_i)x_j)$ and a linear combination $\sum_{n=1}^{m-1} d_{j_n}(w_{i_n}) x_{j_n}$. As the functions $\{e^{c_1x_1 + \dots + c_{m-1} x_{m-1}} ~|~ c_1, \dots, c_{m-1} \in \mathbb{R}\}$ and $$\{ e^{\alpha_1 e^{\beta_1x_j} + \dots + \alpha_{(m-1)} e^{\beta_{m-1}x_j}} ~|~ \alpha_{1}, \dots, \alpha_{(m-1)}, \beta_{1}, \dots, \beta_{(m-1)}\in \mathbb{R} \}$$ are linearly independent for all $j \in \{1, \dots, N\}$, this implies that the above maps are indeed linearly independent - and indeed they are also linearly independent from any maps for which the edge directions $\mathbf{j} = (j_1, \dots, j_{m-1})$ are \emph{not} distinct. In other words, whenever $N \geq m-1$, the maps
$$ \left\{ \pi_{j_0}(V_{\tau}^{\mathbf{j}}): \mathbb{R}^N \rightarrow \mathbb{R} ~\Big{|}~ \begin{array}{l}
\tau \in \mathbb{T}_w, ~ w \in \mathscr{W}_m, ~ j_0 \in \{1, \dots, N\}\\
\mathbf{j} = (j_1, \dots, j_{m-1}) \in \{1, \dots, N\}^{m-1} \text{ with distinct entries}
\end{array} \right\}$$
are linearly independent. To briefly see that these maps are indeed also linearly independent from any $\pi_{j_0}(V_{\tau}^{\mathbf{j}})$ with $\mathbf{j}$ \emph{not} distinct, fix $\mathbf{j} = (j_1, \dots, j_{m-1})$ with \emph{distinct} entries and $\mathbf{j}' = (j_1', \dots, j_{m-1}')$ with \emph{non-distinct} entries. Then as $\mathbf{j}'$ will have at least one repeat of entries, we must necessarily have that there exists an entry, say $j^*$ in $\mathbf{j}$, such that $j^*$ does \emph{not} appear anywhere in $\mathbf{j}'$, i.e., $j^* \neq j_n'$ for all $n \in \{1, \dots, m-1\}$. As the vector fields $V_{\tau}^{\mathbf{j}'}$ associated to $\mathbf{j}'$ will thus \emph{never} take derivatives in direction $j^*$, there is no way of obtaining a factor of the type $\exp\left( d_{j^*}(w_{i^*}) x_{j^*} \right)$ in the expression of $V_{\tau}^{\mathbf{j}'}$ (with $i^*$ denoting whatever index is associated to $j^*$ in $\mathbf{i} = (i_1, \dots, i_{m-1})$). Utilizing the linear independence of exponential functions from before, this gives the desired.
\\

Removing the initial coordinate projection yields of course also that the vector fields
$$ \left\{ V_{\tau}^{\mathbf{j}}: \mathbb{R}^N \rightarrow \mathbb{R}^N ~\Big{|}~ \begin{array}{l}
\tau \in \mathbb{T}_w, ~ w \in \mathscr{W}_m\\
\mathbf{j} = (j_1, \dots, j_{m-1}) \in \{1, \dots, N\}^{m-1} \text{ with distinct entries}
\end{array} \right\}$$
are linearly independent among themselves, and also linearly independent from the collection of vector fields
$$ \left\{ V_{\tau}^{\mathbf{j}}: \mathbb{R}^N \rightarrow \mathbb{R}^N ~\Big{|}~ \begin{array}{l}
\tau \in \mathbb{T}_w, ~ w \in \mathscr{W}_m\\
\mathbf{j} = (j_1, \dots, j_{m-1}) \in \{1, \dots, N\}^{m-1} \text{ with non-distinct entries}
\end{array} \right\}.$$
Utilizing finally the fact that
$$ V_{\tau} = \sum_{\mathbf{j} = (j_1, \dots, j_{m-1})} V_{\tau}^{\mathbf{j}},$$
we conclude that the collection $\{V_{\tau} ~|~ \tau \in \mathbb{T}_w, ~ w \in \mathscr{W}_m\}$ is linearly independent when $N \geq m-1$ and $\sigma(x) = e^x$. This implies by Proposition \ref{prop:LinIndep} that the iterated vector fields $\{V_w ~|~ w \in \mathscr{W}_m\}$ must also be linearly independent for every fixed $m \leq N+1$.
\begin{rmk}[The general depth-two case]
    Note here that we have only specified $\sigma(x) = \exp(x)$ in order to make quick and direct conclusions about linear independence, utilizing well-known results for exponential functions. It should be possible to extend this to many other suitable choices of real analytic activation functions (for instance chosen \emph{generically}, i.e., such that all coefficients in the power series expansion are algebraically independent over $\mathbb{Q}$).
    \demo
\end{rmk}
\begin{rmk}[The no-depth case]
\label{rmk:NoDepth}
    One might wonder whether the strategy outlined here would not also work in the case where the vector fields $V_1, \dots, V_d: \mathbb{R}^N \rightarrow \mathbb{R}^N$ are chosen \emph{without depth}, i.e., in the classical 'randomized signature form':
    $$ V_i (x) = \sigma(A_i x) \quad \text{for } x \in \mathbb{R}^N.$$
    In this case, however, one sees that for $j, k \in \{1, \dots, N\}$ the partial derivatives will be of the form:
    $$ \partial_j \pi_k(V_i)(x) = \sigma'(\pi_k(A_ix)) \alpha_{kj}(i),$$
    i.e., the only way in which the directions of the derivatives is recorded is through the coefficients $\alpha_{kj}$ appearing out front. These are of course not very helpful when establishing linear independence, and indeed it can be checked that the tree-like vector fields $\{ V_{\tau}^{\mathbf{j}} ~|~ \tau \in \mathbb{T}_w, ~ w \in \mathscr{W}_m, ~ \mathbf{j} \text{ distinct} \}$ are \underline{not} linearly independent in this case.\\
    
    To see this, take for instance two words in $\mathscr{W}_m$:
    $$ w = (w_1, \dots, w_m) \quad \text{and} \quad w' = (w_1, w_{m-1}, w_{m-2}, \dots, w_3, w_2, w_m),$$
    and let $\tau \in \mathbb{T}_w$ and $\tau' \in \mathbb{T}_{w'}$ denote the fixed recursive trees corresponding to a ladder with letter-labels given by each of these words. I.e., the structure of $\tau$ and $\tau'$ is determined uniquely by the indices $(i_1, i_2, \dots, i_{m-1}) = (1, 2, \dots, m-1)$ and $(n_1, n_2, \dots, n_{m-1}, n_m) = (1,1, \dots, 1,0)$. Set 
    $$\mathbf{j} = (1,2,3,4,\dots, m-1) \quad \text{and} \quad \mathbf{j}' = (m-2, m-3, \dots, 2, 1, m-1).$$ 
    \begin{figure}[h!]
  \centering
  \input{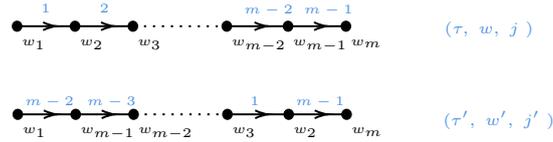}
  \caption{The 'ladder'-like trees $(\tau, w, \mathbf{j})$ and $(\tau', w', \mathbf{j}')$ }
  \label{fig:TreeSketch5}
\end{figure}
\ \\
    Then:
    \begin{align*}
        V_{\tau}^{\mathbf{j}} &= c_{\tau, \mathbf{j}} \sigma'(A_{w_1}x) \sigma'(\pi_1(A_{w_2}x)) \cdots \sigma'(\pi_{m-2}(A_{w_{m-1}}x))\sigma(\pi_{m-1}(A_{w_m}x)) \quad \text{and}\\
        V_{\tau'}^{\mathbf{j}'} &= c_{\tau', \mathbf{j}'} \sigma'(A_{w_1}x) \sigma'(\pi_{m-2}(A_{w_{m-1}}x)) \cdots \sigma'(\pi_1(A_{w_2}x)) \sigma(\pi_{m-1}(A_{w_m}x)),
    \end{align*}
    and so $V_{\tau}^{\mathbf{j}} = \frac{c_{\tau, \mathbf{j}}}{c_{\tau', \mathbf{j}'}} V_{\tau'}^{\mathbf{j'}}$, showing that the two tree-like vector fields are \emph{linearly dependent}, even though they are associated to different words and have distinct edge directions.
    \demo
\end{rmk}
\newpage
\section{Signature reconstruction}
\label{sec:SigRec}

%
%


\subsection{Signature reconstruction on $\mathbb{R}^N$}
\label{subsec:sigreconstruct}
Consider the CDE \eqref{eq:RNDE} from the $\mathbb{R}^N$-case. It was first suggested in \cite{AkyildirimTeichmann}
that the information obtained by considering such CDEs across all different initial values $y \in \mathbb{R}^N$ given a specified choice of random neural vector fields $V_1, \dots, V_d$ would allow for the reconstruction of the components of the signature $S(X)$ almost surely. We provide here a detailed proof of this fact for general vector fields satisfying an assumption of linear independence between the associated tree-like vector fields up to a certain order $L$, but note that the reconstruction is then only possible up to the $L$'th order signature component. We note also that the assumption of linear independence between the associated tree-like vector fields implicitly depends on the dimension of the hidden space $\mathbb{R}^N$ of the CDE (see Remark \ref{rmk:Dimensions}).
\begin{thm}[Signature reconstruction on $\mathbb{R}^N$]
\label{thm:sigreconRN}
Fix some $N, L \in \mathbb{N}$, let $V_1, \dots, V_d: \mathbb{R}^N \rightarrow \mathbb{R}^N$ be smooth vector fields, and suppose that for all fixed $m \leq L$, the associated tree-like vector fields
$$ \{ V_{\tau} ~|~ \tau \in \mathbb{T}_w, ~ w \in \mathscr{W}_m\}$$
are linearly independent. Consider for each $y \in \mathbb{R}^N$, the CDE:
\begin{equation}
\label{eq:SigDE}
Y_t = y + \sum_{i=1}^d \int_0^t V_i(Y_s) dX_s^i \quad \text{for every } t \in [0,T]
\end{equation}
from \eqref{eq:RNDE}, and denote the solution associated to the initial value $y$ by $Y^{y}: [0,T]\rightarrow \mathbb{R}^N$. Then the signature components of $S(X)$ up to order $L$ can be uniquely reconstructed from the collection of solutions $(Y^y)_{y \in \mathbb{R}^N}$
\end{thm}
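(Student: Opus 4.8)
The plan is to read off the signature components one level at a time from the Taylor expansion of Theorem~\ref{thm:BaudoinZhang}, using the linear independence hypothesis exactly as packaged by Proposition~\ref{prop:LinIndep}. Fix an arbitrary $g\in C^\infty(\mathbb{R}^N)$. Since the vector fields $V_1,\dots,V_d$ are given and the family $(Y^y)_{y\in\mathbb{R}^N}$ is the data, the function $y\mapsto g(Y^y_T)$ is known, and Theorem~\ref{thm:BaudoinZhang} gives
$$ g(Y^y_T)-g(y)\;=\;\sum_{k=1}^{\infty}\sum_{w\in\mathscr{W}_k}(V_w g)(y)\,S^w(X),\qquad S^w(X):=\int_{\Delta_{[0,T]}^{|w|}}dX_r^w, $$
so that the operator $\mathcal{O}:=\big(g\mapsto g(Y^\cdot_T)-g(\cdot)\big)$ on $C^\infty(\mathbb{R}^N)$ is known and equals $\sum_{w}S^w(X)\,V_w$. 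The target is the finite collection $\{S^w(X):|w|\le L\}$.

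The crucial step is to \emph{grade} this identity by a scaling parameter so that only finitely many levels survive a derivative at $0$. Replacing the driver $X$ by $rX$ (equivalently, the vector fields by $rV_1,\dots,rV_d$) multiplies the $k$-th level of the expansion by $r^k$ and the Picard remainder by $r^{L+1}$; hence, for each fixed $k\le L$, differentiating $k$ times in $r$ and evaluating at $r=0$ annihilates the remainder together with all levels $\neq k$ and isolates the \emph{level-$k$ operator}
$$ \mathcal{O}_k\;:=\;\sum_{w\in\mathscr{W}_k}S^w(X)\,V_w\;:\;C^\infty(\mathbb{R}^N)\longrightarrow C^\infty(\mathbb{R}^N), $$
which is thereby recovered from the data (applying the above to all $g$ and all $y$). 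This is the reparametrisation trick alluded to in Remark~\ref{rmk:TaylorConv}: convergence of the Taylor series is not needed, only that the Picard remainder has a $k$-fold zero in $r$.

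Finally one recovers the individual coefficients from $\mathcal{O}_k$. By Proposition~\ref{prop:LinIndep}, the linear independence of the order-$m$ tree-like vector fields, assumed for all $m\le L$, makes $\{V_w:w\in\mathscr{W}_m\}$ linearly independent as operators on $C^\infty(\mathbb{R}^N)$; in particular the representation $\mathcal{O}_k=\sum_{|w|=k}c_w V_w$ has unique coefficients, so necessarily $c_w=S^w(X)$. To turn this into an explicit reconstruction, choose test functions $g_1,\dots,g_{d^k}$ and points $y_1,\dots,y_{d^k}$ for which the $d^k\times d^k$ matrix $\big[(V_w g_a)(y_a)\big]_{w,a}$ is invertible; such a choice exists exactly because linear independence of the $V_w$ means that no nonzero vector $(c_w)_{|w|=k}$ is killed by every functional $(c_w)\mapsto\sum_{|w|=k}c_w(V_w g)(y)$, and one may take the $y_a$ inside the open image of a flow map of \eqref{eq:SigDE} (a diffeomorphism of $\mathbb{R}^N$). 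Inverting this linear system computes all $S^w(X)$ with $|w|=k$; running $k=1,\dots,L$ proves the theorem.

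I expect the main obstacle to be the second step: cleanly separating the finitely many relevant levels and the Picard remainder from the single known object $\mathcal{O}$, and making the scaling/differentiation argument rigorous under the path regularity in force, so that the level-$k$ operators $\mathcal{O}_k$ are genuinely determined by the data. Once each $\mathcal{O}_k$ is in hand, the concluding linear algebra follows immediately from Proposition~\ref{prop:LinIndep}.
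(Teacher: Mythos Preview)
Your approach is essentially the paper's: scale the vector fields by a parameter $r$, apply the Taylor expansion of Theorem~\ref{thm:BaudoinZhang} to obtain a power series in $r$, differentiate $m$ times at $r=0$ to isolate the level-$m$ sum $\sum_{w\in\mathscr{W}_m}(V_wg)(\eta)\,S^w(X)$, invoke Proposition~\ref{prop:LinIndep}, and then solve a finite linear system for the $S^w(X)$.

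The one step the paper makes explicit and you leave as your ``main obstacle'' is how the $r$-scaled family is actually obtained from the given data $(Y^y)_{y\in\mathbb{R}^N}$. The paper does not treat the $r$-scaling as a formal device on the operator $\mathcal{O}$; it introduces the auxiliary CDE $Y^{\eta,r}_t=\eta+\sum_i\int_0^t rV_i(Y^{\eta,r}_s)\,dX^i_s$ and asserts the identity $Y^{\eta,r}_t=rY^{\eta/r}_t$ for $r\neq 0$, so that each $Y^{\eta,r}$ is literally a rescaled member of the original family and the $r$-derivatives at $0$ are computable from the data. This is precisely the bridge you were missing between ``replace $X$ by $rX$'' and ``recovered from the data''. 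A minor cosmetic difference: the paper fixes a single $g\in C^\infty(\mathbb{R}^N)$ and varies only the initial points $\eta_1,\dots,\eta_{d^m}$ to produce the invertible $d^m\times d^m$ system, whereas you vary both test functions and base points; either works once Proposition~\ref{prop:LinIndep} is in hand.
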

\begin{proof}
We start by slightly modifying the differential equation \eqref{eq:SigDE}. In particular, we let $\eta \in \mathbb{R}^N$ and $r \in \mathbb{R}$ and consider the differential equation
$$ Y_t^{\eta,r}  = \eta + \sum_{i=1}^d \int_0^t r V_i(Y_s^{\eta,r}) dX_s^i \quad \textrm{for } t \in [0,T]. $$
We immediately see that for every $\eta \in \mathbb{R}^N$ and $r \neq 0$ it holds that $Y_t^{\eta, r} = rY_t^{\frac{\eta}{r}}$, so each solution $Y^{\eta, r}$ (for $r \neq 0$) can indeed be obtained from the collection of solutions $(Y^y)_{y \in \mathbb{R}^N}$ to the differential equation \eqref{eq:SigDE} under appropriate scalar-multiplication with $r$. We of course also have the trivial correspondence $Y^{\eta, 0} \equiv \eta$. \\

Now, let $g \in C^{\infty}(\mathbb{R}^N)$, and define the $r$-scaled vector fields $\tilde{V}_1, \dots, \tilde{V}_d: \mathbb{R}^N \rightarrow \mathbb{R}^N$ by
$$\tilde{V}_i(x) = r V_i(x) \quad \textrm{for every } x \in \mathbb{R}^N. $$
Then it is easily verified for any $w = (w_1, \dots, w_k) \in \mathscr{W}_k$ that $ \tilde{V}_w g = r^k  V_w g$, so in particular Theorem \ref{thm:BaudoinZhang} gives the Taylor expansion
$$ g(Y_t^{\eta, r}) = g(\eta) + \sum_{k=1}^{\infty} r^k \sum_{w \in \mathscr{W}_k} V_w g(\eta) \int_{\Delta_{[0,t]}^k} dX_r^w\quad \textrm{for every } t \in [0,T].$$
For any $m\in \mathbb{N}$ we can thus take the $m$'th derivative with respect to $r$, and evaluate in $r =0$, in order to obtain that
$$ \frac{d^m}{dr^m}(g(Y_t^{\eta, r}))|_{r=0} = \sum_{w \in \mathscr{W}_m} V_w g(\eta) \int_{\Delta_{[0,t]}^m} dX_r^w \quad \textrm{for every } t \in [0,T]. $$
We now obtain from Proposition \ref{prop:LinIndep} that whenever $m \leq L$, the operators $\{V_w ~|~ w \in \mathscr{W}_m\}$ are linearly independent, due to our assumption of linear independence of the tree-like vector fields up to order $L$. Hence, choosing some appropriate distinct initial values $\eta_1, \dots, \eta_{d^m}$ (noting that $|\mathscr{W}_m| = d^m$), will by linear independence allow us to consider the above expression as a system of linear equations which is uniquely solvable with respect to the iterated integrals of $X$. Solving this linear equation system will then allow us to express all elements of the $m$'th signature component of $X$, in terms of the solutions $Y^{\eta_1, r}, \dots, Y^{\eta_{d^m}, r}$ for $r \in \mathbb{R}$, concluding the proof.
\end{proof}
The result can of course, among other examples, be applied to the randomized nested exponential vector fields from Section \ref{subsec:NestedExpo}, i.e., the vector fields given by
$$ V_i(x) = \exp(A_i \exp(D_i x)) \quad \text{for } x \in \mathbb{R}^N,$$
with the matrices $A_1, \dots, A_d$ and diagonal matrices $D_1, \dots, D_d$ being randomly generated by sampling the entries independently according to a distribution which is absolutely continuous with respect to the Lebesgue measure. In this case Theorem \ref{thm:sigreconRN} states that if $N \geq L-1$, then the signature components up to order $L$ of a path $X$ can be almost surely uniquely reconstructed from the randomized signatures of $X$ with depth two and exponential activation function.
\subsection{Signature reconstruction on Lie groups}
\label{subsec:sigreconstructLie}
Consider the CDE \eqref{eq:LieDE} from the Lie group case. We wish to extend the signature reconstruction result from Theorem \ref{thm:sigreconRN} to this setting. Letting $G$ be a Lie group of dimension $N$, and letting $V_1, \dots, V_d: G \rightarrow TG$ be smooth vector fields, Proposition \ref{prop:LinIndep} (linear independence of tree-like vector fields) should be possible to extend to the Lie group setting. In this case we can utilize the Taylor expansion from Theorem \ref{thm:BaudoinZhangLie} as before.
\begin{thm}[Signature reconstruction on $G$]
\label{thm:sigreconLie}
Fix some $N,L \in \mathbb{N}$, let $V_1, \dots, V_d: G \rightarrow TG$ be smooth vector fields, and suppose that for all fixed $m \leq L$, the associated tree-like vector fields
$$ \{ V_{\tau}: G \rightarrow TG ~|~ \tau \in \mathbb{T}_w, ~ w \in \mathscr{W}_m \}$$
are linearly independent. Consider for each $\zeta \in G$ and $r \in \mathbb{R}$, the $r$-scaled CDE
\begin{equation} 
Z_t^{\zeta, r} = \zeta + \sum_{i=1}^d \int_0^t rV_i(Z_s^{\zeta, r}) dX_s^i \quad \text{for every } t \in [0,T]
\end{equation}
from \eqref{eq:LieDE}. Then the signature components of $S(X)$ up to order $L$ can be uniquely reconstructed from the collection of solutions $(Z^{\zeta, r})_{\zeta \in G, r \in \mathbb{R}}$.
\end{thm}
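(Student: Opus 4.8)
The plan is to follow the proof of Theorem~\ref{thm:sigreconRN} line by line, the one genuine structural difference being that the scaling reduction $Y^{\eta,r}=rY^{\eta/r}$ used there has no counterpart on a Lie group (there is no scalar action on $G$), which is precisely why the family of solutions must now be indexed by $(\zeta,r)\in G\times\mathbb{R}$ rather than by $\zeta$ alone. First I would fix $g\in C^\infty(G)$ and note that the $r$-scaled vector fields $\tilde V_i:=rV_i$ are still smooth, so $Z^{\zeta,r}$ is a well-defined $G$-valued solution, and that the iterated operators obey $\tilde V_w=r^{|w|}V_w$ on $C^\infty(G)$ (immediate from $V_w=\mathscr{D}_{V_{w_1}}\circ\cdots\circ\mathscr{D}_{V_{w_m}}$, each factor scaling linearly). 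Plugging this into the Lie-group Taylor expansion of Theorem~\ref{thm:BaudoinZhangLie} gives
\[
g(Z_t^{\zeta,r}) = g(\zeta) + \sum_{k=1}^{\infty} r^k \sum_{w\in\mathscr{W}_k} V_w g(\zeta) \int_{\Delta_{[0,t]}^k} dX_r^w \qquad (t\in[0,T]),
\]
and extracting the coefficient of $r^m$ (equivalently $\tfrac{1}{m!}\tfrac{d^m}{dr^m}|_{r=0}$, with the Picard remainder dropping out exactly as in Remark~\ref{rmk:TaylorConv}) isolates
\[
\frac{1}{m!}\frac{d^m}{dr^m}\bigl(g(Z_t^{\zeta,r})\bigr)\Big|_{r=0} = \sum_{w\in\mathscr{W}_m} V_w g(\zeta) \int_{\Delta_{[0,t]}^m} dX_r^w .
\]
The left-hand side is a quantity computable from the given data $(Z^{\zeta,r})_{\zeta\in G,\,r\in\mathbb{R}}$, being a derivative in the scalar parameter $r$ of $g$ along known solution curves.

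Next I would invoke the Lie-group analogue of Proposition~\ref{prop:LinIndep}: under the hypothesis that, for every $m\le L$, the tree-like vector fields $\{V_\tau:G\to TG\mid \tau\in\mathbb{T}_w,\ w\in\mathscr{W}_m\}$ are linearly independent, so too are the operators $\{V_w:C^\infty(G)\to C^\infty(G)\mid w\in\mathscr{W}_m\}$. Granting this, the vectors $\bigl(V_w g(\zeta)\bigr)_{w\in\mathscr{W}_m}\in\mathbb{R}^{\mathscr{W}_m}$, as $g$ ranges over $C^\infty(G)$ and $\zeta$ over $G$, span all of $\mathbb{R}^{\mathscr{W}_m}$ — otherwise a nonzero tuple $(c_w)_w$ orthogonal to all of them would give $\sum_w c_w V_w=0$. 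Hence one may select test data (finitely many functions $g_j\in C^\infty(G)$ and points $\zeta_j\in G$, $j=1,\dots,d^m$) for which the $d^m\times d^m$ matrix $\bigl(V_w g_j(\zeta_j)\bigr)_{w,j}$ is invertible, and solving the resulting linear system expresses every iterated integral $\int_{\Delta_{[0,t]}^m} dX_r^w$, $w\in\mathscr{W}_m$, as a linear combination of the computable left-hand sides above. Doing this for each $m\le L$ and setting $t=T$ reconstructs the signature components of $S(X)_{0,T}$ up to order $L$.

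The hard part is the Lie-group version of Proposition~\ref{prop:LinIndep}, since that argument was written in the global coordinates of $\mathbb{R}^N$ (coordinate projections $\pi_i$, partial derivatives) and does not transcribe verbatim. The fix I would use is that linear independence of the operators $V_w$ may be tested locally, so one may work in a coordinate chart around a point of $G$ — or, equivalently, fix a global frame of vector fields on $G$ (for instance a basis of left-invariant fields) and re-derive in that frame both the tree expansion $V_w=\sum_{\tau^0\in\mathbb{T}_w^0}V_{\tau^0}$ of Lemma~\ref{lemma:Nicola1} (chain rule plus induction on $|w|$) and the accompanying notion of tree-like vector field; the degree-at-the-root argument that separates the first-order tree-like operators from the rest then goes through unchanged, forcing linear independence of $\{V_w\}$ from that of $\{V_\tau\}$. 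Everything else — the operator-scaling identity, the Taylor expansion, and the invertibility-of-the-system step — is then routine, and as in the $\mathbb{R}^N$ case no convergence of the Taylor series is needed because of the differentiate-and-evaluate-at-$r=0$ device.
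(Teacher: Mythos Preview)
Your proposal is correct and follows essentially the same approach as the paper's own proof: scale the vector fields by $r$, apply the Lie-group Taylor expansion of Theorem~\ref{thm:BaudoinZhangLie}, differentiate in $r$ at $r=0$ to isolate the level-$m$ terms, and then use linear independence of the operators $\{V_w\}_{w\in\mathscr{W}_m}$ (inherited from the tree-like vector fields via a Lie-group analogue of Proposition~\ref{prop:LinIndep}) to invert a linear system for the iterated integrals. If anything you are slightly more careful than the paper --- you track the $1/m!$ factor, allow the test function $g$ to vary alongside $\zeta$ when building the invertible system, and actually sketch how to transport Proposition~\ref{prop:LinIndep} to $G$ via a local chart or a global frame, whereas the paper simply asserts that this last step ``should be provable in a similar fashion to the $\mathbb{R}^N$-case'' and leaves it at that.
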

\begin{rmk}
It is sufficient to consider $r \in (-\varepsilon, \varepsilon)$ for some $\varepsilon > 0$, as we are only interested in choices of $r$ close to zero.
\demo
\end{rmk}
\begin{proof}
Write $\tilde{V}_i(x) = rV_i(x)$ for every $i \in \{1, \dots, d\}$ and $x \in G$, and consider some $g \in C^{\infty}(G)$ non-constant. Then it can easily be verified for any word $w = (w_1, \dots, w_k) \in \mathscr{W}_k$ that $ \tilde{V}_w g = r^k V_wg$, so in particular we get the Taylor expansion
\begin{align*}
g(Z_t^{\zeta, r}) = g(\zeta) + \sum_{k=1}^{\infty} r^k \sum_{w \in \mathscr{W}_k} V_w g(\zeta) \int_{\Delta_{[0,t]}^k} dX_r^w \quad \textrm{for every } t \in [0,T],
\end{align*}
from Theorem \ref{thm:BaudoinZhangLie}. For any $m \in \mathbb{N}$ we can thus  take the $m$'th derivative with respect to $r$ and evaluate in $r=0$, in order to obtain that
$$ \frac{d^m}{dr^m} (g(Z_t^{\zeta, r}))|_{r=0} = \sum_{w \in \mathscr{W}_m} V_w g(\zeta) \int_{\Delta_{[0,t]}^k} dX_r^w \quad \textrm{for every } t \in [0,T].$$
Under a similar claim of linear independence of the operators $\{V_w ~|~ w \in \mathscr{W}_m\}$ whenever $m \leq L$ as in Proposition \ref{prop:LinIndep}, we can choose appropriate distinct initial values $\zeta_1, \dots, \zeta_{d^m}$, which allow us to consider the above expression as a linear equation system which is uniquely solvable with respect to the iterated integrals of $X$. Solving this linear equation system will allow us to express all elements of the $m$'th signature component of $X$, in terms of the solutions $Z^{\zeta_1, r}, \dots, Z^{\zeta_{d^m}, r}$ for $r \in \mathbb{R}$ under $g$. Hence all signature components of $S(X)$ can be reconstructed from the solutions $(Z^{\zeta, r})_{\zeta \in G, r \in \mathbb{R}}$, as desired. The claim of linear independence should be provable in a similar fashion to the $\mathbb{R}^N$-case covered in Section \ref{sec:linindep} - we will however not go into the details of this here.
\end{proof}
\newpage
\newpage
\section{Further perspectives}

The findings and discussions presented in this paper lay the groundwork for future exploration in the understanding of \emph{randomized signatures}, of their expressive power and stability. Some promising directions for future research and practical applications can be identified:

\begin{itemize}
    \item \textbf{Log-Signature:} In this work, our primary goal was reconstructing the signature. A significant challenge we encountered stemmed from inherent, unavoidable algebraic relations between generic vector fields (\emph{cf.} Remark \ref{rmk:Dimensions}), which fundamentally arise from the properties of the algebra of words.
    These relations are closely tied to those of the signature entries themselves, which are not independent but exhibit similar algebraic dependencies. It is known (\cite{LyonsRoughDE, morrill2021neural}) that such dependencies can be eliminated by shifting the focus from the signature to the \emph{log-signature}. This raises a natural question about the recovery of this lossless compression.
    Would such an approach resolve the algebraic complexities encountered so far, or would it introduce new challenges arising from the Lie-algebraic nature of the log-signature? We leave this question open for further investigation.

    \item \textbf{The Role of Depth:} We have demonstrated that the notion of depth in vector fields—analogous to the depth of neural networks—serves as a powerful tool in obtaining explicit results. This depth provides a “leveled” structure, reminiscent of that found in tensor algebras.
    It seems plausible that greater depth corresponds to a higher degree of independence, thereby enabling the recovery of more signature terms for a fixed hidden dimension. We believe that a thorough understanding of this phenomenon would be of significant interest and value for future research.

    \item \textbf{Alternative approaches to signature reconstruction:} 
    It might be possible to approach the question of signature reconstruction from an entirely different direction than the one applied in this project, by avoiding the use of Taylor expansions, and thus circumventing the challenges of linear independence between vector fields. Some alternative approaches to signature reconstruction that were explored in the making of this paper, but which were ultimately abandoned, were approaches using Baire's category theorem or probabilistic arguments. A different line of inquiry might be looking into more direct constructions using Fourier coefficients.

\end{itemize}
\newpage
\section{Acknowledgements}
MG and NMC are supported by the EPSRC Centre for Doctoral
Training in Mathematics of Random Systems: Analysis, Modelling and Simulation (EP/S023925/1).\\

The authors would like to thank Cristopher Salvi for his feedback on an earlier version of this work, and for suggesting the investigation of randomized signatures with depth. The authors would also like to thank Thomas Mikosch for facilitating the joint master thesis of MG between University of Copenhagen and ETH Zürich, on which parts of this paper are loosely based.

\bibliographystyle{apalike}
\bibliography{references}
\newpage
\appendix
\section{Appendix: Vector fields}
\label{appendix:vectorfields}
\subsection{Tangent spaces and tangent mappings}
Let $M$ be a smooth manifold, such that $M$ is locally homeomorphic to $\mathbb{R}^n$ for some fixed $n \in \mathbb{N}$. Denote
$$ C^{\infty}(M) := \{ f: M \rightarrow \mathbb{R} ~|~ f \textrm{ is smooth} \}. $$ 
For any two maps $f,g \in C^{\infty}(M)$ we define addition, scalar multiplication and multiplication pointwise in $C^{\infty}(M)$ by
\begin{align*}
(f+g)(x) &= f(x) + g(x),\\
(\alpha f)(x) &= \alpha f(x), \\
(f \cdot g)(x) &= f(x)g(x),
\end{align*}
for all $x \in M$, $\alpha \in \mathbb{R}$. Equipped with these operations, $C^{\infty}(M)$ forms an algebra (for which $+$ and $\cdot$ are particularly commutative by construction).\\

For each $x \in M$ we associate a \emph{tangent space}
$$T_xM = \left\{ X_x: C^{\infty}(M) \rightarrow \mathbb{R} \textrm{ linear} ~\Big{|}~ \begin{array}{c}
X_x(f \cdot g) = X_x(f)g(x) + f(x) X_x(g), \\
X_x \textrm{ satisfies the germ condition}
\end{array} \right\}, $$
where the germ condition (referring to the formal definition of tangent spaces in terms of \emph{germs}) states that for all $f,g \in C^{\infty}(M)$:
$$\textrm{If } \exists U \subseteq M \textrm{ open neighbourhood of } x \textrm{ such that } f|_U = g|_U \textrm{ then } X_x(f) = X_x(g).$$
Suppose that $(U,u)$ is a chart around $x$, such that $U$ is open with $x \in U$, and $u: U \rightarrow u(U) \subseteq \mathbb{R}^n$ is a homeomorphism. Denoting the coordinate functions of $u$ by $u^1, \dots, u^n$, it is then known that the tangent space $T_xM$ has canonical basis vectors $\left( \frac{\partial}{\partial u^i} \right)_{i=1}^n$, so in particular $T_xM$ is linearly isomorphic to $\mathbb{R}^n$ for each $x \in M$.
\begin{defin}[Tangent mapping]
Let $M$ and $N$ be smooth manifolds, and let $\varphi: M \rightarrow N$ be a smooth mapping. For every $x \in M$ we define the \emph{tangent mapping induced by $\varphi$} as
$$ T_x\varphi: T_xM \rightarrow T_{\varphi(x)}N \quad \textrm{given by} \quad X_x \mapsto X_x(\cdot \circ \varphi). $$
\end{defin}
\begin{prop}[Properties of tangent mappings]
\label{prop:propertiestangent}
Let $M$, $N$ and $P$ be smooth manifolds and consider smooth mappings $\varphi: M \rightarrow N$ and $\psi: N \rightarrow P$. For every fixed $x \in M$ it holds that
\begin{itemize}
\item[\textbf{\emph{(i)}}] The tangent mapping $T_x\varphi: T_xM \rightarrow T_{\varphi(x)}N$ is linear;
\item[\textbf{\emph{(ii)}}] $T_x(\psi \circ \varphi) = T_{\varphi(x)}\psi \circ T_x\varphi$;
\item[\textbf{\emph{(iii)}}] $T_x\textrm{id}_M = \textrm{id}_{T_xM}$ where $\textrm{id}$ denotes the identity map on the indicated space.
\end{itemize}
\end{prop}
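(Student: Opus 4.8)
The plan is to unwind everything directly from the definition $T_x\varphi(X_x) = X_x(\,\cdot\circ\varphi\,)$, testing identities against arbitrary smooth functions, but first I would confirm that $T_x\varphi$ really maps into $T_{\varphi(x)}N$. Fixing $X_x \in T_xM$, I would check that the functional $f \mapsto X_x(f\circ\varphi)$ lies in $T_{\varphi(x)}N$: it is real-valued since $f\circ\varphi \in C^\infty(M)$ (composition of smooth maps is smooth); it is $\mathbb{R}$-linear because both $X_x$ and precomposition by $\varphi$ are; it obeys the Leibniz rule at $\varphi(x)$ because $(f\cdot g)\circ\varphi = (f\circ\varphi)\cdot(g\circ\varphi)$, so applying the derivation $X_x$ and using $(f\circ\varphi)(x) = f(\varphi(x))$ produces exactly the product rule based at $\varphi(x)$; and it satisfies the germ condition because if $f$ and $g$ agree on an open neighbourhood $U$ of $\varphi(x)$ then $f\circ\varphi$ and $g\circ\varphi$ agree on $\varphi^{-1}(U)$, an open neighbourhood of $x$ by continuity of $\varphi$, whence $X_x(f\circ\varphi) = X_x(g\circ\varphi)$.

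Granting well-definedness, part (i) is immediate: for $X_x,Y_x \in T_xM$, $\alpha\in\mathbb{R}$ and any $f\in C^\infty(N)$,
$$T_x\varphi(X_x + \alpha Y_x)(f) = (X_x + \alpha Y_x)(f\circ\varphi) = X_x(f\circ\varphi) + \alpha\, Y_x(f\circ\varphi) = \big(T_x\varphi(X_x) + \alpha\, T_x\varphi(Y_x)\big)(f),$$
using that the vector space operations on $T_xM$ are the pointwise ones on $C^\infty(M)$; since $f$ is arbitrary, $T_x\varphi$ is linear.

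For part (ii) I would again test on an arbitrary $h\in C^\infty(P)$ and use associativity of composition, $h\circ(\psi\circ\varphi) = (h\circ\psi)\circ\varphi$:
$$T_x(\psi\circ\varphi)(X_x)(h) = X_x\big(h\circ(\psi\circ\varphi)\big) = X_x\big((h\circ\psi)\circ\varphi\big) = T_x\varphi(X_x)(h\circ\psi) = \big(T_{\varphi(x)}\psi\circ T_x\varphi\big)(X_x)(h),$$
the last step being the definition of $T_{\varphi(x)}\psi$ evaluated at the vector $T_x\varphi(X_x)\in T_{\varphi(x)}N$; as $h$ and $X_x$ are arbitrary this gives the asserted equality of maps $T_xM \to T_{\psi(\varphi(x))}P$. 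Part (iii) is similar but shorter: $f\circ\mathrm{id}_M = f$ for every $f\in C^\infty(M)$, so $T_x\mathrm{id}_M(X_x)(f) = X_x(f)$ for all $X_x$ and $f$, i.e.\ $T_x\mathrm{id}_M = \mathrm{id}_{T_xM}$.

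The whole argument is routine; I do not expect a genuine obstacle, but the one place that deserves a moment's care is the well-definedness check — in particular verifying the germ condition, where one must use continuity of $\varphi$ to pull an open neighbourhood of $\varphi(x)$ back to one of $x$, and the Leibniz rule, where one must keep track of the base point over which the evaluation functionals live.
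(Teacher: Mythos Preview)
Your proposal is correct and complete; the paper itself states this proposition without proof (it is a standard background result in the appendix), so there is nothing to compare against. Your unwinding of the definition $T_x\varphi(X_x)=X_x(\,\cdot\circ\varphi\,)$, including the well-definedness check for the Leibniz rule and the germ condition, is exactly the expected argument.
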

We especially notice that if
\begin{itemize}
\item $M$ is associated to $\mathbb{R}^m$ and $N$ is associated to $\mathbb{R}^n$;
\item $(U,u)$ is a chart on $M$ with $x \in U$ and $(V,v)$ is a chart on $N$ with $\varphi(x) \in V$;
\end{itemize}
we can consider the canonical bases $\left( \frac{\partial}{\partial u_i}|_x \right)_{i=1}^m$ for $T_xM$ and $\left(\frac{\partial}{\partial v_j}|_{\varphi(x)} \right)_{j=1}^n$ for $T_{\varphi(x)}N$. Hence, linearity of the tangent mapping $T_x\varphi: T_xM \rightarrow T_{\varphi(x)}N$ implies that it can be represented by a real-valued $n \times m$-matrix given with respect to the canonical bases.
\begin{prop}
Let $M$ and $N$ be smooth manifolds, fix some $x \in M$ and consider a smooth mapping $\varphi: M\rightarrow N$. Then the real-valued matrix representing the linear map $T_x\varphi: T_xM \rightarrow T_{\varphi(x)}N$ with respect to the canonical choice of basis vectors is precisely given by the Jacobi matrix
$$D(v \circ \varphi \circ u^{-1})(u(x)) \in \mathbb{R}^{n \times m}$$
for any choice of charts $(U,u)$ in $M$ and $(V,v)$ in $N$ with $x \in U$ and $\varphi(x) \in V$.\footnote{Note here that $v \circ \varphi \circ u^{-1}: \underbrace{u(U)}_{\subseteq \mathbb{R}^m} \rightarrow \mathbb{R}^n$ is a well-defined map between Euclidean spaces.}
\end{prop}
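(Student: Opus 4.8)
The plan is to compute the action of $T_x\varphi$ on each canonical basis vector $\tfrac{\partial}{\partial u_i}\big|_x$ of $T_xM$ and read off its coordinates in the canonical basis $\bigl(\tfrac{\partial}{\partial v_j}\big|_{\varphi(x)}\bigr)_{j=1}^n$ of $T_{\varphi(x)}N$; matching these coordinates against the definition of the Jacobi matrix then gives the claim.

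First I would record the elementary lemma that for any $Y \in T_{\varphi(x)}N$ one has $Y = \sum_{j=1}^n Y(v^j)\,\tfrac{\partial}{\partial v_j}\big|_{\varphi(x)}$, where $v^1,\dots,v^n$ are the coordinate functions of the chart $(V,v)$ — that is, the coordinates of a tangent vector relative to a coordinate basis are obtained by evaluating the vector on the coordinate functions. This follows by expanding $Y = \sum_j c_j \tfrac{\partial}{\partial v_j}\big|_{\varphi(x)}$ in the basis, applying both sides to $v^k$, and using $\tfrac{\partial}{\partial v_j}\big|_{\varphi(x)}(v^k) = \delta_{jk}$. Note that $v^j \circ \varphi \in C^{\infty}(M)$ by smoothness of $\varphi$ (so Proposition \ref{prop:propertiestangent}(ii) applies), which is what lets us feed $v^j$ into $T_x\varphi\bigl(\tfrac{\partial}{\partial u_i}\big|_x\bigr)$.

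Next I would apply this lemma to $Y = T_x\varphi\bigl(\tfrac{\partial}{\partial u_i}\big|_x\bigr)$. By the definition of the tangent mapping, $T_x\varphi(X_x) = X_x(\,\cdot\,\circ\,\varphi)$, so the $(j,i)$-entry of the matrix representing $T_x\varphi$ is
$$\Bigl(T_x\varphi\bigl(\tfrac{\partial}{\partial u_i}\big|_x\bigr)\Bigr)(v^j) \;=\; \tfrac{\partial}{\partial u_i}\big|_x\,(v^j \circ \varphi).$$
Finally I would unwind the definition of the coordinate derivation $\tfrac{\partial}{\partial u_i}\big|_x$: acting on $g \in C^{\infty}(M)$ (which is well defined via the germ at $x$), it returns the $i$-th partial derivative of $g \circ u^{-1}$ evaluated at $u(x)$. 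Taking $g = v^j \circ \varphi$, the quantity above equals the $i$-th partial derivative of $v^j \circ \varphi \circ u^{-1}$ at $u(x)$. Since $v^j \circ \varphi \circ u^{-1}$ is precisely the $j$-th component function of $v \circ \varphi \circ u^{-1} \colon u(U) \to \mathbb{R}^n$, this is exactly the $(j,i)$-entry of $D(v \circ \varphi \circ u^{-1})(u(x))$, proving the matrices agree.

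There is no deep obstacle here; the only genuine ingredient is the preliminary lemma recovering a tangent vector's coordinates by evaluation on coordinate functions, and the rest is a careful chase through the definitions. The point requiring the most attention is bookkeeping with the germ condition and with index conventions (row index $j \in \{1,\dots,n\}$ from $N$, column index $i \in \{1,\dots,m\}$ from $M$), to make sure the transpose does not creep in and that all the compositions land in the spaces where the relevant objects are defined.
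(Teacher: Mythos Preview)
The paper does not actually give a proof of this proposition: it is stated in the appendix as a standard fact from differential geometry, followed immediately by the displayed consequence
\[
T_x\varphi\Bigl(\tfrac{\partial}{\partial u^i}\big|_x\Bigr)=\sum_{j=1}^n \tfrac{\partial (v^j\circ\varphi\circ u^{-1})}{\partial x^i}(u(x))\,\tfrac{\partial}{\partial v^j}\big|_{\varphi(x)},
\]
with no argument supplied. Your proposal is exactly the standard proof of this fact and is correct as written; in particular your preliminary lemma $Y=\sum_j Y(v^j)\,\tfrac{\partial}{\partial v_j}\big|_{\varphi(x)}$ together with the definition $T_x\varphi(X_x)=X_x(\cdot\circ\varphi)$ and the definition of $\tfrac{\partial}{\partial u_i}\big|_x$ is precisely what is needed, and your attention to the $(j,i)$ indexing is on point.
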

In particular this means that we obtain the linear combinations:
$$ T_x\varphi\left( \frac{\partial}{\partial u^i}|_x \right) = \sum_{j=1}^n \frac{\partial (v^j \circ \varphi \circ u^{-1})}{\partial x^i}(u(x)) \frac{\partial}{\partial v^j}|_{\varphi(x)} \quad \textrm{for } i=1, \dots, m. $$
\ \\
We denote by $TM = \{ (x,X_x) ~|~ x \in M, X_x \in T_xM\}$ the \emph{tangent bundle} on $M$. For any smooth mapping $\varphi: M \rightarrow N$ considered as above, we can associate the total mapping $T\varphi: TM \rightarrow TN$ given by $(x, X_x) \mapsto (\varphi(x), T_x\varphi(X_x))$.
\subsection{Vector fields and pushforwards}
\begin{defin}[Vector field]
A \emph{vector field} on $M$ is a map $X: M \rightarrow TM$ which associates to each $x \in M$ a tangent vector $X(x) \in T_xM$. I.e., the map $X$ is a vector field if it satisfies the identity
$$ \pi_M \circ X = \textrm{id}_M $$
where $\pi_M: TM \rightarrow M$ is the projection mapping $(x, X_x) \mapsto x$ and $\textrm{id}_M$ is the identity mapping on $M$.
\end{defin}
The set of all vector fields on $M$ is denoted
$$ \mathfrak{X}(M) := \{ X: M \rightarrow TM ~|~ X \textrm{ is a vector field}\}, $$
and $\mathfrak{X}(M)$ forms a vector space when equipped with pointwise addition and scalar multiplication.
\begin{prop}[The map $Xf$]
\label{prop:mapXf}
Let $X \in \mathfrak{X}(M)$ be a vector field and let $f \in C^{\infty}(M)$ be a smooth function. Then the map
$$ Xf: M \rightarrow \mathbb{R} \quad \textrm{given by} \quad Xf(x) := X(x)(f) $$
is smooth, i.e., $Xf \in C^{\infty}(M)$.
\end{prop}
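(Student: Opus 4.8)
The plan is to exploit that smoothness of a real-valued function on $M$ is a \emph{local} property: it suffices to show that $Xf$ is smooth on some open neighbourhood of each point of $M$. So I would fix an arbitrary $p \in M$ and a chart $(U,u)$ with $p \in U$, coordinate functions $u^1, \dots, u^n : U \to \mathbb{R}$, and the induced basis $\bigl(\tfrac{\partial}{\partial u^i}\big|_x\bigr)_{i=1}^n$ of $T_xM$ for every $x \in U$. Writing
\[
X(x) = \sum_{i=1}^n a^i(x)\,\frac{\partial}{\partial u^i}\Big|_x \qquad (x \in U)
\]
defines component functions $a^i : U \to \mathbb{R}$; these satisfy $a^i(x) = X(x)(u^i)$ once one makes sense of applying $X(x)$ to the locally defined $u^i$ via the germ condition (or by replacing $u^i$ with a globally defined smooth function agreeing with it near $x$). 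That each $a^i$ is smooth on $U$ is exactly the requirement that $X$ be a smooth section of $TM$, i.e. that $X \in \mathfrak{X}(M)$, and I would record this convention at the outset.

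Next I would expand $Xf$ in these coordinates. Using linearity of $X(x)$ on $C^\infty(M)$ together with the defining formula $\frac{\partial}{\partial u^i}\big|_x(f) = \frac{\partial (f\circ u^{-1})}{\partial x^i}\bigl(u(x)\bigr)$ for the basis derivations, one obtains
\[
Xf(x) \;=\; X(x)(f) \;=\; \sum_{i=1}^n a^i(x)\,\frac{\partial (f \circ u^{-1})}{\partial x^i}\bigl(u(x)\bigr), \qquad x \in U .
\]
On the right-hand side the $a^i$ are smooth on $U$, the function $f \circ u^{-1}$ is smooth on the open set $u(U) \subseteq \mathbb{R}^n$ as a composition of smooth maps, hence so are its partial derivatives, and $u : U \to u(U)$ is smooth. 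Therefore each summand, and hence $Xf|_U$, is smooth. Since $p$ was arbitrary, $Xf \in C^\infty(M)$.

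I do not expect a genuine obstacle: this is the standard argument, and the only care needed is bookkeeping around the fact that the $u^i$ and the representative $f\circ u^{-1}$ live only on $U$, which is handled by the germ condition built into the definition of $T_xM$ (together with a bump-function argument if one insists on feeding $X(x)$ only globally defined functions). The single point worth flagging explicitly is that the statement presupposes $X$ to be a \emph{smooth} vector field — equivalently, that its component functions in every chart are smooth — since without this $Xf$ need not even be continuous; I would fold this into the standing meaning of $\mathfrak{X}(M)$ before beginning the proof.
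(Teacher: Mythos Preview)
Your argument is correct and is precisely the standard local-coordinate proof of this fact. Note, however, that the paper does not actually supply a proof of this proposition: it is stated in the appendix as a standard background result without proof, so there is nothing to compare against beyond observing that your approach is the conventional one.
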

For a vector field $X: M \rightarrow TM$ in $\mathfrak{X}(M)$, we define the map 
$$\mathcal{D}_X: C^{\infty}(M) \rightarrow C^{\infty}(M) \quad \textrm{given by} \quad f \mapsto Xf \quad \textrm{for all } x \in M, $$
and it can then be shown that $\mathscr{D}_X$ is a \emph{derivation}. Indeed, there is a unique correspondence between a vector field $X$ and its derivation, in the sense that the map $X \mapsto \mathscr{D}_X$ is bijective.\\

We will now define the notion of a \emph{pushforward} of a vector field.
\begin{defin}[$\varphi$-related vector fields]
Let $M$ and $N$ be smooth manifolds, and consider a smooth mapping $\varphi: M \rightarrow N$. If $X \in \mathfrak{X}(M)$ and $Y \in \mathfrak{X}(N)$ are vector fields on $M$ and $N$, respectively, we say that $X$ and $Y$ are \emph{$\varphi$-related} if
$$ T\varphi \circ X = Y \circ \varphi. $$
\end{defin}
\begin{prop}[Pushforward of a vector field]
\label{prop:pushforward}
Let $M$ and $N$ be smooth manifolds, and suppose that $\varphi: M \rightarrow N$ is a smooth diffeomorphism. Then for every $X \in \mathfrak{X}(M)$ there exists a unique vector field $Y \in \mathfrak{X}(N)$ such that $X$ and $Y$ are $\varphi$-related. We write $\varphi_*X := Y$ and call $\varphi_*X$ the \emph{pushforward} of $X$ by $\varphi$. In particular, the pushforward is explicitly given by
$$ \varphi_*X = T\varphi \circ X \circ \varphi^{-1}. $$
\end{prop}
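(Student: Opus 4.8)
The plan is to exhibit the required $Y$ explicitly as the composite $T\varphi \circ X \circ \varphi^{-1}$, and then to verify in order that it is a vector field on $N$, that it is $\varphi$-related to $X$, and that it is the only such vector field.

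First I would check that $Y := T\varphi \circ X \circ \varphi^{-1}$ is a well-defined element of $\mathfrak{X}(N)$. Given $q \in N$, put $p := \varphi^{-1}(q)$; since $X \in \mathfrak{X}(M)$ we have $X(p) \in T_pM$, and by Proposition~\ref{prop:propertiestangent} the tangent mapping $T_p\varphi$ sends $T_pM$ linearly into $T_{\varphi(p)}N = T_qN$, so that $Y(q) = T_p\varphi(X(p)) \in T_qN$; this is precisely the defining condition $\pi_N \circ Y = \textrm{id}_N$ of a section. Smoothness of $Y$ is the one structural ingredient: the total tangent map $T\varphi : TM \to TN$ is smooth because $\varphi$ is smooth, $X : M \to TM$ is smooth because $X$ is a smooth vector field, and $\varphi^{-1} : N \to M$ is smooth because $\varphi$ is a diffeomorphism; the composite of smooth maps is smooth, so $Y \in \mathfrak{X}(N)$.

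Next I would verify $\varphi$-relatedness, i.e. the identity $T\varphi \circ X = Y \circ \varphi$, which is immediate from the definition of $Y$ and $\varphi^{-1}\circ\varphi = \textrm{id}_M$: indeed $Y \circ \varphi = T\varphi \circ X \circ \varphi^{-1} \circ \varphi = T\varphi \circ X$. For uniqueness, suppose $Y' \in \mathfrak{X}(N)$ also satisfies $T\varphi \circ X = Y' \circ \varphi$. Then $Y' \circ \varphi = T\varphi \circ X = Y \circ \varphi$, and since $\varphi$ is surjective (being a diffeomorphism) every $q \in N$ can be written $q = \varphi(p)$ for some $p \in M$, whence $Y'(q) = (Y'\circ\varphi)(p) = (Y\circ\varphi)(p) = Y(q)$. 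Thus $Y' = Y$, which gives simultaneously the uniqueness and the explicit formula $\varphi_* X = T\varphi \circ X \circ \varphi^{-1}$.

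I do not expect a genuine obstacle here: the whole argument is set-theoretic manipulation of the definitions of the tangent map, of a vector field, and of $\varphi$-relatedness, together with the linearity of $T_p\varphi$ recorded in Proposition~\ref{prop:propertiestangent}. The only point deserving care is the smoothness of the total tangent map $T\varphi : TM \to TN$, needed so that $Y$ is an honest smooth section rather than merely a set-theoretic one; at that step I would invoke the standard smooth structure on the tangent bundle introduced in the preceding subsection. It is also worth noting in passing that the diffeomorphism hypothesis is used twice in an essential way — smoothness of $\varphi^{-1}$ for the existence part, and surjectivity of $\varphi$ for the uniqueness part.
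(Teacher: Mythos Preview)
Your argument is correct and is the standard one: define $Y = T\varphi \circ X \circ \varphi^{-1}$, check it is a smooth section of $TN$, verify $\varphi$-relatedness directly, and obtain uniqueness from surjectivity of $\varphi$. The paper itself does not supply a proof of this proposition --- it is recorded in the appendix as a standard background fact --- so there is nothing to compare against; your write-up is exactly what one would expect here.
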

\subsection{Velocity vectors and integral curves}
Let as usual $M$ be a smooth manifold and let $J \subseteq \mathbb{R}$ be some open interval. We will be interested in considering smooth curves $\gamma: J \rightarrow M$, and will typically assume that $0 \in J$, such that $\gamma(0)$ denotes the \emph{starting point} of the curve.
\begin{defin}[Velocity vector]
Consider some smooth curve $\gamma: J \rightarrow M$ and fix some time $t_0 \in J$. Then, considering the tangent mapping $T_{t_0} \gamma: T_{t_0} J \rightarrow T_{\gamma(t_0)} M$, we define the \emph{velocity vector} of $\gamma$ at $t_0$ as the tangent vector
$$ \gamma'(t_0) := T_{t_0} \gamma \left( \frac{d}{dt}|_{t_0} \right) \in T_{\gamma(t_0)} M. $$
\end{defin}
We note in particular that evaluating the tangent vector $\gamma'(t_0) \in T_{\gamma(t_0)} M$ in any function $f \in C^{\infty}(M)$ yields
$$ \gamma'(t_0)(f) = T_{t_0} \gamma \left( \frac{d}{dt}|_{t_0} \right)(f) = (f \circ \gamma)'(t_0).$$
In other words, the velocity vector can be considered as a derivation which maps every $f \in C^{\infty}(M)$ to its derivative along the curve $\gamma$.\\

One can now ask, whether we, given some vector field $X: M \rightarrow TM$ can construct a curve $\gamma: J \rightarrow M$ such that $X$ precisely maps every point of the curve to its corresponding velocity vector. Such a curve will be referred to as an \emph{integral curve}.
\begin{defin}[Integral curve]
Let $M$ be a smooth manifold and let $X \in \mathfrak{X}(M)$ be a vector field. We say that a smooth curve $\gamma: J \rightarrow M$ is an \emph{integral curve of $X$} if it satisfies that
$$ \gamma'(t) = X(\gamma(t)) \quad \textrm{for every } t \in J.$$
\end{defin}
\begin{thm}[Existence and uniqueness of integral curves]
Let $X \in \mathfrak{X}(M)$ be a vector field on the smooth manifold $M$ and fix some point $x \in M$. Then there exists an open interval $J \subseteq \mathbb{R}$ with $0 \in J$ and a smooth curve $\gamma: J \rightarrow M$ such that $\gamma$ is an integral curve of $X$ with starting point $\gamma(0) = x$. If we can choose $J = \mathbb{R}$, it furthermore holds that the curve $\gamma$ is unique.
\end{thm}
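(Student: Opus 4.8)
The plan is to reduce the statement to the classical Picard--Lindel\"of existence and uniqueness theorem for ordinary differential equations in $\mathbb{R}^n$ by passing to a chart, and then to upgrade local uniqueness to the global statement via a connectedness argument.

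For existence, I would first choose a chart $(U,u)$ around $x$ with coordinate functions $u^1,\dots,u^n$ and write $X$ in coordinates as $X = \sum_{i=1}^n X^i \frac{\partial}{\partial u^i}$ on $U$, noting that each component function $X^i$ lies in $C^{\infty}(U)$ by Proposition \ref{prop:mapXf}. Transporting through $u$, the integral-curve condition $\gamma'(t) = X(\gamma(t))$ becomes the first-order system $\dot y^i(t) = (X^i \circ u^{-1})(y(t))$ for the coordinate representation $y = u \circ \gamma$; to see this equivalence I would use the identity $\gamma'(t)(g) = (g \circ \gamma)'(t)$ for $g \in C^{\infty}(M)$ together with the coordinate expansion of the velocity vector in the basis $(\frac{\partial}{\partial u^i})_i$. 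Since $X^i \circ u^{-1}$ is smooth on $u(U)$, hence locally Lipschitz, Picard--Lindel\"of yields some $\varepsilon > 0$ and a unique smooth solution $y : (-\varepsilon,\varepsilon) \to u(U)$ with $y(0) = u(x)$. Then $\gamma := u^{-1} \circ y$ is the desired integral curve of $X$ with $\gamma(0) = x$, and one takes $J := (-\varepsilon,\varepsilon)$.

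For uniqueness, suppose $\gamma_1 : J_1 \to M$ and $\gamma_2 : J_2 \to M$ are integral curves of $X$ with $\gamma_1(0) = \gamma_2(0) = x$, and consider the agreement set $A := \{ t \in J_1 \cap J_2 ~|~ \gamma_1(t) = \gamma_2(t) \}$. It contains $0$, is closed in the interval $J_1 \cap J_2$ by continuity of $\gamma_1$ and $\gamma_2$, and is open because around any $t_0 \in A$ continuity lets us restrict to a subinterval on which both curves stay inside a single chart, where they solve the same ODE system with the same value at $t_0$; the uniqueness half of Picard--Lindel\"of then forces them to coincide near $t_0$. As $J_1 \cap J_2$ is connected, $A = J_1 \cap J_2$, so any two integral curves through $x$ agree on the overlap of their domains. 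In particular, whenever $J = \mathbb{R}$ can be achieved, the integral curve is unique.

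The main obstacle --- really the only step that is not bookkeeping --- is establishing the coordinate dictionary cleanly: checking that the intrinsic equation $\gamma' = X \circ \gamma$, phrased through derivations and velocity vectors, is exactly the coordinate ODE system, and, in the openness argument for uniqueness, using continuity to shrink the time interval so that both curves lie in one chart before invoking local uniqueness. Once that is in place, both halves of the theorem follow directly from the classical $\mathbb{R}^n$ theory.
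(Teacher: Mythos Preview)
The paper does not actually supply a proof of this theorem; it appears in the appendix as a stated piece of classical background material, with no argument given. Your proposal is the standard textbook proof (local reduction to Picard--Lindel\"of via a chart, then a connectedness argument on the agreement set for uniqueness), and it is correct as written.
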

\begin{defin}[Maximal integral curve]
Let $X \in \mathfrak{X}(M)$ and let $\gamma: J \rightarrow \mathbb{R}$ be an integral curve of $X$. We say that $\gamma$ is a \emph{maximal integral curve of $X$} if it cannot be extended to a larger integral curve of $X$, i.e., if there exist no smooth curves $\tilde{\gamma}: \tilde{J} \rightarrow M$ satisfying that
\begin{enumerate}
    \item $\tilde{J} \subseteq \mathbb{R}$ is an open interval with $J \subseteq \tilde{J}$.
    \item $\tilde{\gamma}|_J = \gamma$.
    \item $\tilde{\gamma}$ is an integral curve of $X$.
\end{enumerate}
\end{defin}
\section{Lie groups and Lie algebras}
For this section it may be useful to refer to Appendix \ref{appendix:vectorfields} for conventions of notation. See \cite{Hall}, \cite{JohnLee}, \cite{Michor}, \cite{Samelson} and \cite{Warner}, for thorough introductions to Lie group theory.
\subsection{Basic definitions and relations}
\begin{defin}[Lie group]
Let $G$ be a smooth manifold, and let $\cdot: G \times G \rightarrow G$ be a smooth map on $G$. If the pair $(G, \cdot)$ constitutes a group, then we say that $(G, \cdot)$ is a \emph{Lie group}. We denote the identity element in $(G, \cdot)$ by $e$.
\end{defin}
Let $G$ be a Lie group. For any fixed $a \in G$, the left- and right-translation operators $L_a: G \rightarrow G$ and $R_a: G \rightarrow G$ are given by
$$ L_a(b) = a b \quad \textrm{and} \quad R_a(b) = b a \quad \textrm{for any } b \in G. $$
A map $\varphi: G \rightarrow H$ between two Lie groups $G$ and $H$ is called a \emph{Lie group homomorphism} if it is a smooth group homomorphism.
\begin{defin}[Lie algebra]
A vector space $\mathfrak{g}$, equipped with a bilinear operation $[\cdot, \cdot]: \mathfrak{g} \times \mathfrak{g} \rightarrow \mathfrak{g}$, satisfying the two properties
\begin{itemize}
\item[\textbf{\emph{(i)}}] $[X,Y] = - [Y,X]$;
\item[\textbf{\emph{(ii)}}] $[X, [Y,Z]] = [[X,Y], Z] + [Y, [X,Z]]$ (the Jacobi identity);
\end{itemize} 
for all $X,Y,Z \in \mathfrak{g}$ is called a \emph{Lie algebra}. The operation $[\cdot, \cdot]$ is called the \emph{Lie bracket} associated to the Lie algebra.
\end{defin}
\noindent As one would expect, a map $\phi: \mathfrak{g} \rightarrow \mathfrak{h}$ between two Lie algebras is called a \emph{Lie algebra homomorphism} if it is an algebra homomorphism between $\mathfrak{g}$ and $\mathfrak{h}$, i.e., if it is linear and satisfies the homomorphism relation
$$ \phi([X,Y]) = [\phi(X), \phi(Y)] \quad \textrm{for all } X,Y \in \mathfrak{g}.$$
\begin{prop}[Lie subalgebra]
Let $(\mathfrak{g}, [\cdot, \cdot])$ be a Lie algebra, and suppose that $\mathfrak{h} \subseteq \mathfrak{g}$ is a subspace. If $\mathfrak{h}$ is closed under the Lie bracket, i.e.,
$$ X,Y \in \mathfrak{h} \quad \Rightarrow \quad [X,Y] \in \mathfrak{h}, $$
then $(\mathfrak{h}, [\cdot, \cdot]|_{\mathfrak{h}})$ forms a Lie algebra, where $[\cdot, \cdot]|_{\mathfrak{h}}: \mathfrak{h} \times \mathfrak{h} \rightarrow \mathfrak{h}$ denotes the restriction of the Lie bracket to $\mathfrak{h}$. We call $(\mathfrak{h}, [\cdot, \cdot]|_{\mathfrak{h}})$ a \emph{Lie subalgebra} of $\mathfrak{g}$.
\end{prop}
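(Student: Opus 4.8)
The plan is to verify directly that the pair $(\mathfrak{h}, [\cdot,\cdot]|_{\mathfrak{h}})$ meets every clause in the definition of a Lie algebra, exploiting the elementary principle that each of those clauses is a \emph{universally quantified} identity and hence automatically descends from $\mathfrak{g}$ to any subset closed under the operations it involves. No delicate argument is expected; the proof is a short bookkeeping exercise.

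Concretely, I would proceed in three steps. First, since $\mathfrak{h}$ is by hypothesis a subspace of the vector space $\mathfrak{g}$, it is itself a vector space under the restricted addition and scalar multiplication, so the only genuinely new object to handle is the bracket. Second, I would observe that $[\cdot,\cdot]|_{\mathfrak{h}}$, a priori a map $\mathfrak{h}\times\mathfrak{h}\to\mathfrak{g}$, actually takes values in $\mathfrak{h}$ — this is exactly the closure hypothesis $X,Y\in\mathfrak{h}\Rightarrow[X,Y]\in\mathfrak{h}$, and it is the only place the hypothesis is invoked; bilinearity of $[\cdot,\cdot]|_{\mathfrak{h}}$ is then immediate by restricting the bilinearity of $[\cdot,\cdot]$ on $\mathfrak{g}$ to pairs drawn from the subspace $\mathfrak{h}$. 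Third, I would check axioms (i) and (ii): antisymmetry $[X,Y]=-[Y,X]$ holds for all $X,Y\in\mathfrak{g}$, in particular for all $X,Y\in\mathfrak{h}$; and the Jacobi identity $[X,[Y,Z]]=[[X,Y],Z]+[Y,[X,Z]]$ holds for all triples in $\mathfrak{g}$, hence for all triples in $\mathfrak{h}$, where one uses once more that each iterated bracket of elements of $\mathfrak{h}$ again lies in $\mathfrak{h}$, so that the identity is a genuine relation among elements of $\mathfrak{h}$. Together these exhaust the definition, so $(\mathfrak{h},[\cdot,\cdot]|_{\mathfrak{h}})$ is a Lie algebra.

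The hard part, such as it is, is merely recognizing that the closure hypothesis must be applied iteratively when one interprets the Jacobi identity inside $\mathfrak{h}$: the inner brackets $[Y,Z]$, $[X,Y]$, $[X,Z]$ lie in $\mathfrak{h}$ by a single application of closure, and then the outer brackets lie in $\mathfrak{h}$ by a second application. This is an immediate consequence of using the single-step closure property twice, and there is otherwise no obstacle.
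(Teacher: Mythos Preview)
Your proof is correct and is the standard direct verification; the paper states this proposition without proof, treating it as an elementary fact, so there is nothing to compare against beyond noting that your argument is exactly the routine check one would expect.
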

Let $G$ be a Lie group. Then the space $\mathfrak{X}(G)$ of all vector fields on $G$ can be equipped with a canonical Lie bracket under which it forms a Lie algebra.
\begin{thm}[Lie bracket on $\mathfrak{X}(G)$]
The canonical choice of Lie bracket on $\mathfrak{X}(G)$ is the bilinear map $[\cdot, \cdot]: \mathfrak{X}(G) \times \mathfrak{X}(G) \rightarrow \mathfrak{X}(G)$ given by the representation 
$$ [X,Y](x) = X(x) \circ \mathcal{D}_Y - Y(x) \circ \mathcal{D}_X \quad \textrm{for all } x \in G \textrm{ and } X,Y \in \mathfrak{X}(G). $$
Equipped with this bracket, the pair $(\mathfrak{X}(G), [\cdot,\cdot])$ forms a Lie algebra. The maps $\mathcal{D}_X$ and $\mathcal{D}_Y$ are defined as in Appendix \ref{appendix:vectorfields}.
\end{thm}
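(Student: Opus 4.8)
The plan is to pass everything through the bijective correspondence $X \mapsto \mathcal{D}_X$ between vector fields on $G$ and derivations of the algebra $C^{\infty}(G)$ recalled in Appendix~\ref{appendix:vectorfields}. First I would evaluate the proposed bracket on a test function to see that, for every $f \in C^{\infty}(G)$,
\[
\mathcal{D}_{[X,Y]}f = \mathcal{D}_X(\mathcal{D}_Y f) - \mathcal{D}_Y(\mathcal{D}_X f) = [\mathcal{D}_X,\mathcal{D}_Y]f,
\]
so that the stated pointwise formula corresponds, under this dictionary, exactly to the operator commutator on $C^{\infty}(G)$. The proof then splits into two parts: (i) showing the commutator of two derivations of $C^{\infty}(G)$ is again a derivation, which is what makes $[X,Y]$ a genuine element of $\mathfrak{X}(G)$ (the germ condition and the smoothness of $[X,Y]f$ then come for free, since $[X,Y]f = \mathcal{D}_X(\mathcal{D}_Y f)-\mathcal{D}_Y(\mathcal{D}_X f)$ lies in $C^{\infty}(G)$ by two applications of Proposition~\ref{prop:mapXf}); and (ii) checking that the commutator bracket on the associative algebra of $\mathbb{R}$-linear endomorphisms of $C^{\infty}(G)$ restricts to a Lie bracket on the subspace of derivations, which then transports back to a Lie bracket on $\mathfrak{X}(G)$.

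For (i), writing $\mathcal{D} := \mathcal{D}_X$ and $\mathcal{E} := \mathcal{D}_Y$, linearity of $[\mathcal{D},\mathcal{E}]$ is immediate, and for the Leibniz rule I would apply the Leibniz rules for $\mathcal{D}$ and $\mathcal{E}$ twice:
\[
\mathcal{D}\mathcal{E}(fg) = (\mathcal{D}\mathcal{E}f)\,g + (\mathcal{E}f)(\mathcal{D}g) + (\mathcal{D}f)(\mathcal{E}g) + f\,(\mathcal{D}\mathcal{E}g),
\]
and symmetrically for $\mathcal{E}\mathcal{D}(fg)$; subtracting, the two mixed first-order terms cancel, leaving $[\mathcal{D},\mathcal{E}](fg) = ([\mathcal{D},\mathcal{E}]f)\,g + f\,([\mathcal{D},\mathcal{E}]g)$. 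Hence $[\mathcal{D},\mathcal{E}]$ is a derivation, so by the bijection it equals $\mathcal{D}_Z$ for a unique $Z \in \mathfrak{X}(G)$, and unwinding the bijection identifies $Z$ with the pointwise expression $x \mapsto X(x)\circ\mathcal{D}_Y - Y(x)\circ\mathcal{D}_X$, which we name $[X,Y]$.

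For (ii): bilinearity of $(X,Y)\mapsto[X,Y]$ follows from linearity of $X\mapsto\mathcal{D}_X$ and bilinearity of composition; antisymmetry $[X,Y]=-[Y,X]$ is immediate from $[\mathcal{D}_X,\mathcal{D}_Y]=-[\mathcal{D}_Y,\mathcal{D}_X]$; and the Jacobi identity, in the form $[X,[Y,Z]] = [[X,Y],Z] + [Y,[X,Z]]$ used in the Lie-algebra axioms, transports to
\[
[\mathcal{D}_X,[\mathcal{D}_Y,\mathcal{D}_Z]] = [[\mathcal{D}_X,\mathcal{D}_Y],\mathcal{D}_Z] + [\mathcal{D}_Y,[\mathcal{D}_X,\mathcal{D}_Z]],
\]
which is the standard Jacobi identity for commutators valid in any associative algebra, verified by expanding both sides and cancelling the coincident threefold compositions. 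Applying the inverse bijection then gives the Jacobi identity on $\mathfrak{X}(G)$, so $(\mathfrak{X}(G),[\cdot,\cdot])$ is a Lie algebra, completing the proof.

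I expect the only step with genuine content to be the cancellation in (i): the operator $X(x)\circ\mathcal{D}_Y$ by itself is a second-order differential operator and not a tangent vector, and it is precisely the antisymmetrization that annihilates the second-order part and returns a first-order object, i.e.\ an element of $T_xG$. Everything else is formal bookkeeping with commutators in an associative algebra together with the Appendix's vector-field/derivation dictionary.
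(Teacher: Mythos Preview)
Your proposal is correct and is the standard textbook argument for this result. Note, however, that the paper does not actually supply a proof of this theorem: it is stated in the appendix as background material without a \texttt{proof} environment, alongside references to \cite{Hall}, \cite{JohnLee}, \cite{Michor}, \cite{Samelson} and \cite{Warner}. So there is nothing in the paper to compare your argument against; your write-up is precisely the kind of verification one finds in those references, with the derivation--commutator dictionary doing all the work and the cancellation in step~(i) being the only substantive point, as you correctly identify.
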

Given a Lie group $(G, \cdot)$, we can now associate to it a canonical Lie algebra, namely, the Lie subalgebra of $(\mathfrak{X}(G), [\cdot, \cdot])$ consisting of all \emph{left-invariant} vector fields.
\begin{prop}[Left-invariant vector field]
Let $G$ be a Lie group, and let $X \in \mathfrak{X}(G)$ be a vector field on $G$. We say that the vector field $X$ is \emph{left-invariant} if
\begin{equation}
(L_a)_* X = TL_a \circ X \circ L_{a^{-1}} = X  \quad \textrm{for all } a \in G.
\end{equation}
We denote the set of left-invariant vector fields on $G$ by $\mathfrak{X}_L(G)$.
\end{prop}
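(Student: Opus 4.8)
The stated proposition is primarily a \emph{definition}: it introduces the notion of a left-invariant vector field and fixes the notation $\mathfrak{X}_L(G)$. The only assertion carrying mathematical content is the first equality in the display, namely that the pushforward $(L_a)_*X$ coincides with the explicit expression $TL_a \circ X \circ L_{a^{-1}}$; the second equality is simply the defining condition being imposed. My plan is therefore to justify this first equality by exhibiting $L_a$ as a smooth diffeomorphism and then invoking the explicit pushforward formula from Proposition \ref{prop:pushforward}.

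First I would verify that for each fixed $a \in G$ the left-translation map $L_a: G \to G$, $b \mapsto ab$, is a smooth diffeomorphism. Smoothness of $L_a$ follows immediately from the smoothness of the group multiplication $\cdot: G \times G \to G$, since $L_a$ is the composition of this multiplication with the smooth inclusion $b \mapsto (a,b)$. For invertibility, the group axioms give $L_a \circ L_{a^{-1}} = L_{a^{-1}} \circ L_a = \mathrm{id}_G$, because $a(a^{-1}b) = b$ and $a^{-1}(ab) = b$ for every $b \in G$; hence $L_a$ is a bijection with inverse $L_{a^{-1}}$, which is itself smooth by the same argument applied to $a^{-1}$.

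Having established that $L_a$ is a smooth diffeomorphism, I would apply Proposition \ref{prop:pushforward} with $\varphi = L_a$. The proposition guarantees that for every $X \in \mathfrak{X}(G)$ there is a unique vector field $(L_a)_*X$ that is $L_a$-related to $X$, and that it is given explicitly by $(L_a)_*X = TL_a \circ X \circ (L_a)^{-1}$. Substituting $(L_a)^{-1} = L_{a^{-1}}$ from the previous step yields exactly $(L_a)_*X = TL_a \circ X \circ L_{a^{-1}}$, which is the displayed equality. The left-invariance condition $(L_a)_*X = X$ for all $a \in G$ is then the content being defined, so nothing further is required.

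There is no genuine obstacle here: once $L_a$ is recognized as a diffeomorphism with inverse $L_{a^{-1}}$, the equality is an immediate specialization of the general pushforward formula. The only point deserving care is confirming that $(L_a)^{-1} = L_{a^{-1}}$ as \emph{maps}, which is precisely where the group axioms enter.
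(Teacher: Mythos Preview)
Your proposal is correct and matches the paper's treatment: the paper gives no formal proof of this proposition (it is really a definition), and the sentence immediately following it simply remarks that by Proposition~\ref{prop:pushforward} a vector field is left-invariant if and only if it is $L_a$-related to itself for every $a \in G$. Your justification of the first equality via the diffeomorphism property of $L_a$ and the explicit pushforward formula is exactly the intended reasoning.
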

By Proposition \ref{prop:pushforward} we see that a vector field is left-invariant if and only if it is $L_a$-related to itself for every $a \in G$. It can also easily be verified that the collection $\mathfrak{X}_L(G)$ of left-invariant vector fields on $G$ is a Lie subalgebra of $\mathfrak{X}(G)$. We shall refer to this Lie subalgebra as the \emph{Lie algebra generated by $G$}, and write $\textrm{Lie}(G) = \mathfrak{X}_L(G)$.
\begin{thm}
\label{thm:Liealgisomorph}
Let $G$ be a Lie group and denote by $\varepsilon: \textrm{Lie}(G) \rightarrow T_eG$ the evaluation map $X \mapsto X(e)$. Then $\varepsilon$ is a linear isomorphism between vector spaces, and we write
$$ \textrm{Lie}(G) \cong T_e G.$$
\end{thm}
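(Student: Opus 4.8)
The plan is to show that $\varepsilon$ is a linear bijection by producing an explicit inverse. Linearity is immediate, since the vector space structure on $\mathfrak{X}(G)$ is defined pointwise: $\varepsilon(\alpha X + \beta Y) = (\alpha X + \beta Y)(e) = \alpha X(e) + \beta Y(e)$. For injectivity I would take $X \in \textrm{Lie}(G)$ with $X(e) = 0$ and use left-invariance. By definition $(L_a)_* X = TL_a \circ X \circ L_{a^{-1}} = X$ for every $a \in G$; evaluating this identity at the point $a$, and using $L_{a^{-1}}(a) = e$ together with linearity of the tangent mapping (Proposition \ref{prop:propertiestangent}(i)), gives $X(a) = T_eL_a(X(e)) = T_eL_a(0) = 0$. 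Hence $X \equiv 0$ and $\varepsilon$ is injective.

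For surjectivity, given $v \in T_eG$ I would define a candidate vector field $X^v$ by $X^v(a) := T_eL_a(v) \in T_aG$ and verify the three required properties. First, evaluation at the identity: $X^v(e) = T_eL_e(v) = T_e\,\textrm{id}_G(v) = v$ by Proposition \ref{prop:propertiestangent}(iii), so $\varepsilon(X^v) = v$ once $X^v$ is shown to lie in $\textrm{Lie}(G)$. Second, left-invariance: since $L_b$ is a diffeomorphism, $(L_b)_*X^v$ is well defined by Proposition \ref{prop:pushforward}, and for $a, b \in G$ the chain rule for tangent mappings (Proposition \ref{prop:propertiestangent}(ii)) together with $L_b \circ L_{b^{-1}a} = L_{b(b^{-1}a)} = L_a$ yields
\[
\bigl((L_b)_* X^v\bigr)(a) = T_{b^{-1}a}L_b\bigl(X^v(b^{-1}a)\bigr) = T_{b^{-1}a}L_b\bigl(T_eL_{b^{-1}a}(v)\bigr) = T_e(L_b \circ L_{b^{-1}a})(v) = T_eL_a(v) = X^v(a),
\]
so $(L_b)_* X^v = X^v$ for all $b$.

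The third property, smoothness of $X^v$ as a map $G \to TG$, is the only genuinely non-formal point and is where I expect the real work to sit. The cleanest route is to factor $X^v$ through the tangent mapping of the (smooth) multiplication $\mu = {\cdot}: G \times G \to G$: identifying $T(G \times G) \cong TG \times TG$, the elementary "partial derivatives add up" formula for $T_{(a,e)}\mu$ gives $X^v(a) = T_{(a,e)}\mu(0_a, v)$, where $a \mapsto 0_a$ is the (smooth) zero section of $TG$. Since $\mu$ is smooth, $T\mu$ is smooth, and $X^v = T\mu \circ (a \mapsto (0_a, v))$ is a composition of smooth maps; moreover $\pi_G(X^v(a)) = a$, so $X^v \in \mathfrak{X}(G)$, and with the left-invariance just established, $X^v \in \textrm{Lie}(G)$. (If one prefers to avoid $T(G\times G)$, the same conclusion follows by checking in a chart that the component functions of $a \mapsto T_eL_a(v)$ are smooth, which again reduces to smoothness of $\mu$.) Combining the three steps, $\varepsilon$ is a linear bijection with $\varepsilon^{-1}(v) = X^v$, hence a linear isomorphism, which is precisely the claimed identification $\textrm{Lie}(G) \cong T_eG$.
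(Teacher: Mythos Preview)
The paper states this theorem without proof; it appears in the appendix as a standard background result, with the surrounding text citing \cite{Hall}, \cite{JohnLee}, \cite{Michor}, \cite{Samelson}, and \cite{Warner} for details. There is therefore no ``paper's own proof'' to compare against.

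That said, your argument is correct and is exactly the standard textbook proof (see e.g.\ Lee or Warner): linearity is pointwise, injectivity follows from $X(a)=T_eL_a(X(e))$ for left-invariant $X$, and surjectivity is obtained by pushing a tangent vector at $e$ around via $a\mapsto T_eL_a(v)$, with smoothness deduced from smoothness of the multiplication map. The factorisation $X^v=T\mu\circ(a\mapsto(0_a,v))$ is a clean way to handle the smoothness step and matches how the cited references do it.
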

From Theorem \ref{thm:Liealgisomorph} one immediately obtains a natural way of constructing a Lie algebra homomorphism from a Lie group homomorphism.
\begin{thm}[Induced Lie algebra homomorphism]
Let $\varphi: G \rightarrow H$ be a Lie group homomorphism between two Lie groups $G$ and $H$, and denote their Lie associated Lie algebras by $\mathfrak{g} = \textrm{Lie}(G)$ and $\mathfrak{h} = \textrm{Lie}(H)$, respectively. Then for every left-invariant vector field $X \in \mathfrak{g}$ there exists a unique $Y \in \mathfrak{h}$ such that $X$ and $Y$ are $\varphi$-related, and we write $\varphi_*X := Y$. In particular, the map $\varphi_*: \mathfrak{g} \rightarrow \mathfrak{h}$ obtained from this construction is a Lie algebra homomorphism.
\end{thm}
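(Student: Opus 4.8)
The plan is to follow the classical three-step route for this statement: construct the candidate $Y$ by transporting $X$ at the identity, verify that it is $\varphi$-related to $X$, and then check the two algebraic compatibilities (linearity and bracket-preservation). First I would fix the identities $e_G \in G$, $e_H \in H$ and the evaluation isomorphisms $\varepsilon_G \colon \textrm{Lie}(G) \to T_{e_G}G$, $\varepsilon_H \colon \textrm{Lie}(H) \to T_{e_H}H$ supplied by Theorem \ref{thm:Liealgisomorph}. Given $X \in \mathfrak{g}$, I would \emph{define} $Y = \varphi_*X$ to be the unique left-invariant vector field on $H$ with $Y(e_H) = T_{e_G}\varphi\bigl(X(e_G)\bigr)$, i.e.\ I simply set $\varphi_* := \varepsilon_H^{-1} \circ T_{e_G}\varphi \circ \varepsilon_G$; existence of such a $Y$ is exactly the surjectivity half of Theorem \ref{thm:Liealgisomorph}.

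Next I would check that $X$ and this $Y$ are $\varphi$-related. The key algebraic input is that a group homomorphism intertwines left-translations, $\varphi \circ L_a = L_{\varphi(a)} \circ \varphi$ for every $a \in G$. Applying the tangent functor to this identity and using Proposition \ref{prop:propertiestangent}(ii) together with left-invariance of $X$ and of $Y$, one obtains $T_a\varphi\bigl(X(a)\bigr) = T_a\varphi\bigl(T_{e_G}L_a(X(e_G))\bigr) = T_{e_H}L_{\varphi(a)}\bigl(T_{e_G}\varphi(X(e_G))\bigr) = T_{e_H}L_{\varphi(a)}\bigl(Y(e_H)\bigr) = Y(\varphi(a))$ for all $a$, hence $T\varphi \circ X = Y \circ \varphi$. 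Uniqueness of $Y$ inside $\mathfrak{h}$ is then immediate: any left-invariant $Y'$ which is $\varphi$-related to $X$ must satisfy $Y'(e_H) = T_{e_G}\varphi(X(e_G)) = Y(e_H)$ by evaluating the relation at $e_G$, so $Y' = Y$ by injectivity of $\varepsilon_H$.

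For the Lie-algebra-homomorphism claim, linearity of $\varphi_*$ is free from the formula $\varphi_* = \varepsilon_H^{-1} \circ T_{e_G}\varphi \circ \varepsilon_G$. For bracket-preservation I would invoke — and, since it is not stated in the excerpt, first prove — the naturality of the Lie bracket under $\varphi$-relatedness: if $X_i$ is $\varphi$-related to $Y_i$ for $i=1,2$, then $[X_1,X_2]$ is $\varphi$-related to $[Y_1,Y_2]$. The cleanest route is to re-express $\varphi$-relatedness in terms of derivations — write $X \sim_\varphi Y$; then $X \sim_\varphi Y$ iff $\mathcal{D}_X(f\circ\varphi) = (\mathcal{D}_Y f)\circ\varphi$ for all $f \in C^\infty(H)$, which one unwinds from the definition of the tangent mapping and of $\mathcal{D}$ — and then compute the commutator of derivations $\mathcal{D}_{[X_1,X_2]}(f\circ\varphi) = \mathcal{D}_{X_1}\bigl((\mathcal{D}_{Y_2}f)\circ\varphi\bigr) - \mathcal{D}_{X_2}\bigl((\mathcal{D}_{Y_1}f)\circ\varphi\bigr) = \bigl(\mathcal{D}_{[Y_1,Y_2]}f\bigr)\circ\varphi$. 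Applying this with $Y_i = \varphi_*X_i$ gives $[X_1,X_2] \sim_\varphi [\varphi_*X_1,\varphi_*X_2]$; since also $[X_1,X_2] \sim_\varphi \varphi_*[X_1,X_2]$ by construction and both targets are left-invariant, the uniqueness established above forces $\varphi_*[X_1,X_2] = [\varphi_*X_1,\varphi_*X_2]$.

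The only step I expect to be genuinely non-formal is the derivation characterization of $\varphi$-relatedness and the consequent naturality of the bracket; everything else — existence, uniqueness, linearity — is bookkeeping with the evaluation isomorphism $\varepsilon$ and the intertwining identity $\varphi \circ L_a = L_{\varphi(a)}\circ\varphi$.
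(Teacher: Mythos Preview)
The paper states this theorem in the appendix as background material without proof, so there is no ``paper's own proof'' to compare against. Your proposal is correct and follows the standard textbook route (define $\varphi_*$ via the evaluation isomorphism at the identity, verify $\varphi$-relatedness using $\varphi\circ L_a = L_{\varphi(a)}\circ\varphi$, then deduce bracket preservation from naturality of the bracket under $\varphi$-relatedness); this is exactly the argument one finds in the references the paper cites, e.g.\ \cite{JohnLee} or \cite{Warner}.
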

\subsection{The exponential map}
We start by introducing the notion of a \emph{one-parameter subgroup}.
\begin{defin}[One-parameter subgroup]
Let $G$ be a Lie group and note that the pair $(\mathbb{R}, +)$ naturally constitutes a Lie group as well. We will refer to any Lie group homomorphism $\gamma: \mathbb{R} \rightarrow G$ as a \emph{one-parameter subgroup of $G$}.
\end{defin}
\begin{thm}
Let $G$ be a Lie group with neutral element $e$. Then $\gamma: \mathbb{R} \rightarrow G$ is a one-parameter subgroup of $G$ if and only if there exists some left-invariant vector field $X \in \textrm{Lie}(G)$ such that $\gamma$ is the maximal integral curve of $X$ satisfying the initial condition $\gamma(0) = e$. In particular, we will say that $\gamma$ is the \emph{one-parameter subgroup generated by $X$}, and sometimes denote it by $\gamma_X$.
\end{thm}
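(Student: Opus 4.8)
The plan is to prove the two implications separately, in both directions exploiting the uniqueness of integral curves together with the fact that a left-translation carries integral curves of a left-invariant vector field to integral curves of the same vector field (immediate from the defining identity $TL_a \circ X = X \circ L_a$).

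For the implication ``$\Leftarrow$'', suppose $X \in \textrm{Lie}(G)$ and let $\gamma \colon J \to G$ be the maximal integral curve of $X$ with $\gamma(0) = e$. First I would show $J = \mathbb{R}$. Pick $\varepsilon > 0$ with $(-\varepsilon,\varepsilon) \subseteq J$; for any $t_0 \in J$ the curve $s \mapsto \gamma(t_0)\gamma(s) = L_{\gamma(t_0)}(\gamma(s))$ is an integral curve of $X$ on $(-\varepsilon,\varepsilon)$ with value $\gamma(t_0)$ at $s=0$, and $s \mapsto \gamma(t_0+s)$ is also an integral curve of $X$ with the same initial value, so by uniqueness they agree on their common domain; this exhibits an extension of $\gamma$ to an interval containing $(t_0-\varepsilon, t_0+\varepsilon)$. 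Since $\varepsilon$ does not depend on $t_0$, maximality of $\gamma$ forces $J = \mathbb{R}$. Next, for the homomorphism property, fix $s \in \mathbb{R}$ and compare the curves $t \mapsto \gamma(s+t)$ and $t \mapsto \gamma(s)\gamma(t) = L_{\gamma(s)}(\gamma(t))$: the first is an integral curve of $X$ by translation-invariance of the ODE, the second by left-invariance of $X$ as above, and both equal $\gamma(s)$ at $t=0$; uniqueness gives $\gamma(s+t) = \gamma(s)\gamma(t)$ for all $s,t$. As integral curves are smooth, $\gamma$ is a smooth group homomorphism $\mathbb{R} \to G$, i.e.\ a one-parameter subgroup.

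For the implication ``$\Rightarrow$'', suppose $\gamma \colon \mathbb{R} \to G$ is a Lie group homomorphism. Then $\gamma(0) = e$, and $\gamma'(0) \in T_eG$, so by Theorem \ref{thm:Liealgisomorph} there is a unique left-invariant vector field $X \in \textrm{Lie}(G)$ with $X(e) = \gamma'(0)$. It remains to verify $\gamma'(t) = X(\gamma(t))$ for all $t$. Using the equality of curves $\gamma(t+\cdot) = L_{\gamma(t)} \circ \gamma$ and the chain rule for tangent mappings $T(L_{\gamma(t)} \circ \gamma) = TL_{\gamma(t)} \circ T\gamma$ (Proposition \ref{prop:propertiestangent}(ii)), one computes $\gamma'(t) = T_eL_{\gamma(t)}\bigl(\gamma'(0)\bigr) = T_eL_{\gamma(t)}\bigl(X(e)\bigr) = X(\gamma(t))$, the last step by left-invariance of $X$. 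Hence $\gamma$ is an integral curve of $X$ through $e$ defined on all of $\mathbb{R}$, and therefore, since the maximal integral curve through $e$ is unique, it is precisely that maximal integral curve.

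The only genuine obstacle is the completeness step in the first implication, namely that the maximal integral curve of a left-invariant vector field cannot escape to the boundary of a finite interval: the crux is that left-invariance produces extensions of a fixed uniform length $\varepsilon$ at every point, which is what rules out a finite maximal interval. Everything else reduces to the local existence-and-uniqueness theorem for ODEs and routine manipulations of tangent mappings.
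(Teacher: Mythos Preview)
Your proof is correct and is the standard textbook argument (see e.g.\ Lee, \emph{Introduction to Smooth Manifolds}, Theorem~20.1, or Warner, \emph{Foundations of Differentiable Manifolds and Lie Groups}, Theorem~3.31). Note, however, that the paper does not actually supply its own proof of this theorem: it appears in the appendix as part of the background material on Lie groups, stated without proof and implicitly referred to the cited references \cite{Hall, JohnLee, Michor, Samelson, Warner}. So there is no ``paper's proof'' to compare against; your argument simply fills in what the authors took as known.
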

In particular, we obtain the bijective correspondences
\begin{align*} 
\begin{array}{ccccc}
\{\textrm{one-parameter subgroups of } G\} & \longleftrightarrow & \textrm{Lie}(G) & \longleftrightarrow & T_eG, \\
\gamma_X & \leftrightarrow & X & \leftrightarrow & X(e).
\end{array}
\end{align*}
\begin{defin}[Exponential map]
Let $G$ be a Lie group with Lie algebra $\mathfrak{g} = \textrm{Lie}(G)$. Then the exponential map $\exp: \mathfrak{g} \rightarrow G$ is defined by
$$ \exp(X) = \gamma_X(1) \quad \textrm{for all } X \in \mathfrak{g},$$
where $\gamma_X$ denotes the one-parameter subgroup generated by $G$.
\end{defin}
The exponential map provides us with a canonical way of expressing the one-parameter subgroup associated to a left-invariant vector field.
\begin{prop}
Let $G$ be a Lie group. Then for every left-invariant vector field $X \in \textrm{Lie}(G)$, the one-parameter subgroup generated by $X$ is the curve $\gamma_X: \mathbb{R} \rightarrow G$ defined by
$$ \gamma_X(s) = \exp(sX) \quad \textrm{for every } s \in \mathbb{R}.$$
\end{prop}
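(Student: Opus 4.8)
The plan is to reduce the statement to the uniqueness of (global) integral curves. Fix $s\in\mathbb{R}$ and define the reparametrized curve $\sigma\colon\mathbb{R}\to G$ by $\sigma(t)=\gamma_X(st)$. This is smooth and defined on all of $\mathbb{R}$, since $\gamma_X$, being a one-parameter subgroup, is defined on all of $\mathbb{R}$. First I would compute its velocity vector. Writing $\sigma=\gamma_X\circ m_s$ with $m_s(t)=st$, property (ii) of tangent mappings (Proposition~\ref{prop:propertiestangent}), the identity $T_t m_s\bigl(\frac{d}{dt}\big|_t\bigr)=s\,\frac{d}{dt}\big|_{st}$, and linearity of the tangent map give
\[
\sigma'(t)=T_t\sigma\left(\tfrac{d}{dt}\Big|_t\right)=s\,T_{st}\gamma_X\left(\tfrac{d}{dt}\Big|_{st}\right)=s\,\gamma_X'(st).
\]
Since $\gamma_X$ is an integral curve of $X$, we have $\gamma_X'(st)=X(\gamma_X(st))=X(\sigma(t))$, hence $\sigma'(t)=(sX)(\sigma(t))$ for all $t$, and moreover $\sigma(0)=\gamma_X(0)=e$.

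Next I would observe that $sX$ again lies in $\mathrm{Lie}(G)=\mathfrak{X}_L(G)$, because the set of left-invariant vector fields is a linear subspace of $\mathfrak{X}(G)$; consequently $sX$ generates its own one-parameter subgroup $\gamma_{sX}$, which by definition is the maximal integral curve of $sX$ with $\gamma_{sX}(0)=e$ and is defined on all of $\mathbb{R}$. By the previous paragraph, $\sigma$ is also a global integral curve of $sX$ through $e$, so the uniqueness clause of the existence–uniqueness theorem for integral curves (the case $J=\mathbb{R}$) forces $\sigma=\gamma_{sX}$.

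Finally, evaluating at $t=1$ and using the definition $\exp(Y)=\gamma_Y(1)$ for $Y\in\mathrm{Lie}(G)$ yields $\gamma_X(s)=\sigma(1)=\gamma_{sX}(1)=\exp(sX)$, which is the desired identity; the case $s=0$ is covered as well, both sides equalling $e$. The only point requiring genuine care is the chain-rule bookkeeping for velocity vectors in the first step; after that, the argument is a direct appeal to uniqueness of integral curves together with the fact that $\mathfrak{X}_L(G)$ is a vector space, so I do not anticipate any real obstacle.
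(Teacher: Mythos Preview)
Your argument is correct and is the standard textbook proof of this fact. The paper itself does not supply a proof of this proposition; it is stated in the appendix as a standard background result without justification, so there is nothing to compare against beyond noting that your reparametrization-plus-uniqueness argument is exactly the expected one.
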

We end this section by stating some useful properties of the exponential map.
\begin{prop}[Properties of the exponential map]
Let $G$ be a Lie group with Lie algebra $\mathfrak{g} = \textrm{Lie}(G)$. Then the exponential map $\exp: \mathfrak{g} \rightarrow G$ is smooth, and satisfies all of the following properties.
\begin{enumerate}
    \item For all $X \in \mathfrak{g}$: $\exp((s+t)X) = \exp(sX)\exp(tX)$ for all $s,t \in \mathbb{R}$-
    \item For all $X \in \mathfrak{g}$: $\exp(X)^{-1} = \exp(-X)$.
    \item For all $X \in \mathfrak{g}$: $\exp(X)^n = \exp(nX)$.
    \item The tangent mapping $T_0\exp: T_0\mathfrak{g} \rightarrow T_eG$ is the identity mapping.
    \item There exists an open neighbourhood $U \subseteq \mathfrak{g}$ with $0 \in U$ such that $\exp(U) \subseteq G$ is open with $e \in \exp(U)$, and
    $$ \exp|_U: U \rightarrow \exp(U) \quad \textrm{is a diffeomorphism}.$$
    \item If $\varphi: G \rightarrow H$ is a Lie group homomorphism into some other Lie group $H$ with Lie algebra $\mathfrak{h} = \textrm{Lie}(H)$, then it holds that
    $$ \exp \circ \varphi_* = \varphi \circ \exp \quad \textrm{as a map } \mathfrak{g} \rightarrow H,$$
    where $\varphi_*: \mathfrak{g} \rightarrow \mathfrak{h}$ denotes the Lie algebra homomorphism induced from $\varphi$.
\end{enumerate}
\end{prop}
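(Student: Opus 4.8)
The plan is to derive all six properties from the structural facts already recorded in the excerpt: that $\exp(X) = \gamma_X(1)$, that $\gamma_X(s) = \exp(sX)$, and that each $\gamma_X : \mathbb{R} \to G$ is a one-parameter subgroup, i.e.\ a Lie group homomorphism. The algebraic identities (1)--(3) then require no new input. Since $\gamma_X$ is a homomorphism, $\gamma_X(s+t) = \gamma_X(s)\gamma_X(t)$; rewriting each factor as $\gamma_X(u) = \exp(uX)$ gives property (1) at once. Taking $s = 1$, $t = -1$ in (1) and using $\exp(0) = \gamma_X(0) = e$ yields $\exp(X)\exp(-X) = e$, which is property (2). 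Property (3) follows by an immediate induction on $n$ from (1).

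For property (4) I would compute $T_0\exp$ by applying it to the velocity of a straight line. Fixing $X \in \mathfrak{g}$ and setting $c(t) = tX$, whose velocity at $0$ is $X$ under the canonical identification $T_0\mathfrak{g} \cong \mathfrak{g}$, the chain rule for tangent maps (Proposition~\ref{prop:propertiestangent}(ii)) gives $T_0\exp(X) = (\exp \circ c)'(0) = \frac{d}{dt}\big|_{t=0}\exp(tX)$. Since $\exp(tX) = \gamma_X(t)$ is an integral curve of $X$, its velocity at $0$ is $\gamma_X'(0) = X(\gamma_X(0)) = X(e)$, which is precisely the image of $X$ under the evaluation isomorphism $\varepsilon$ of Theorem~\ref{thm:Liealgisomorph}; hence $T_0\exp = \mathrm{id}$. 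Property (5) is then immediate: because $T_0\exp$ is the identity, and in particular invertible, the inverse function theorem furnishes an open neighbourhood $U$ of $0$ on which $\exp$ restricts to a diffeomorphism onto an open set $\exp(U)$ containing $\exp(0) = e$.

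Property (6) I would establish through uniqueness of integral curves. Given $X \in \mathfrak{g}$ and $Y := \varphi_* X$, the two are $\varphi$-related, i.e.\ $T\varphi \circ X = Y \circ \varphi$. I claim $\varphi \circ \gamma_X$ is the integral curve of $Y$ starting at $e_H$: indeed $(\varphi \circ \gamma_X)(0) = \varphi(e) = e_H$, and differentiating with the chain rule and the $\varphi$-relatedness gives $(\varphi \circ \gamma_X)'(t) = T\varphi\big(X(\gamma_X(t))\big) = Y\big((\varphi \circ \gamma_X)(t)\big)$. By uniqueness this forces $\varphi \circ \gamma_X = \gamma_Y$, and evaluating at $t = 1$ yields $\varphi(\exp X) = \exp(\varphi_* X)$, which is property (6).

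The one genuinely analytic point, and where I expect the real work, is the asserted smoothness of $\exp$: this does not follow from the algebra above and instead rests on the theorem that the flow of a smooth vector field depends smoothly on initial conditions \emph{and} on parameters. The plan is to encode the $X$-dependence as a single smooth vector field on the product $\mathfrak{g} \times G$ (equivalently, to view $(X,t) \mapsto \gamma_X(t)$ as its flow), invoke smooth parameter-dependence of solutions to conclude that $(X,t) \mapsto \gamma_X(t)$ is jointly smooth, and restrict to $t = 1$. Marshalling this ODE-dependence result cleanly is the main obstacle; the remaining five properties are then routine algebra and chain-rule computations building on it and on property (4).
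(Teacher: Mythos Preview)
The paper does not actually prove this proposition: it is stated in the appendix as a list of standard facts about the exponential map, with no proof given (the appendix serves as background material, citing \cite{Hall}, \cite{JohnLee}, \cite{Michor}, \cite{Samelson}, \cite{Warner}). So there is no ``paper's own proof'' to compare against.

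That said, your argument is correct and is precisely the standard textbook route. Properties (1)--(3) follow immediately from the homomorphism property of $\gamma_X$; property (4) via the chain rule and the integral-curve equation; property (5) from (4) and the inverse function theorem; property (6) from $\varphi$-relatedness plus uniqueness of integral curves; and smoothness from smooth dependence of ODE flows on parameters, packaged as a vector field on $\mathfrak{g}\times G$. One minor remark: in property (3) the statement as written presumably intends $n\in\mathbb{Z}$ (or at least $n\in\mathbb{N}$), and your induction together with (2) covers all integers; you might say this explicitly.
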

\end{document}